\declaretheorem[numberwithin=section]{theorem}
\declaretheorem[sibling=theorem]{proposition}
\declaretheorem[sibling=theorem]{corollary}
\declaretheorem[sibling=theorem]{lemma}
\declaretheorem[sibling=theorem,style=remark]{remark}
\numberwithin{equation}{section} 
\def\R{\mathbb R}
\def\C{\mathbb C}
\def\N{\mathbb N}
\def\1{\mathbbm 1}
\def\a{\alpha}
\def\H{\mathcal{H}}
\def\jj{\mx{j}}
\def\del{\partial}
\newcommand{\mx}[1]{\mathbf{#1}}
\begin{document}

\title{On Sharp Constants for Dual Segal--Bargmann $L^p$ Spaces}
\author{William E.\ Gryc \\
Department of Mathematics and Computer Science \\
Muhlenberg College \\
Allentown, PA 18104-5586 \\
\texttt{wgryc@muhlenberg.edu}
\and
Todd Kemp\thanks{Supported by NSF CAREER Award DMS-1254807} \\
Department of Mathematics \\
University of California, San Diego \\
La Jolla, CA 92093-0112 \\
\texttt{tkemp@math.ucsd.edu}
}

\date{\today}

\maketitle

\begin{abstract} We study dilated holomorphic $L^p$ space of Gaussian measures over $\C^n$, denoted $\mathcal{H}_{p,\alpha}^n$ with variance scaling parameter $\alpha>0$.  The duality relations $(\mathcal{H}_{p,\alpha}^n)^\ast \cong \mathcal{H}_{p',\alpha}$ hold with $\frac{1}{p}+\frac{1}{p'}=1$, but not isometrically.  We identify the sharp lower constant comparing the norms on $\mathcal{H}_{p',\alpha}$ and $(\mathcal{H}_{p,\alpha}^n)^\ast$, and provide upper and lower bounds on the sharp upper constant.  We prove several suggestive partial results on the sharpness of the upper constant.  One of these partial results leads to a sharp bound on each Taylor coefficient of a function in the Fock space for $n=1$.\end{abstract}

\tableofcontents

\section{Introduction}

This paper is concerned with the holomorphic $L^p$ spaces associated to Gaussian measures on $\C^n$.  In the case $p=2$, such spaces are often called Segal--Bargmann spaces \cite{Hall} or Fock spaces \cite{Zhu}.  They are core examples in the theory of holomorphic reproducing kernel Hilbert spaces, with connections to quantum field theory, stochastic analysis, and beyond.  The scaling of duality between these holomorphic $L^p$-spaces is still not fully understood; this paper presents some new sharp results, and new puzzles about these dual norms.

To fix notation, let $\alpha>0$, $n\in\N$, and let $\gamma^n_{\alpha}$ denote the following Gaussian probability measure on $\C^n$:
\[ \gamma^n_{\alpha}(dz) = \left(\frac{\alpha}{\pi}\right)^n e^{-\alpha|z|^2}\,\lambda^n(dz), \]
where $\lambda^n$ is the Lebesgue measure on $\C^n$.  The spaces considered in this paper are of the form $L^p_{hol}(\gamma^n_\alpha)$ for $1\le p<\infty$ and some $\alpha>0$, the subspaces of the full $L^p(\gamma^n_\alpha)$-spaces consisting of holomorphic functions.  These are Banach spaces in the usual $L^p$-norm.  However, as discovered by Sj\"ogren \cite{Sjogren} and proved as \cite[Proposition 1.5]{GrycKemp2011}, in this scaling, with $\frac{1}{p}+\frac{1}{p'}=1$ as usual, $L^p_{hol}(\gamma^n_\alpha)$ and $L^{p'}(\gamma^n_\alpha)$ are {\em not} dual to each other when $p\ne 2$.  It was shown by Janson, Peetre, and Rochberg \cite{JPR} that the correct scaling requires the parameter $\alpha$ to dilate with $p$.  That is, we define the {\em dilated holomorphic $L^p$ space} as
\begin{equation} \label{e.Hpa} \mathcal{H}_{p,\alpha}^n \equiv L^p_{hol}(\gamma^n_{\alpha p/2}) = \left\{f\in\mathrm{Hol}(\C^n)\colon \int \left|f(z)e^{-\alpha|z|^2/2}\right|^p\,\lambda^n(dz) < \infty\right\}, \end{equation}
with norm
\begin{equation} \label{e.Hpa.norm} \|h\|_{p,\alpha} = \|h\|_{\mathcal{H}^n_{p,\alpha}} \equiv \left(\int |h|^p \,d\gamma^n_{\alpha p/2}\right)^{1/p}. \end{equation}
Similarly, if $\Lambda\in (\mathcal{H}^n_{p,\alpha})^{\ast}$ is a bounded linear functional, denote its dual norm by
\begin{equation} \label{e.Hpa*.norm} \|\Lambda\|_{p,\alpha}^\ast = \|\Lambda\|_{(\mathcal{H}^n_{p,\alpha})^\ast} \equiv \sup_{g\in\mathcal{H}^n_{p,\alpha}\setminus\{0\}}\!\!\! \frac{ |\Lambda(g)| }{\;\;\;\|g\|_{p,\alpha} }. \end{equation}
(We de-emphasize the $n$-dependence of the norms $\|\cdot\|_{p,\alpha}$ and $\|\cdot\|_{p,\alpha}^\ast$; it will always be clear from context.)  It was shown in \cite{JPR} that $\mathcal{H}^n_{p,\alpha}$ and $\mathcal{H}^n_{p'\!,\alpha}$ {\em are} dual spaces for $1<p<\infty$.  One of the two main theorems of the present authors' paper \cite{GrycKemp2011} was the following estimate on the sharp constants of comparison for the dual norms.

\begin{theorem}[Theorem 1.2 in \cite{GrycKemp2011}] Let $1<p<\infty$, and $\frac{1}{p}+\frac{1}{p'}=1$.  Define the constant $C_p$ by
\begin{equation} \label{e.Cp} C_p \equiv 2\frac{1}{p^{1/p}}\frac{1}{{p'}^{1/p'}}. \end{equation}
Let $n\in\N$ and $\alpha>0$. Define $\langle f,g\rangle_\alpha = \int_{\C^n} f\overline{g}\,d\gamma^n_\alpha$.   Then for any $h\in\mathcal{H}^n_{p'\!,\alpha}$,
\begin{equation} \label{e.norm.equiv} \|h\|_{p'\!,\alpha} \le \| \langle \cdot,h\rangle_\alpha\|_{p,\alpha}^\ast \le C_p^n  \|h\|_{p'\!,\alpha}. \end{equation}
\end{theorem}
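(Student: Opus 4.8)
The plan is to treat the two inequalities separately: the upper bound follows cleanly from Hölder's inequality with the exact constant, while the lower bound is the substantive half.

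For the upper bound I would rewrite the pairing defining $\Lambda_h=\langle\cdot,h\rangle_\alpha$ against Lebesgue measure and split the Gaussian weight symmetrically,
\[ \langle g,h\rangle_\alpha=\left(\tfrac{\alpha}{\pi}\right)^n\int_{\C^n}\bigl(g\,e^{-\alpha|z|^2/2}\bigr)\overline{\bigl(h\,e^{-\alpha|z|^2/2}\bigr)}\,\lambda^n(dz), \]
and apply Hölder with exponents $p,p'$ to the two bracketed factors. This gives $|\langle g,h\rangle_\alpha|\le(\tfrac{\alpha}{\pi})^n(\int|g|^pe^{-\alpha p|z|^2/2}\lambda^n)^{1/p}(\int|h|^{p'}e^{-\alpha p'|z|^2/2}\lambda^n)^{1/p'}$. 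I would then identify each weighted Lebesgue integral with the corresponding $\mathcal{H}$-norm through the normalizing constant of $\gamma^n_{\alpha p/2}$ (resp.\ $\gamma^n_{\alpha p'/2}$), so that the bracketed factors become $(\tfrac{2\pi}{\alpha p})^{n/p}\|g\|_{p,\alpha}$ and $(\tfrac{2\pi}{\alpha p'})^{n/p'}\|h\|_{p',\alpha}$. Collecting constants and using $\tfrac1p+\tfrac1{p'}=1$, the prefactor $(\tfrac{\alpha}{\pi})^n(\tfrac{2\pi}{\alpha p})^{n/p}(\tfrac{2\pi}{\alpha p'})^{n/p'}$ collapses to exactly $2^np^{-n/p}(p')^{-n/p'}=C_p^n$, yielding $\|\Lambda_h\|^\ast_{p,\alpha}\le C_p^n\|h\|_{p',\alpha}$. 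The only structural remark is that equality in Hölder forces $|g|^pe^{-\alpha p|z|^2/2}\propto|h|^{p'}e^{-\alpha p'|z|^2/2}$ with aligned phases, which is unattainable within the holomorphic class when $p\ne2$; this is consistent with $C_p^n$ being non-sharp.

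For the lower bound I would pass to the ground-state transform $H:=h\,e^{-\alpha|z|^2/2}$, under which $\|h\|_{p',\alpha}=(\tfrac{\alpha p'}{2\pi})^{n/p'}\|H\|_{L^{p'}(\lambda^n)}$ and the functional becomes the honest Lebesgue pairing $\langle g,h\rangle_\alpha=(\tfrac{\alpha}{\pi})^n\int G\,\overline{H}\,\lambda^n$ with $G:=g\,e^{-\alpha|z|^2/2}$. The target inequality is then \emph{equivalent} to the statement that test functions $G$ drawn from the same weighted-holomorphic class as $H$ can recover at least a $C_p^{-n}$ fraction of the unconstrained $L^p$--$L^{p'}$ duality of $H$ (an arbitrary $G\in L^p$ would recover the full $\|H\|_{L^{p'}}$). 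My first step would be to record the exact reproducing-kernel test: for $g=K_w=e^{\alpha z\cdot\bar w}$ one has $\Lambda_h(K_w)=\overline{h(w)}$ and, after completing the square, $\|K_w\|_{p,\alpha}=e^{\alpha|w|^2/2}$ \emph{independent of} $p$, so that $\|\Lambda_h\|^\ast_{p,\alpha}\ge|H(w)|$ for every $w$. This already yields the sharp pointwise (Taylor-coefficient) bound, but only $L^\infty$-control of $H$, not $L^{p'}$-control.

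The main obstacle is upgrading these pointwise bounds to the full $L^{p'}$-norm. The natural route is to test against superpositions $g=\int\mu(w)K_w\,\lambda^n(dw)$, for which $\Lambda_h(g)=\int\mu(w)\overline{H(w)}\,e^{\alpha|w|^2/2}\lambda^n$ can be made to reproduce the $L^{p'}$-duality extremizer of $H$ by choosing $\mu$ appropriately; the difficulty is that $\|g\|_{p,\alpha}$ is \emph{not} controlled by the triangle inequality (which merely returns the $L^\infty$ bound) and must instead be estimated through genuine cancellation in the $p\ne2$ norm. I would therefore reduce, by a density and rotation-invariance argument, to monomials $h=z^k$, where the measures diagonalize: the test $g=z^k$ reduces the claim to $\|z^k\|_{2,\alpha}^2\ge\|z^k\|_{p,\alpha}\|z^k\|_{p',\alpha}$, i.e.\ to the Gamma-function inequality
\[ \Gamma\bigl(\tfrac{pk}{2}+1\bigr)^{1/p}\Gamma\bigl(\tfrac{p'k}{2}+1\bigr)^{1/p'}\le\bigl(\tfrac{pp'}{4}\bigr)^{k/2}\Gamma(k+1), \]
which holds with equality at $p=2$ and whose subleading behavior is governed precisely by the elementary fact $p^{1/p}(p')^{1/p'}\le2$ (equivalently $C_p\ge1$). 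Extending this from monomials to general $h$, where $\|\cdot\|_{p,\alpha}$ no longer splits across the orthogonal monomial directions, is exactly where the real analytic work lies, and is the step I expect to be hardest.
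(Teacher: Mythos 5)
Your treatment of the second inequality in \eqref{e.norm.equiv} is correct and is exactly the paper's argument: split the Gaussian weight symmetrically, apply H\"older with exponents $p,p'$, and the normalizing constants of $\gamma^n_{\alpha p/2}$ and $\gamma^n_{\alpha p'/2}$ collapse the prefactor to $C_p^n$. The genuine gap is in the first inequality, $\|h\|_{p'\!,\alpha}\le\|\langle\cdot,h\rangle_\alpha\|^\ast_{p,\alpha}$; you have located the difficulty honestly but supplied no mechanism that closes it. The kernel test gives only $\|\langle\cdot,h\rangle_\alpha\|^\ast_{p,\alpha}\ge\sup_w|h(w)|e^{-\alpha|w|^2/2}$, and by the pointwise bound \eqref{e.ptwise.bd} applied to $h$ this supremum is \emph{at most} $\|h\|_{p'\!,\alpha}$, so no refinement of kernel testing alone can produce the desired lower bound. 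The proposed rescue --- reduction ``by density and rotation-invariance'' to monomials --- fails at its key step: density does let you pass from \emph{polynomials} to general $h$ (both sides are $\|\cdot\|_{p'\!,\alpha}$-continuous in $h$, using the upper bound you already proved), but nothing lets you pass from \emph{monomials} to polynomials, since for $p'\ne 2$ neither $\|h\|_{p'\!,\alpha}$ nor the dual norm decomposes over the monomial expansion of $h$. Rotation-averaging only yields $\|\langle\cdot,h\rangle_\alpha\|^\ast_{p,\alpha}\ge\sup_k|a_k|\,\|z^k\|_{p'\!,\alpha}$ (the largest single monomial component), which is genuinely smaller than $\|h\|_{p'\!,\alpha}$. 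Moreover, even the monomial case rests on your unproved Gamma inequality, which runs \emph{opposite} to the log-convexity of $\Gamma$ (log-convexity gives $\Gamma(k+1)\le\Gamma(kp/2+1)^{1/p}\Gamma(kp'/2+1)^{1/p'}$); it is true, but within this paper it emerges only by combining Theorem \ref{t.1} with Theorem \ref{lemma3}, i.e.\ it is an output of the theory, not an available input.

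The missing idea is the sharp norm of the Segal--Bargmann projection, $\|P^n_\alpha\|_{p'\to p'}=C_p^n$ (Theorem 1.1 of \cite{GrycKemp2011}). The paper's derivation (Lemma \ref{l.2.2}, Lemma \ref{l.2.1}, Remark \ref{r.2.1}) proceeds by Hahn--Banach in the section-space picture to establish the identity $\|\langle\cdot,h\rangle_\alpha\|^\ast_{p,\alpha}=C_p^n\,\inf\{\|f\|_{p'\!,\alpha}\colon P^n_\alpha f=h\}$; since $P^n_\alpha f=h$ forces $\|h\|_{p'\!,\alpha}\le\|P^n_\alpha\|_{p'\to p'}\|f\|_{p'\!,\alpha}=C_p^n\|f\|_{p'\!,\alpha}$, the infimum is at least $\|h\|_{p'\!,\alpha}/C_p^n$, which is exactly the first inequality. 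Note that, granted this soft duality identity, the first inequality (for all $h$) is \emph{equivalent} to the estimate $\|P^n_\alpha\|_{p'\to p'}\le C_p^n$, so the sharp projection bound cannot be sidestepped: any complete proof must contain it or something equivalent --- for instance, testing against $g=P^n_\alpha G^n_{p'\!,\alpha}(h)$, the projected H\"older extremizer, works but again requires $\|P^n_\alpha\|_{p\to p}=C_p^n$ to control $\|g\|_{p,\alpha}$. Your proposal contains no substitute for this ingredient, and the step you flag as ``hardest'' is in fact the entire content of the lower bound.
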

Presently, we are interested in the sharpness of the inequalities in \eqref{e.norm.equiv}.  In fact, the first inequality is sharp, and this yields a new concise proof of a pointwise bound for the space $\mathcal{H}^n_{p,\alpha}$.

\begin{theorem} \label{t.1} Let $1<p<\infty$, $\frac{1}{p}+\frac{1}{p'}=1$, and $\alpha>0$.  Then
\[ \inf_{h\in\mathcal{H}^n_{p'\!,\alpha}\setminus\{0\}} \frac{  \| \langle \cdot,h\rangle_\alpha\|_{p,\alpha}^\ast}{ \|h\|_{p'\!,\alpha} } = 1. \]
It follows that, for any $z\in\C^n$, and any $g\in\mathcal{H}^n_{p,\alpha}$,
\begin{equation} \label{e.ptwise.bd} |g(z)| \le e^{\frac{\alpha}{2}|z|^2}\|g\|_{p,\alpha}. \end{equation}
\end{theorem}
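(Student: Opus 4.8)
The plan is to split the claimed equality into the two inequalities it encodes. The bound $\inf_{h}\|\langle\cdot,h\rangle_\alpha\|_{p,\alpha}^\ast/\|h\|_{p'\!,\alpha}\ge 1$ is nothing more than the first inequality of \eqref{e.norm.equiv}, so only the reverse bound needs work, and for that I would exhibit a single minimizer rather than a sequence. The natural candidate is the constant function $h=1\in\mathcal{H}^n_{p'\!,\alpha}$. First I would record the two elementary facts that $\|1\|_{p'\!,\alpha}=1$ (since each $\gamma^n_\beta$ is a probability measure) and that the induced functional is evaluation at the origin, $\langle g,1\rangle_\alpha=\int_{\C^n} g\,d\gamma^n_\alpha=g(0)$. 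With these in hand the whole first half reduces to computing the single dual norm $\|\langle\cdot,1\rangle_\alpha\|_{p,\alpha}^\ast=\sup_{g\ne 0}|g(0)|/\|g\|_{p,\alpha}$ and showing it equals $1$.

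The identity $\int g\,d\gamma^n_\beta=g(0)$ for holomorphic $g$ is the crux of the first half. I would deduce it from the rotation invariance of $\gamma^n_\beta$ under $\zeta\mapsto e^{i\theta}\zeta$: averaging over $\theta\in[0,2\pi)$ annihilates every nonconstant monomial in the power series of $g$ and leaves only the constant term $g(0)$, the term-by-term integration being justified by $L^p\subset L^1(\gamma^n_\beta)$. Applying this with $\beta=\alpha p/2$, then the triangle and Jensen inequalities for the probability measure $\gamma^n_{\alpha p/2}$, yields
\[ |g(0)| = \Big|\int g\,d\gamma^n_{\alpha p/2}\Big| \le \int |g|\,d\gamma^n_{\alpha p/2} \le \Big(\int |g|^p\,d\gamma^n_{\alpha p/2}\Big)^{1/p} = \|g\|_{p,\alpha}. \]
Equality holds for $g\equiv 1$, so $\|\langle\cdot,1\rangle_\alpha\|_{p,\alpha}^\ast=1$; the ratio at $h=1$ is exactly $1$, and combined with the lower bound from \eqref{e.norm.equiv} the infimum equals $1$.

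The displayed inequality above is already the pointwise bound \eqref{e.ptwise.bd} at $z=0$. To upgrade it to an arbitrary $z=w$ I would transport the estimate by a weighted translation that is an isometry of $\mathcal{H}^n_{p,\alpha}$. Define $(U_w g)(\zeta) = e^{-\alpha\,\zeta\cdot\bar w-\frac{\alpha}{2}|w|^2}\,g(\zeta+w)$, where $\zeta\cdot\bar w=\sum_j \zeta_j\overline{w_j}$; the exponential factor is entire in $\zeta$, so $U_w g$ is holomorphic. A change of variables $\zeta\mapsto\zeta+w$ together with the identity $-\tfrac{\alpha}{2}|\zeta+w|^2=-\tfrac{\alpha}{2}|\zeta|^2-\alpha\,\mathrm{Re}(\zeta\cdot\bar w)-\tfrac{\alpha}{2}|w|^2$ shows that the weight exactly absorbs the shift in the Gaussian density, so $\|U_w g\|_{p,\alpha}=\|g\|_{p,\alpha}$. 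Evaluating at $\zeta=0$ gives $(U_w g)(0)=e^{-\frac{\alpha}{2}|w|^2}g(w)$, and feeding $U_w g$ into the $z=0$ bound produces $e^{-\frac{\alpha}{2}|w|^2}|g(w)|\le\|g\|_{p,\alpha}$, which is exactly \eqref{e.ptwise.bd}.

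The main obstacles are bookkeeping rather than conceptual: verifying the mean value identity with a clean justification for the interchange, and checking that the exponent in $U_w$ is the unique holomorphic weight making the translation norm-preserving (the real-part computation above pins it down). I would also note that $U_w$ carries $1=K_0$ to a scalar multiple of the reproducing kernel $K_w(\zeta)=e^{\alpha\,\zeta\cdot\bar w}$, so the same argument simultaneously shows that the lower bound in \eqref{e.norm.equiv} is attained at every such kernel and that $\|K_w\|_{p'\!,\alpha}=e^{\frac{\alpha}{2}|w|^2}$, which makes the constant in \eqref{e.ptwise.bd} sharp.
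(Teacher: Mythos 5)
Your proof is correct, and it reaches the key statement by a genuinely different route than the paper. Both arguments share the same skeleton: the bound $\inf\ge 1$ is quoted from \eqref{e.norm.equiv}, the infimum is realized at the constant function $h\equiv 1$ (the paper's $h_0^\alpha$), and the pointwise bound \eqref{e.ptwise.bd} at general $z$ is obtained from the $z=0$ case by a weighted translation --- your $U_w$ is exactly the paper's map $g\mapsto g_z$ up to the constant factor $e^{-\frac{\alpha}{2}|z|^2}$, which you normalize so that $U_w$ is an honest isometry. The difference lies in how the dual norm $\|\langle\cdot,1\rangle_\alpha\|_{p,\alpha}^\ast=1$ is computed. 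The paper goes through Lemma \ref{l.2.1}: the dual norm equals $C_p^n\inf_{f\in P_\alpha^{-1}h}\|f\|_{p',\alpha}$, proven via Hahn--Banach and Lebesgue $L^p$-duality in the section spaces $\mathcal{S}^n_{p,\alpha}$ (Lemma \ref{l.2.2}), and then exhibits the explicit non-holomorphic preimage $f(z)=\mathrm{const}\cdot e^{-\alpha|z|^2/2}$ of $1$ that attains the infimum. You instead evaluate the dual norm directly and elementarily: $\langle g,1\rangle_\alpha=g(0)=\int g\,d\gamma^n_{\alpha p/2}$ by rotation invariance and the mean value property (this is \eqref{e.Pn.rk} at $z=0$, but you prove it from scratch), after which the triangle and Jensen inequalities for the probability measure $\gamma^n_{\alpha p/2}$ give $|g(0)|\le\|g\|_{p,\alpha}$, with equality at $g\equiv 1$. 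Your route is shorter and avoids functional analysis entirely; in effect you re-prove Bargmann's inequality rather than either citing it (as Lemma \ref{lem.t.1.1} would permit) or deriving it from duality (as the paper does), so your argument is just as self-contained as the paper's ``independent'' proof. What the paper's longer detour buys is Lemma \ref{l.2.1} itself, a structural identity valid for \emph{every} $h\in\mathcal{H}^n_{p',\alpha}$ (with attained infimum) that the paper reuses, e.g.\ to rederive \eqref{e.norm.equiv} in Remark \ref{r.2.1}, whereas your computation is special to the evaluation functionals. One cosmetic slip in your closing remark: $U_w$ carries $1$ to a scalar multiple of $K_{-w}$, not of $K_w$; this is harmless, since $\|K_{-w}\|_{p',\alpha}=\|K_w\|_{p',\alpha}$ by the rotation invariance of the Gaussian measure.
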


\begin{remark} The bound \eqref{e.ptwise.bd} is well-known; it can be found, for example, as \cite[Theorem 2.8]{Zhu}.  In fact, it is common to define a supremum norm on holomorphic functions $g$ as
$$\|g\|_{\infty,\alpha}=\sup_{z\in\C^n} g(z)e^{-\frac{\alpha}{2}|z|^2},$$
in which case \eqref{e.ptwise.bd} can be elegantly rewritten as $\|g\|_{\infty,\alpha}\leq\|g\|_{p,\alpha}$ for $1<p<\infty$.
\end{remark}

That the first inequality in \eqref{e.norm.equiv} should hold sharply is natural to expect from the method of proof given in \cite{GrycKemp2011}.  Indeed, it is instructive to write \eqref{e.norm.equiv} in the alternate form proven in our first paper.  Note that $\mathcal{H}^n_{2,\alpha}$ is a closed subspace of $L^2(\gamma^n_\alpha)$; let $P_\alpha^n\colon L^2(\gamma^n_\alpha)\to \mathcal{H}^n_{2,\alpha}$ denote the orthogonal projection. In fact, $P^n_\alpha$ is an integral operator that is bounded from $L^p(\gamma^n_{\alpha p/2})$ to $\mathcal{H}^n_{p,\alpha}$ for all $1<p<\infty$, as was originally shown in \cite{JPR}.  Denote by $\|P_\alpha^n\|_{p\to p}\equiv \|P_\alpha^n\colon L^p(\gamma^n_\alpha)\to \mathcal{H}^n_{p,\alpha}\|$.  In \cite[Lemma 1.18]{GrycKemp2011}, we proved that
\begin{equation} \label{e.alt.form} \frac{1}{\|P_\alpha^n\|_{p\to p} }\|h\|_{p'\!,\alpha} \le  \frac{1}{C_p^n}\| \langle \cdot,h\rangle_\alpha\|_{p,\alpha}^\ast \le  \|h\|_{p'\!,\alpha}.   \end{equation}
The $1/C_p^n$ in the middle term comes from the global geometry underlying these spaces.  Note from \eqref{e.Hpa} that $\mathcal{H}^n_{p,\alpha}$ can be though of as consisting of ``holomorphic sections'': functions $F$ of the form $F(z) = f(z)e^{-\alpha|z|^2/2}$ for some holomorphic $f$; the integrability condition for containment in $\mathcal{H}_{p,\alpha}^n$ is then simply that $F\in L^p(\C^n,\lambda^n)$.  The factor $1/C_p^n$ then arises from the constants relating the norms $\|\cdot\|_{p,\alpha}$ and $\|\cdot\|_{p'\!,\alpha}$ to the $L^p(\C^n,\lambda^n)$- and $L^{p'}(\C^n,\lambda^n)$-norms, yielding $p^{1/p}$ and ${p'}^{1/p'}$ factors from the normalization coefficients of the measures $\gamma^n_{\alpha p/2}$ and $\gamma^n_{\alpha p'/2}$.  The first inequality in \eqref{e.alt.form} then simplifies due to the first main theorem \cite[Theorem 1.1]{GrycKemp2011}, which states that $\|P^n_\alpha\|_{p\to p} = C_p^n$. The sharpness of the first inequality in \eqref{e.norm.equiv} is indicative of the fact that the orthogonal projection $P_\alpha^n$ controls the geometry of the spaces $\mathcal{H}^n_{p,\alpha}$.

In this context, the second inequality in \eqref{e.alt.form}, and hence in \eqref{e.norm.equiv}, is simply H\"older's inequality.  In the larger spaces $L^p(\C^n,\lambda^n)$ and $L^{p'}(\C^n,\lambda^n)$ where the section spaces $\mathcal{H}^n_{p,\alpha}$ live, H\"older's inequality is, of course, sharp: if $F\in L^p(\C^n,\lambda^n)$, then the function $G = |F|^{p-2}F$ is in $L^{p'}(\C^n,\lambda^n)$ and $\|G\|_{p'} = \|F\|_p^{p/p'}$, so that $\langle F,G\rangle = \|F\|_p^p = \|F\|_p \|G\|_{p'}$.  However, the function $G$ is typically not a holomorphic section, and so it is not a surprise that the same saturation argument fails in the spaces $\mathcal{H}^n_{p,\alpha}$.  In fact, we can say more.

\begin{theorem} \label{t.2} Let $1<p<\infty$, $p\neq 2$, $\frac{1}{p}+\frac{1}{p'}=1$, and $\alpha>0$.  If $g\in \mathcal{H}^n_{p,\alpha}$ and $h\in \mathcal{H}^n_{p'\!,\alpha}$ are non-zero, then
\begin{equation} \label{e.sharp.Holder} |\langle g,h\rangle_\alpha| < C_p^n \|g\|_{p,\alpha} \|h\|_{p'\!,\alpha}. \end{equation}
\end{theorem}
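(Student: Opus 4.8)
The plan is to prove the strict inequality by tracing exactly where the non-strict inequality \eqref{e.norm.equiv} comes from and showing that equality forces a contradiction when $p\neq 2$. Recall from the discussion following the statement of Theorem~\ref{t.2} that the upper bound in \eqref{e.norm.equiv} is, at bottom, H\"older's inequality applied to the holomorphic sections $G(z)=g(z)e^{-\alpha|z|^2/2}$ and $H(z)=h(z)e^{-\alpha|z|^2/2}$ in the Lebesgue spaces $L^p(\C^n,\lambda^n)$ and $L^{p'}(\C^n,\lambda^n)$. Concretely, writing $c_\beta=(\beta/\pi)^n$ for the normalization of $\gamma^n_\beta$ and unwinding the definitions \eqref{e.Hpa.norm}, one checks that the constant $C_p^n$ is precisely the product of the two normalization factors relating $\|g\|_{p,\alpha}$ and $\|h\|_{p'\!,\alpha}$ to $\|G\|_{L^p}$ and $\|H\|_{L^{p'}}$, so that the claimed bound \eqref{e.sharp.Holder} is equivalent to the assertion
\[ \left| \int_{\C^n} G\,\overline{H}\,\lambda^n(dz) \right| < \|G\|_{L^p(\lambda^n)}\,\|H\|_{L^{p'}(\lambda^n)}. \]
The first step, therefore, is to carry out this bookkeeping and reduce \eqref{e.sharp.Holder} to strict H\"older for $G$ and $\overline{H}$.

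Next I would invoke the equality case of H\"older's inequality. Since $G$ and $H$ are non-zero, equality in $|\int G\overline{H}| \le \|G\|_p\|H\|_{p'}$ holds if and only if two conditions are met: first, $|G|^p$ and $|H|^{p'}$ are proportional as functions (the norm-saturation condition), and second, the integrand $G\overline{H}$ has constant phase $\lambda^n$-almost everywhere (so that $|\int G\overline{H}| = \int |G\overline{H}|$). The plan is to assume equality holds and derive the first condition: there is a constant $k>0$ with $|G(z)|^p = k\,|H(z)|^{p'}$ for almost every $z$, equivalently
\[ |g(z)|^p\,e^{-\alpha p|z|^2/2} = k\,|h(z)|^{p'}\,e^{-\alpha p'|z|^2/2}. \]
Since $g$ and $h$ are holomorphic and not identically zero, their zero sets have measure zero, so this pointwise identity holds everywhere by continuity on the complement of a measure-zero set and extends to all of $\C^n$.

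The main step — and the crux of the argument — is to show that this proportionality forces $p=2$, which is the desired contradiction. The idea is to take logarithms on the region where $g,h\neq 0$:
\[ p\log|g(z)| - p'\log|h(z)| = \log k + \tfrac{\alpha}{2}(p-p')|z|^2. \]
The left-hand side is a (local) harmonic function away from the zeros of $g$ and $h$, since $\log|f|$ is harmonic wherever a holomorphic $f$ is non-vanishing. But $|z|^2$ is strictly subharmonic with $\Delta|z|^2$ a positive constant on $\C^n$. Applying the Laplacian forces $\alpha(p-p')/2\cdot\Delta|z|^2 = 0$, hence $p=p'$, i.e.\ $p=2$. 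The anticipated obstacle is handling the zeros of $g$ and $h$ cleanly: the harmonicity of $\log|g|$ and $\log|h|$ fails at their zero sets, so I expect to work on the open set where both are non-vanishing (which is dense and co-null) and argue that the Laplacian identity holds there, which already yields $p=2$. A clean alternative, avoiding distributional subtleties at the zeros altogether, is to restrict to a complex line or to exploit the maximum principle: the function $u = p\log|g|-p'\log|h|$ is the real part of the holomorphic function $\log(g^p/h^{p'})$ on any simply connected zero-free region, so $u$ is harmonic there, whereas the right-hand side $\tfrac{\alpha}{2}(p-p')|z|^2 + \log k$ is harmonic only when $p=p'$. Either route closes the argument; since $p\neq 2$ was assumed, equality in H\"older is impossible, and the inequality \eqref{e.sharp.Holder} is strict.
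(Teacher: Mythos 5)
Your proof is correct, and although it shares the paper's broad strategy---reduce \eqref{e.sharp.Holder} to the equality case of H\"older's inequality for the sections $G=g e^{-\frac{\alpha}{2}|\cdot|^2}$ and $H=h e^{-\frac{\alpha}{2}|\cdot|^2}$ in $L^p(\lambda^n)$ and $L^{p'}(\lambda^n)$, then contradict harmonicity---your way of closing the argument is genuinely different and leaner. The paper uses \emph{both} equality conditions: the constant-phase condition (to get $g\overline{h}\ge 0$ after a rotation, hence that $g/h$ is a positive holomorphic, therefore constant, function on a zero-free open set $U$) and the proportionality condition (to then solve for $|h|$ and conclude $|h|=c_1 e^{\frac{\alpha}{2}|z|^2}$ on $U$); it then restricts to a disk in one coordinate, extracts a holomorphic logarithm $\ell$, and reaches the contradiction $\Re\,\ell(\zeta)=\frac{\alpha}{2}|\zeta|^2$ with the non-harmonicity of $|\zeta|^2$. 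You use \emph{only} the proportionality condition: on the open dense set where $g$ and $h$ are both non-vanishing, taking logarithms gives $p\log|g|-p'\log|h|=\log k+\frac{\alpha}{2}(p-p')|z|^2$, and since the left side is pluriharmonic (hence harmonic) off the zero sets while $\Delta|z|^2=4n>0$, applying the Laplacian forces $p=p'$, contradicting $p\neq 2$. This bypasses the phase normalization, the constancy-of-$g/h$ step, and the reduction to one complex variable, and it makes transparent exactly where the hypothesis $p\neq 2$ enters (through the factor $p-p'$); it works in all dimensions $n$ at one stroke. What the paper's longer route buys in exchange is structural information about what an extremal pair would have to look like---$|h|$ forced to grow like $e^{\frac{\alpha}{2}|z|^2}$, i.e., like the reproducing kernel---which resonates with the sharpness discussion of Theorem \ref{t.1}; but as a proof of Theorem \ref{t.2}, your argument is complete and somewhat more economical.
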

Theorem \ref{t.2} asserts that H\"older's inequality is a {\em strict} inequality in the Segal-Bargmann spaces.  It is a priori possible that the inequality is nevertheless saturated by a sequence in $\mathcal{H}^n_{p,\alpha}\times \mathcal{H}^n_{p',\alpha}$, but we believe this is not the case.  Indeed, we conjecture that the second inequality in \eqref{e.norm.equiv} is not sharp.  To the question of the sharp constant, we prove the following.

\begin{theorem} \label{t.3} Let $1<p<\infty$, $\frac{1}{p}+\frac{1}{p'}=1$, $n\in\N$, and $\alpha>0$.  For $g\in\mathcal{H}^n_{p,\alpha}$ and $h\in\mathcal{H}^n_{p'\!,\alpha}$ nonzero, define
\[ \mathcal{R}_{p,\alpha}(g,h) = \frac{ |\langle g,h\rangle_\alpha | } { \|g\|_{p,\alpha} \|h\|_{p'\!,\alpha} }. \]
Then
\begin{equation} \label{e.sharp?} C_p^{n/2} \le \sup_{g\in \mathcal{H}^n_{p,\alpha}\setminus\{0\}}\; \sup_{h\in \mathcal{H}^n_{p'\!,\alpha}\setminus\{0\}} \mathcal{R}_{p,\alpha}(g,h) \le C_p^n. \end{equation}
\end{theorem}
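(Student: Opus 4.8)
The plan is to prove the two inequalities in \eqref{e.sharp?} separately, exploiting the tensor-product structure that already produced the $n$-th power constants $C_p^n$ in the earlier results.

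\textbf{The lower bound $C_p^{n/2}$.} First I would reduce to the case $n=1$ by a tensorization argument. The key observation is that the spaces $\mathcal{H}^n_{p,\alpha}$ factor as completed tensor products: the monomials $z^{k} = z_1^{k_1}\cdots z_n^{k_n}$ form an orthogonal basis, and both the norms $\|\cdot\|_{p,\alpha}$ and the pairing $\langle\cdot,\cdot\rangle_\alpha$ respect this product structure on product functions $g(z) = \prod_j g_j(z_j)$. Concretely, if $g = \bigotimes_{j} g_j$ and $h = \bigotimes_j h_j$ with each $g_j,h_j\in\mathcal{H}^1_{p,\alpha}$ (resp.\ $\mathcal{H}^1_{p',\alpha}$), then $\langle g,h\rangle_\alpha = \prod_j \langle g_j,h_j\rangle_\alpha$ and $\|g\|_{p,\alpha} = \prod_j \|g_j\|_{p,\alpha}$, so that $\mathcal{R}_{p,\alpha}(g,h) = \prod_j \mathcal{R}_{p,\alpha}(g_j,h_j)$. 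Thus the $n$-dimensional supremum dominates the $n$-th power of the one-dimensional supremum, and it suffices to exhibit, for $n=1$, a single pair $(g,h)$ with $\mathcal{R}_{p,\alpha}(g,h)\ge C_p^{1/2}$. For this I would try the simplest non-trivial test functions—the exponentials $e_w(z) = e^{\alpha\bar w z}$ (reproducing kernels) or low-degree monomials—and compute $\mathcal{R}_{p,\alpha}$ explicitly. The $p$-norm of a monomial $z^k$ involves a Gamma-function factor, and the ratio $\mathcal{R}$ reduces to an elementary expression in $p$ whose value I would verify meets or exceeds $C_p^{1/2}$.

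\textbf{The upper bound $C_p^n$.} This is immediate from Theorem~\ref{t.2} (or directly from the second inequality of \eqref{e.norm.equiv}): for any nonzero $g,h$ we have $|\langle g,h\rangle_\alpha| \le C_p^n\|g\|_{p,\alpha}\|h\|_{p',\alpha}$, so $\mathcal{R}_{p,\alpha}(g,h)\le C_p^n$, and taking the supremum preserves the bound. (The strictness in Theorem~\ref{t.2} does not upgrade the supremum bound, consistent with the fact that sharpness of the upper constant is left open.)

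\textbf{Main obstacle.} The genuine work is in the lower bound, specifically in finding a one-dimensional test pair whose ratio is provably at least $C_p^{1/2}$. The exponent $n/2$ rather than $n$ suggests that the natural candidate achieves only \emph{half} the logarithmic rate of the conjectured optimum, so I expect the right approach is to optimize $\mathcal{R}_{p,\alpha}(g,h)$ over a one-parameter family and show the best value equals exactly $C_p^{1/2}$. The difficulty is twofold: first, identifying which family saturates $C_p^{1/2}$ (I would guess the reproducing-kernel/coherent states $e_w$, whose norms are computable via the reproducing property and a Gaussian integral); second, carrying out the norm computations cleanly, since $\|e_w\|_{p,\alpha}^p$ requires integrating $|e^{\alpha\bar w z}|^p e^{-\alpha p|z|^2/2}$, a Gaussian integral in $z$ that produces an exponential in $|w|^2$ with a $p$-dependent coefficient. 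Balancing these coefficients against the pairing $\langle e_w, e_w\rangle_\alpha$ and the normalizing constants of $\gamma^1_{\alpha p/2}$ and $\gamma^1_{\alpha p'/2}$ is where the factor $C_p^{1/2} = (2\,p^{-1/p}{p'}^{-1/p'})^{1/2}$ should emerge, and verifying this algebraic identity is the crux.
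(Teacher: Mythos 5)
Your upper bound argument and your tensorization reduction to $n=1$ are both correct and essentially identical to the paper's proof (the paper tensorizes with monomials, cf.\ \eqref{e.tensor.g}). The genuine gap is at the core of the lower bound: your plan is to exhibit a \emph{single} one-dimensional pair $(g,h)$ with $\mathcal{R}_{p,\alpha}(g,h)\ge C_p^{1/2}$, and both of your proposed candidates provably fail at this. The reproducing kernels $e_w(z)=e^{\alpha \bar{w} z}$ give ratio exactly $1$: by the reproducing property \eqref{e.Pn.rk} one has $\langle e_w,e_w\rangle_\alpha = e^{\alpha|w|^2}$, while $\|e_w\|_{p,\alpha}=e^{\alpha|w|^2/2}$ for \emph{every} $p$ (this is the computation in the proof of Theorem \ref{t.1}), so $\mathcal{R}_{p,\alpha}(e_w,e_w)=1<C_p^{1/2}$ for $p\ne 2$. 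These functions saturate the \emph{lower} constant in \eqref{e.norm.equiv}, not the upper one. As for monomials, the paper shows, via the exact Stirling representation $\Gamma(z)=\sqrt{2\pi}\,z^{z-1/2}e^{S(z)-z}$ with $S$ as in \eqref{e.newS}, that
\[ \mathcal{R}_{p,\alpha}(z^k,z^k) = C_p^{1/2}\, e^{S(k)-\frac1p S(kp/2)-\frac1{p'}S(kp'/2)}, \]
and the exponent is \emph{strictly negative} for every finite $k$ when $p\ne2$ (by strict convexity of $x\mapsto s(t,x)$, Proposition \ref{p.Stirling}). So no single monomial, of low or high degree, reaches $C_p^{1/2}$ either; indeed Theorem \ref{lemma3} later shows no pair with one monomial entry can.

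The correct statement is asymptotic: the exponent above tends to $0$ as $k\to\infty$, so the sequence of pairs $(z^k,z^k)$ saturates the bound only in the limit, and the supremum $C_p^{1/2}$ is approached but never attained within this family. Consequently the lower bound cannot be established by the single algebraic identity you are hoping for; it requires Gamma-function asymptotics to show simultaneously that the ratio stays below $C_p^{1/2}$ and that it converges to it as the degree grows. Your closing remark about optimizing over a one-parameter family is the right instinct, but the ``best value'' of that family is a supremum achieved only as $k\to\infty$, which is exactly why the paper needs the Stirling-type Proposition \ref{p.Stirling} (dominated convergence giving $S(x)\to0$) rather than a finite computation.
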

\noindent We can exhibit sequences in $\mathcal{H}^n_{p,\alpha}\times\mathcal{H}^n_{p'\!,\alpha}$ that saturate \eqref{e.sharp.Holder} with $C_p^{n/2}$ in place of $C_p^n$, as will be demonstrated in the proof of Theorem \ref{t.3} (cf.\ \eqref{e.sharp?2}); indeed, such a saturating sequence can be built from monomials.  If the same bound could be shown to hold not only for monomials but all {\em holomorphic polynomials}, this would prove the sharpness of the $C_p^{n/2}$-bound in general (since holomorphic polynomials are dense in $\mathcal{H}^n_{p,\alpha}$).  This conjecture seems to be quite difficult to prove.  The final major results of this paper are two partial results in this direction, summarized as follows.

\begin{theorem} \label{t.4.new} Let $1<p<\infty$, $n\in\N$, and $\alpha>0$.  Let $\mathcal{M}^n$ denote the space of holomorphic monomials on $\C^n$.  Then
\[ \sup_{g\in\H^n_{p,\a}\setminus\{0\}}\sup_{h\in\mathcal{M}^n} \mathcal{R}_{p,\alpha}(g,h) =C_p^{n/2}. \]
\end{theorem}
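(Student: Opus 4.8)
The plan is to collapse the double supremum onto the ``diagonal'' of monomials, reducing it to a one--variable extremal problem, and then to a single sharp inequality for the Gamma function. \emph{First}, I would fix a monomial $h=z^k$ with $k\in\N^n$ and evaluate the inner supremum over $g$. Since the monomials are orthogonal in $\H^n_{2,\a}$ with $\langle z^j,z^k\rangle_\a=\delta_{jk}\,k!/\a^{|k|}$, the functional $\langle\cdot,h\rangle_\a$ extracts the Taylor coefficient $c_k$ of $g=\sum_j c_j z^j$, so that
\[ \sup_{g\in\H^n_{p,\a}\setminus\{0\}}\mathcal{R}_{p,\alpha}(g,z^k)=\frac{k!/\a^{|k|}}{\|z^k\|_{p'\!,\a}}\;\sup_{g\ne0}\frac{|c_k(g)|}{\|g\|_{p,\a}}. \]
To compute the remaining supremum I would use the rotation invariance of $\gamma^n_{\a p/2}$: for $\theta\in[0,2\pi)^n$ the map $R_\theta g(z)=g(e^{i\theta_1}z_1,\dots,e^{i\theta_n}z_n)$ is an isometry of $\H^n_{p,\a}$, and averaging $e^{-ik\cdot\theta}R_\theta g$ over $\theta$ projects $g$ onto its $z^k$--component $c_k z^k$. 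Minkowski's integral inequality then gives $|c_k(g)|\,\|z^k\|_{p,\a}\le\|g\|_{p,\a}$, with equality at $g=z^k$. This is exactly the sharp Taylor--coefficient bound promised in the abstract, and it shows that the optimal competitor against a monomial is that same monomial:
\[ \sup_{g\in\H^n_{p,\a}\setminus\{0\}}\mathcal{R}_{p,\alpha}(g,z^k)=\mathcal{R}_{p,\alpha}(z^k,z^k). \]

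\emph{Second}, since $\gamma^n_{\a p/2}$, the monomials, and the pairing all factor over the $n$ coordinates, $\mathcal{R}_{p,\alpha}(z^k,z^k)=\prod_{l=1}^n\rho_{k_l}$, where $\rho_m\equiv\mathcal{R}_{p,\alpha}(z^m,z^m)$ is the one--variable ratio on $\C$. Hence the full double supremum equals $(\sup_{m\ge0}\rho_m)^n$, and everything reduces to proving $\sup_{m}\rho_m=C_p^{1/2}$. A direct Gaussian computation, using $\langle z^m,z^m\rangle_\a=m!/\a^m$ and $\|z^m\|_{q,\a}^q=\Gamma(qm/2+1)/(\a q/2)^{qm/2}$, yields
\[ \rho_m=\frac{\Gamma(m+1)}{\Gamma(pm/2+1)^{1/p}\,\Gamma(p'm/2+1)^{1/p'}}\Big(\frac{pp'}{4}\Big)^{m/2}. \]

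\emph{Third}, the limit is routine: Stirling's formula makes all of the $m\log m$, $\log m$, $\log\pi$, and linear terms cancel, leaving $\lim_{m\to\infty}\rho_m=C_p^{1/2}$, which already gives $\sup_m\rho_m\ge C_p^{1/2}$. The reverse bound, $\rho_m\le C_p^{1/2}$ for every finite $m$, is the crux, and I expect it to be the main obstacle. It is not a soft estimate: a single application of Hölder's inequality to $\langle z^m,z^m\rangle_\a$ yields only the weaker bound $\rho_m\le C_p$ (essentially the content of Theorem \ref{t.2}), because the Hölder defect is invisible at first order. To recover the missing factor I would instead show that the sequence $\rho_m$ is monotonically increasing, so that $\rho_m\le\lim_j\rho_j=C_p^{1/2}$. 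Extending $\rho_m$ to real $m$ and differentiating $\log\rho_m$, monotonicity is equivalent to the digamma inequality
\[ \psi(m+1)+\tfrac12\log\tfrac{pp'}{4}\;\ge\;\tfrac12\,\psi(pm/2+1)+\tfrac12\,\psi(p'm/2+1), \]
which I would attack via the integral representation of $\psi$ together with $pp'\ge4$ and $\tfrac1p+\tfrac1{p'}=1$.

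\emph{Finally}, the reason this last step is delicate is that the two sides of the digamma inequality agree to leading order as $m\to\infty$, so the estimate is tight and must be carried out carefully rather than bounded crudely. Equivalently --- and this is the cleanest way to see the difficulty --- writing $\varphi_m(z)=|z|^m e^{-\a|z|^2/2}$ one has $\rho_m=C_p\,\|\varphi_m\|_2^2/(\|\varphi_m\|_p\|\varphi_m\|_{p'})$ in $L^q(\C,\lambda)$, and the trivial interpolation (log--convexity of $L^q$ norms) gives only $\|\varphi_m\|_2^2\le\|\varphi_m\|_p\|\varphi_m\|_{p'}$, i.e.\ $\rho_m\le C_p$. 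The claim $\rho_m\le C_p^{1/2}$ asserts that the interpolation defect of the specific bump $\varphi_m$ never exceeds $C_p^{-1/2}$; this square--root improvement over the generic interpolation bound is precisely the hard point.
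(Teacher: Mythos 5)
Your first step is correct and genuinely different from the paper's. The paper proves that the inner supremum is attained at $g=z^k$ (Theorem~\ref{lemma3}) by introducing the nonlinear H\"older extremizer $G^n_{p,\alpha}(h)=|h|^{p-2}h\,e^{-\alpha(\frac{p}{2}-1)|z|^2}$ and showing that $P^n_\alpha G^n_{p,\alpha}$ fixes each monomial up to a constant (Lemmas~\ref{lemma1} and~\ref{lemma2}); your rotation-averaging/Minkowski argument reaches the same conclusion, and the sharp coefficient bound of Corollary~\ref{cor53}, more elementarily. Your tensorization step and your formula for $\rho_m$ agree with \eqref{e.tensor.g} and \eqref{e.Rp1}. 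However, there is a genuine gap at precisely the step you label the crux: the inequality $\rho_m\le C_p^{1/2}$ for every finite $m$ --- the content of the paper's Proposition~\ref{p.Stirling} --- is never proved. What you do establish (the Stirling limit) gives only $\sup_m\rho_m\ge C_p^{1/2}$, i.e.\ the lower bound $C_p^{n/2}$ for the double supremum; the upper bound, which is the substantive half of the theorem, is left resting on an unproven digamma inequality.

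Moreover, your proposed attack on that inequality will not go through as stated, because it is structurally resistant to the tools you name. Using $\psi(x+1)=\psi(x)+\frac1x$ (the $\frac1x$ terms cancel since $\frac1p+\frac1{p'}=1$) and the representation $\psi(x)=\log x-\frac{1}{2x}-\phi(x)$ with $\phi(x)=\int_0^\infty\frac{2t\,dt}{(t^2+x^2)(e^{2\pi t}-1)}$, your inequality is equivalent to
\begin{equation*}
\phi(m)\le\tfrac12\phi(mp/2)+\tfrac12\phi(mp'/2).
\end{equation*}
The trouble is a weight mismatch: $m=\frac1p\cdot\frac{mp}{2}+\frac1{p'}\cdot\frac{mp'}{2}$ is a convex combination with weights $(\frac1p,\frac1{p'})$, while the right-hand side averages with weights $(\frac12,\frac12)$. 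Even granting convexity of $\phi$, one gets only $\phi(m)\le\frac1p\phi(mp/2)+\frac1{p'}\phi(mp'/2)$, which, since $\phi$ is decreasing, strictly exceeds $\frac12\phi(mp/2)+\frac12\phi(mp'/2)$ when $p\ne2$; worse, the desired inequality fails pointwise in $t$ for large $t$ (with $u=\frac{mp}{2}$, $v=\frac{mp'}{2}$, the cross-multiplied integrand comparison has $t^2$-coefficient $m^2-\frac{u^2+v^2}{2}\le m^2-uv\le0$), so no termwise argument in that representation can work and the exponential decay of the weight must be used in an essential way. The paper's resolution --- the idea you are missing --- is to run the convexity at the level of $\log\Gamma$ rather than $\psi$, where the weights match: writing $\Gamma(z)=\sqrt{2\pi}\,z^{z-1/2}e^{S(z)-z}$ with $S(z)=\int_0^\infty\frac{2\arctan(t/z)}{e^{2\pi t}-1}\,dt$ as in \eqref{e.newS}, one finds $\rho_m=C_p^{1/2}\exp\left(S(m)-\frac1pS(mp/2)-\frac1{p'}S(mp'/2)\right)$, and now the coefficients $(\frac1p,\frac1{p'})$ in the exponent coincide with the weights of the convex combination; the strict convexity of $x\mapsto\arctan(t/x)$ then yields $\rho_m<C_p^{1/2}$ for every $m$ directly (Proposition~\ref{p.Stirling}), with no monotonicity claim at all --- which is just as well, since monotonicity of $\rho_m$ is strictly stronger than what the theorem requires.
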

\noindent That is: restricting one variable of $\mathcal{R}_{p,\alpha}$ to run through monomials, but letting the other run freely over $\H^n_{p,\alpha}$ yields the conjectured global maximum.

Theorem \ref{t.4.new} is proved as a corollary to Theorem \ref{lemma3} below, which has its own independently interesting corollaries.  In particular, we have the following result:

\begin{corollary}\label{t.4.5.new}
Let $1<p<\infty$ and $\a>0$. Let $f\in\mathcal{H}^1_{p',\a}$ and with the Taylor series $\sum_{k=0}^\infty a_k z^k$. Then for any $j\in\N\cup\{0\}$, we have
\begin{equation}
\|a_j z^j\|_{p,\alpha}\leq \left\|\sum_{k=0}^\infty a_k z^k\right\|_{p,\alpha}=\|f\|_{p,\alpha}\nonumber,
\end{equation}
and
\begin{equation}\label{t.4.5.new.eq1}
|a_j|\leq \frac{\left(\frac{\alpha p}{2}\right)^{j/2}}{\Gamma\left(\frac{j p}{2}+1\right)^{1/p}}\|f\|_{p,\alpha}.
\end{equation}
In either inequality above, we have equality if and only if $f(z)$ is a constant multiple of $z^j$.
\end{corollary}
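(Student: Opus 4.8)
The plan is to reduce both displayed inequalities to the single norm comparison $\|a_jz^j\|_{p,\alpha}\le\|f\|_{p,\alpha}$ and then prove that comparison by exploiting the rotational symmetry of the Gaussian weight $\gamma^1_{\alpha p/2}$. First I would record the exact monomial norm: a computation in polar coordinates, resting on the Gamma integral
\[ \int_0^\infty r^{jp+1}e^{-\frac{\alpha p}{2}r^2}\,dr=\frac{1}{\alpha p}\left(\frac{2}{\alpha p}\right)^{jp/2}\Gamma\!\left(\frac{jp}{2}+1\right), \]
yields $\|z^j\|_{p,\alpha}=\left(\frac{2}{\alpha p}\right)^{j/2}\Gamma\!\left(\tfrac{jp}{2}+1\right)^{1/p}$. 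Since $\|a_jz^j\|_{p,\alpha}=|a_j|\,\|z^j\|_{p,\alpha}$ and $(\alpha p/2)^{j/2}/\Gamma(jp/2+1)^{1/p}=1/\|z^j\|_{p,\alpha}$, inequality \eqref{t.4.5.new.eq1} is precisely the first inequality divided through by $\|z^j\|_{p,\alpha}$, and the two equality conditions coincide. Thus everything reduces to proving $\|a_jz^j\|_{p,\alpha}\le\|f\|_{p,\alpha}$ for any $f$ with $\|f\|_{p,\alpha}<\infty$.

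The main step is a rotational averaging argument. I would extract the $j$th Taylor term by
\[ a_jz^j=\frac{1}{2\pi}\int_0^{2\pi}e^{-ij\theta}f(e^{i\theta}z)\,d\theta, \]
which follows from term-by-term integration of the locally uniformly convergent Taylor series. Regarding the integrand as an $L^p(\gamma^1_{\alpha p/2})$-valued function of $\theta$, I would apply Minkowski's integral inequality together with the fact that $z\mapsto e^{i\theta}z$ preserves the radial measure $\gamma^1_{\alpha p/2}$, so that $\|e^{-ij\theta}f(e^{i\theta}\cdot)\|_{p,\alpha}=\|f\|_{p,\alpha}$ for every $\theta$. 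This gives
\[ \|a_jz^j\|_{p,\alpha}\le\frac{1}{2\pi}\int_0^{2\pi}\|f(e^{i\theta}\cdot)\|_{p,\alpha}\,d\theta=\|f\|_{p,\alpha}. \]

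For the equality statement I would invoke strict convexity of $L^p$ for $1<p<\infty$. Since every vector $e^{-ij\theta}f(e^{i\theta}\cdot)$ has the common norm $\|f\|_{p,\alpha}$, equality in the averaged triangle inequality forces the integrand to be independent of $\theta$, i.e.\ $e^{-ij\theta}f(e^{i\theta}z)=a_jz^j$ for a.e.\ $\theta$. Fixing one admissible $\theta_0$ and using holomorphicity to promote the a.e.\ identity in $z$ to an identity everywhere, the substitution $w=e^{i\theta_0}z$ yields $f(w)=a_jw^j$; the converse is immediate.

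The one delicate point, and the step I expect to be the main obstacle, is this equality analysis: I must convert strict convexity into the rigidity statement that a unit vector equal to a continuous (probability) average of unit vectors must equal each of them, i.e.\ that points of the unit sphere are extreme and admit no nontrivial averaging representation. This route is self-contained and, notably, does not invoke Theorem \ref{lemma3}.
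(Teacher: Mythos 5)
Your proof is correct, and it takes a genuinely different route from the paper's. The paper derives this corollary from Theorem \ref{lemma3} (equivalently Corollary \ref{cor53}): it constructs the H\"older extremizer $G^1_{p,\alpha}(h)$, uses the self-adjointness and reproducing property of $P^1_\alpha$ to show that $f\mapsto|\langle f,P^1_\alpha G^1_{p,\alpha}(h)\rangle_\alpha|/\|f\|_{p,\alpha}$ is maximized precisely at scalar multiples of $h$ (Lemma \ref{lemma1}), computes that $P^1_\alpha G^1_{p,\alpha}(\psi_{j,\alpha})$ is again a multiple of $z^j$ (Lemma \ref{lemma2}), and finally identifies $\langle f,\psi_{j,\alpha}\rangle_\alpha\,\psi_{j,\alpha}=a_jz^j$ using the $L^p$-norm convergence of the Taylor series (Lemma \ref{lemma0}). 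You bypass all of this machinery: your inputs are only the Fourier-coefficient identity $a_jz^j=\frac{1}{2\pi}\int_0^{2\pi}e^{-ij\theta}f(e^{i\theta}z)\,d\theta$, Minkowski's integral inequality, and the rotation invariance of $\gamma^1_{\alpha p/2}$. This is more elementary and strictly more general: nothing Gaussian is used, so the same argument gives the sharp coefficient bound for any radial weight on $\C$, and averaging over the $n$-torus would recover the $n$-dimensional Corollary \ref{cor53} as well. What your route does not produce are the intermediate results themselves (Lemmas \ref{lemma1} and \ref{lemma2}), which the paper reuses in its sharp-constant program in Section \ref{s.local}; conversely, the paper's equality analysis is inherited from the equality case of H\"older's inequality, whereas yours comes from convexity.

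The step you flag as the main obstacle is indeed the only point needing care, but it is a standard rigidity fact and it does hold. Since polynomials are dense in $L^p(\gamma^1_{\alpha p/2})$ and rotations act isometrically, the map $\theta\mapsto v_\theta:=e^{-ij\theta}f(e^{i\theta}\cdot)$ is norm-continuous, so $v:=a_jz^j$ is the Bochner average of $\{v_\theta\}$ (and this Bochner integral agrees with the pointwise one by testing against $L^{p'}$ and Fubini). If $\|v\|=\|v_\theta\|=:M\neq 0$ for all $\theta$, choose a norming functional $\Lambda$ with $\|\Lambda\|=1$ and $\Lambda(v)=M$; then
\begin{equation*}
0=\int_0^{2\pi}\bigl(M-\Lambda(v_\theta)\bigr)\,\frac{d\theta}{2\pi},
\end{equation*}
with nonnegative continuous integrand, forces $\Lambda(v_\theta)=M=\|v_\theta\|$ for every $\theta$. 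Strict convexity of $L^p$, $1<p<\infty$, implies that the face $\{x:\|x\|=\Lambda(x)=M\}$ is a single point (two distinct points in it would have a midpoint of norm $M$, which strict convexity forbids), so $v_\theta=v$ for all $\theta$; holomorphy then upgrades the a.e.\ identity $f(e^{i\theta_0}z)=e^{ij\theta_0}a_jz^j$ to an identity everywhere, giving $f(w)=a_jw^j$ exactly as you sketch. One last bookkeeping remark: the hypothesis should read $f\in\mathcal{H}^1_{p,\alpha}$, as in the paper's Corollary \ref{cor5n=1}; the $p'$ in the statement is a typo, and your proof correctly uses only $\|f\|_{p,\alpha}<\infty$.
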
 
Corollary \ref{t.4.5.new} is proven as Corollary \ref{cor5n=1}. A non-sharp bound (but one independent of $j$) akin to \eqref{t.4.5.new.eq1} can be found in \cite[Ch. 2, Ex. 18]{Zhu}, and a growth condition on Taylor coefficients can be found as Corollary 5 in \cite{Tung1}.  We believe that the sharp estimate of \eqref{t.4.5.new.eq1} is new.  Finally, we prove the following partial result: restricting to {\em Gaussian}-like functions in $\H^1_{p,\alpha}$ yields the desired maximum.

\begin{theorem} \label{t.5} Let $\mathcal{G}_\alpha$ denote the space of quadratic exponential functions in $\H^1_{2,\alpha}$ (cf.\ \eqref{Galphadef}).  Then
\[ \sup_{g,h\in\mathcal{G}_\alpha} \mathcal{R}_{p,\alpha}(g,h) = \sqrt{C_{p}}. \]
\end{theorem}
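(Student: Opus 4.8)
The plan is to turn the maximization over $\mathcal{G}_\alpha\times\mathcal{G}_\alpha$ into an explicit two-variable calculus problem, using the symmetries of the Fock space together with the fact that every integral in sight is Gaussian. First I would normalize: the dilation $z\mapsto\sqrt{\alpha/2}\,z$ carries $\mathcal{H}^1_{p,\alpha}$ onto $\mathcal{H}^1_{p,2}$ while preserving $\mathcal{R}_{p,\alpha}$, so I may assume $\alpha=2$; then $e^{az^2+bz+c}\in\mathcal{G}_\alpha$ precisely when $|a|<1$, and the additive constant $c$ cancels in $\mathcal{R}_{p,\alpha}$ and may be dropped. Two families of maps preserve $\mathcal{R}_{p,\alpha}$ and act on $\mathcal{G}_\alpha$: the rotations $f(z)\mapsto f(e^{i\theta}z)$, which send $a\mapsto ae^{2i\theta}$ and $b\mapsto be^{i\theta}$, and the displacement operators $U_wf(z)=e^{2\bar wz}f(z-w)$. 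A direct Gaussian computation shows that $U_w$ scales $\|\cdot\|_{p,\alpha}$ by $e^{|w|^2}$ for every $p$ and scales $\langle\cdot,\cdot\rangle_\alpha$ by $e^{2|w|^2}$, so $\mathcal{R}_{p,\alpha}(U_wg,U_wh)=\mathcal{R}_{p,\alpha}(g,h)$; moreover $U_w$ fixes the quadratic coefficient $a$ and sends $b\mapsto b-2aw+2\bar w$. Since $|a|<1$, the real-linear map $w\mapsto 2\bar w-2aw$ is invertible, so I can choose $w$ to annihilate the linear term of $g$. Applying that same $U_w$ to $h$ and then rotating, I reduce to $g(z)=e^{az^2}$ with $a\in[0,1)$ and $h(z)=e^{cz^2+dz}$ with $c,d\in\C$, $|c|<1$.

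Next I would evaluate the three integrals via the master formula
\[ \int_{\C}e^{-|z|^2+\mu z^2+\nu\bar z^2+\rho z+\sigma\bar z}\,\lambda^1(dz)=\frac{\pi}{\sqrt{1-4\mu\nu}}\exp\!\left(\frac{\mu\sigma^2+\nu\rho^2+\rho\sigma}{1-4\mu\nu}\right), \]
valid whenever the quadratic form has negative-definite real part (which $|a|,|c|<1$ guarantees). This gives $\|g\|_{p,\alpha}=(1-a^2)^{-1/(2p)}$ together with closed forms for $\|h\|_{p',\alpha}$ and $\langle g,h\rangle_\alpha$, in which the entire $d$-dependence of $\log\mathcal{R}_{p,\alpha}$ is a single real quadratic form $Q(d)=\mathrm{Re}(\xi\bar d^2)-\eta|d|^2$, with $\eta=\tfrac14(1-|c|^2)^{-1}>0$ and $\xi=\tfrac14\big(a(1-a\bar c)^{-1}-c(1-|c|^2)^{-1}\big)$. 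The decisive step is that $Q$ is negative semidefinite, i.e.\ $|\xi|\le\eta$: clearing the positive factor $1-|c|^2$, this is exactly
\[ \left|\frac{a(1-|c|^2)}{1-a\bar c}-c\right|=\left|\frac{a-c}{1-a\bar c}\right|\le 1, \]
the Schwarz--Pick inequality for the unit disk, equivalent to the identity $|1-\bar ca|^2-|a-c|^2=(1-|a|^2)(1-|c|^2)\ge0$. Hence $Q$ is maximized only at $d=0$: the linear (coherent-state) term provably cannot help. Finally the norms depend on $c$ only through $|c|$, while $|1-a\bar c|$ is smallest when $c\ge0$, so I may also take $c\in[0,1)$ real.

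With $u=a$ and $v=c$ the problem collapses to maximizing
\[ \mathcal{R}_{p,\alpha}=\frac{(1-u^2)^{1/(2p)}\,(1-v^2)^{1/(2p')}}{(1-uv)^{1/2}},\qquad u,v\in[0,1). \]
For the upper bound I would use $1-uv\ge\tfrac12\big[(1-u^2)+(1-v^2)\big]$ (just $(u-v)^2\ge0$) and then weighted AM--GM in $X=1-u^2$, $Y=1-v^2$: the scale-invariant quantity $X^{1/(2p)}Y^{1/(2p')}/(\tfrac12(X+Y))^{1/2}$ is maximized at $X=1/p$, $Y=1/p'$, yielding $\mathcal{R}_{p,\alpha}\le\sqrt2\,p^{-1/(2p)}{p'}^{-1/(2p')}=\sqrt{C_p}$. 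For the matching lower bound I would run $u\to1^-$ along $v=1-(p-1)(1-u)$; a one-line expansion shows the common power of $1-u$ cancels and $\mathcal{R}_{p,\alpha}\to\sqrt2\,(p-1)^{1/(2p')}p^{-1/2}$, which simplifies to $\sqrt{C_p}$ using $p-1=p/p'$. Together these prove $\sup_{g,h\in\mathcal{G}_\alpha}\mathcal{R}_{p,\alpha}(g,h)=\sqrt{C_p}$, with the value approached as the Gaussians degenerate to the boundary $a,c\to1^-$ of $\mathcal{H}^1_{2,\alpha}$, and attained only when $p=2$.

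The step I expect to be the real obstacle is the middle one. A priori the coherent-state parameter $d$ looks as though it should enlarge the ratio, and if it did the Gaussian supremum would exceed the conjectured global value $C_p^{n/2}$; ruling it out is the whole point. That difficulty is concentrated entirely in the inequality $|\xi|\le\eta$, which becomes tractable only after the algebraic collapse $\tfrac{a(1-|c|^2)}{1-a\bar c}-c=\tfrac{a-c}{1-a\bar c}$ reveals it to be Schwarz--Pick. Once $d=0$ is forced, the surviving two-variable optimization is routine.
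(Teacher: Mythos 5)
Your proposal is correct, and although it arrives at the same final two-variable reduction as the paper, it gets there by genuinely different means. The paper likewise computes all three Gaussian quantities in closed form (Lemmas \ref{gaussianpnorma} and \ref{gaussianpairinga}), but it disposes of the linear coefficients by brute force: fixing the other three parameters and maximizing the exponent over the linear coefficient of $g$, which requires locating the critical point, computing the Hessian \eqref{hessianmatrix}, and proving negative definiteness via a harmonicity/maximum-modulus argument on the unit disk (Lemma \ref{criticalpointlemma1}). You instead gauge away the linear coefficient of $g$ with displacement operators --- which, as you claim and as a direct computation confirms, scale every $\|\cdot\|_{p,\alpha}$ by $e^{|w|^2}$ and the pairing $\langle\cdot,\cdot\rangle_\alpha$ by $e^{2|w|^2}$, hence preserve $\mathcal{R}_{p,\alpha}$ and map $\mathcal{G}_\alpha$ onto itself modulo constants --- so the one surviving linear parameter $d$ enters $\log\mathcal{R}_{p,\alpha}$ only through the quadratic form $Q(d)=\Re(\xi\bar d^2)-\eta|d|^2$. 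Your coefficients $\xi,\eta$ agree with what the paper's formulas give after renaming, the algebraic collapse $\frac{a(1-|c|^2)}{1-a\bar c}-c=\frac{a-c}{1-a\bar c}$ is correct, and negative semidefiniteness is then exactly Schwarz--Pick: this is the same disk geometry the paper extracts, less transparently, from its Hessian. For the final optimization of $(1-u^2)^{1/p}(1-v^2)^{1/p'}/(1-uv)$ over $u,v\in[0,1)$, the paper again runs calculus (the critical curve \eqref{ycritical}, a monotonicity argument, and a boundary limit), whereas you use $(u-v)^2\ge 0$ plus weighted AM--GM for the upper bound and an explicit degenerating path for the lower bound; reassuringly, your path $1-v=(p-1)(1-u)$ is precisely the first-order behavior of the paper's critical curve near $(u,v)=(1,1)$, and your limiting value $2(p-1)^{1/p'}/p=C_p$ checks out. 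What your route buys is economy and conceptual clarity (symmetry, Schwarz--Pick, AM--GM; no Hessians), plus an immediate equality analysis (attainment only at $p=2$). What the paper's route buys is finer information: Lemma \ref{criticalpointlemma1} identifies the exact maximizing linear coefficient \eqref{criticalpointeq2a} for arbitrary fixed quadratic data, and the critical curve \eqref{ycritical} describes how all maximizing sequences degenerate, not just one path along which they do.
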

\noindent Given the Gaussian nature of the spaces $\H^1_{p,\a}$, it is very natural to expect the maximum of $\mathcal{R}_{p,\alpha}$ to be achieved on ``Gaussian'' functions, in light of \cite{Lieb}, for example.  Theorem \ref{t.5} is proved as Theorem \ref{gaussianprop} below.

\section{The Sharp Lower Constant}
Our overall goal in this section is to prove Theorem \ref{t.1}.  To prove this theorem and others in the paper, many integrals involving Gaussian and exponential functions will be calculated.  Often the details of these calculations will be omitted, but are based on the following formula (cf.\ \cite{LiebLoss}):
\begin{lemma} \label{l.Gauss.int}
Let $A$ be a complex symmetric matrix, $v$ a vector in $\R^k$, and let $(\cdot,\cdot)$ denote the standard inner product on $\R^k$.  Define the function $f(x)=\exp(-(x,Ax)+2(v,x))$.  Then $f\in L^1(\R^k)$ if and only if $\Re(A)$ is positive definite, and in this case,
\begin{equation}
\int_{\R^k} e^{-(x,Ax)+2(v,x)}\, dx = \frac{\pi^{k/2}}{\sqrt{\det(A)}}e^{(v,A^{-1}v)}.
\end{equation}
\end{lemma}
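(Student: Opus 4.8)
The plan is to prove the integrability criterion directly, establish the identity for real positive-definite $A$ by an elementary change of variables, and then pass to the general complex case by analytic continuation in the entries of $A$ (with the fixed real vector $v$ held throughout).

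First I would settle integrability. Since $x$ and $v$ are real, $(v,x)$ is real and $\Re(x,Ax)=(x,\Re(A)x)$, so $|f(x)|=\exp(-(x,\Re(A)x)+2(v,x))$. Writing $B=\Re(A)$, a real symmetric matrix, this is integrable precisely when $B$ is positive definite: if $B\succ 0$ then $(x,Bx)\ge c|x|^2$ for some $c>0$, the quadratic term dominates the linear one, and one gets an integrable Gaussian bound; whereas if $B$ has an eigenvalue $\le 0$ the integrand fails to decay along the corresponding eigendirection. This gives the equivalence $f\in L^1(\R^k)\iff\Re(A)\succ 0$.

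Next I treat the real case. Suppose $A$ is real and positive definite. Completing the square gives $-(x,Ax)+2(v,x)=-(x-A^{-1}v,\,A(x-A^{-1}v))+(v,A^{-1}v)$, so after the translation $x\mapsto x+A^{-1}v$ the integral equals $e^{(v,A^{-1}v)}\int_{\R^k}e^{-(y,Ay)}\,dy$. Diagonalizing $A=O^{\mathsf T}DO$ with $O$ orthogonal and $D$ diagonal with positive entries $d_1,\dots,d_k$, the orthogonal substitution (Jacobian $1$) factors the remaining integral into one-dimensional Gaussians $\prod_j\int_\R e^{-d_j t^2}\,dt=\prod_j\sqrt{\pi/d_j}=\pi^{k/2}/\sqrt{\det A}$, yielding the claimed formula for real positive-definite $A$ with the positive square root.

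Finally, analytic continuation. Let $D=\{A\text{ complex symmetric}:\Re(A)\succ 0\}$, an open convex (hence connected) set. Both sides are holomorphic in the entries of $A$ on $D$: for the left side, on any compact $K\subset D$ one has $\Re(A)\succeq cI$ uniformly, whence $|e^{-(x,Ax)+2(v,x)}|\le e^{-c|x|^2+2|v|\,|x|}\in L^1$, and since the integrand is entire in each $A_{ij}$, differentiation under the integral (or Morera's theorem) gives joint holomorphy. For the right side I first check $A$ is invertible on $D$: if $Az=0$ with $z=x+iy$, a short computation with $B=\Re(A)$, $C=\Im(A)$ symmetric shows $\Re(z^\ast Az)=(x,Bx)+(y,By)>0$ for $z\ne 0$, so $\det A\ne 0$ throughout $D$; since $D$ is simply connected I fix the unique holomorphic branch of $\sqrt{\det A}$ reducing to the positive root on real positive-definite matrices, making the right side holomorphic. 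The two holomorphic functions agree on the real positive-definite matrices, an open subset of the totally real locus of real symmetric matrices inside $D$ and hence a uniqueness set, so by the identity theorem they coincide on all of $D$. The main obstacle—really the only delicate point—is the bookkeeping here: pinning down the correct holomorphic branch of $\sqrt{\det A}$ (not its negative) and justifying that agreement on the real locus forces agreement on all of $D$; if one prefers to avoid the totally-real uniqueness statement, one can instead continue one entry at a time, at each stage invoking the one-variable identity theorem along complex lines meeting the region where the formula is already known.
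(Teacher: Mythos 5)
The paper offers no proof of this lemma at all---it simply cites Lieb--Loss---and your argument (integrability criterion via $|f(x)|=e^{-(x,\Re(A)x)+2(v,x)}$, the real positive-definite case by completing the square and diagonalizing, then analytic continuation in the entries of $A$ over the convex, hence simply connected, domain $\{\Re(A)\succ 0\}$, with invertibility of $A$ checked via $\Re(\bar z^{\mathsf T}Az)=(x,\Re(A)x)+(y,\Re(A)y)$ and the branch of $\sqrt{\det A}$ pinned down by positivity on real matrices) is correct and is essentially the standard proof found in that cited reference. The only step stated too loosely is the "only if" direction of integrability: failure of decay along a single eigendirection does not by itself force divergence (a ray is a null set), but this is repaired immediately by bounding the integral below on a tube $\{tu+w: t>0,\ |w|\le 1,\ w\perp u\}$ around a unit eigenvector $u$ of $B=\Re(A)$ with eigenvalue $\lambda\le 0$, where $(u,Bw)=(Bu,w)=\lambda(u,w)=0$ shows the exponent is bounded below (after choosing the sign of $u$ so that $(v,u)\ge 0$), so the integral over this infinite-volume tube diverges.
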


Theorem \ref{t.1} has two claims: first, that 
\begin{equation}\label{t.1.1}
\inf_{h\in\mathcal{H}^n_{p'\!,\alpha}\setminus\{0\}} \frac{  \| \langle \cdot,h\rangle_\alpha\|_{p,\alpha}^\ast}{ \|h\|_{p'\!,\alpha} } = 1
\end{equation}
and the infimum is achieved on functions of the form $h^\alpha_z(w)=e^{\alpha\langle w, z\rangle}$ for any $z\in\C^n$. Secondly, for any $z\in\C^n$, and any $g\in\mathcal{H}^n_{p,\alpha}$,
\begin{equation} \label{e.ptwise.bd2} 
|g(z)| \le e^{\frac{\alpha}{2}|z|^2}\|g\|_{p,\alpha}
\end{equation} 
and the above equation is sharp.

As we stated in the introduction, \eqref{e.ptwise.bd2} is well-known. In fact one can use it to prove the first part of the Theorem \ref{t.1}.  To see how, we first record a fact (first proved in \cite{JPR}) that will be useful in the following arguments as well.  The projection $P^n_\alpha\colon L^2(\gamma^n_\alpha)\to \mathcal{H}^n_{p,\alpha}$ is given by the integral operator
\begin{equation} \label{e.Pn.int} (P^n_\alpha g)(z) = \int_{\C^n} e^{\alpha\langle z,w\rangle}g(w) \,\gamma^n_\alpha(dw) = \langle g,h_z^\alpha\rangle_\alpha, \end{equation}
where for any $z\in\mathbb{C}^n$ we define the function $h_z^\alpha$ as
\begin{equation}h_z^\alpha(w) = e^{\alpha\langle w,z\rangle}.\nonumber\end{equation}
Since polynomials are dense in $L^2(\gamma^n_\alpha)$, we may extend this integral operator to act densely on any space in which polynomials are dense.  The first main theorem of \cite{GrycKemp2011} shows that $P^n_\alpha$ is, in fact, bounded on $L^p(\gamma^n_{\alpha p/2})$, with image in $\mathcal{H}^n_{p,\alpha}$.  Now, any holomorphic polynomial $g$ over $\C^n$ is in $\mathcal{H}^n_{2,\alpha}$ and so $P^n_\alpha g=g$; since holomorphic polynomials are dense in $\mathcal{H}^n_{p,\alpha}$, it therefore follows that
\begin{equation} \label{e.Pn.rk} (P^n_\alpha g)(z)=\langle g,h_z^\alpha\rangle_\alpha = g(z), \qquad \text{for all} \qquad z\in \C^n, \; g\in \mathcal{H}^n_{p,\alpha}. \end{equation}
\begin{remark} Since $\mathcal{H}^n_{p,\alpha}\subset L^p(\gamma^n_{\alpha p/2})$, it might seem more natural to expect that the reproducing formula for functions in $\mathcal{H}^n_{p,\alpha}$ should involve the reproducing kernel $h_z^{\alpha p/2}(w) = e^{\frac{\alpha p}{2}\langle z,w\rangle}$.  This would be true if the inner product used was $\langle\cdot,\cdot\rangle_{\alpha p/2}$; it is a remarkable and useful fact that, using the fixed inner product $\langle\cdot,\cdot\rangle_\alpha$ for all $\mathcal{H}^n_{p,\alpha}$ spaces gives a consistent reproducing kernel for all of them.  \end{remark}

Now we can state the following lemma:
\begin{lemma}\label{lem.t.1.1}
Let $1<p<\infty$. If \eqref{e.ptwise.bd2} holds for all $g\in\mathcal{H}^n_{p,\alpha}$ and $z\in\C^n$, then \eqref{t.1.1} holds.
\end{lemma}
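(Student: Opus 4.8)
The plan is to use the reproducing formula \eqref{e.Pn.rk} to identify $\langle\cdot,h_z^\alpha\rangle_\alpha$ with point evaluation, and then to test the ratio in \eqref{t.1.1} on the family $h_z^\alpha(w)=e^{\alpha\langle w,z\rangle}$. First, I would observe that the ratio is bounded below by $1$ for every nonzero $h\in\mathcal{H}^n_{p'\!,\alpha}$: this is exactly the first inequality in \eqref{e.norm.equiv} from the cited Theorem~1.2. Hence the infimum in \eqref{t.1.1} is at least $1$, and it remains only to exhibit functions on which the ratio is at most $1$.

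By the reproducing formula \eqref{e.Pn.rk}, for every $g\in\mathcal{H}^n_{p,\alpha}$ we have $\langle g,h_z^\alpha\rangle_\alpha=g(z)$, so the functional $\langle\cdot,h_z^\alpha\rangle_\alpha$ is precisely evaluation at $z$. Its dual norm is therefore $\sup_{g\neq 0}|g(z)|/\|g\|_{p,\alpha}$, which the assumed pointwise bound \eqref{e.ptwise.bd2} controls by $e^{\frac{\alpha}{2}|z|^2}$; thus $\|\langle\cdot,h_z^\alpha\rangle_\alpha\|_{p,\alpha}^\ast\le e^{\frac{\alpha}{2}|z|^2}$. For the denominator I would compute $\|h_z^\alpha\|_{p',\alpha}$ directly: writing $|h_z^\alpha(w)|^{p'}=e^{p'\alpha\Re\langle w,z\rangle}$ and splitting $w,z$ into real and imaginary parts, the integral against $\gamma^n_{\alpha p'/2}$ factors into real Gaussian integrals governed by Lemma~\ref{l.Gauss.int}; the normalization constant of $\gamma^n_{\alpha p'/2}$ cancels the resulting determinant, leaving $\|h_z^\alpha\|_{p',\alpha}=e^{\frac{\alpha}{2}|z|^2}$. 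Hence the ratio evaluated at $h_z^\alpha$ is at most $1$, and combined with the universal lower bound it equals $1$; so the infimum in \eqref{t.1.1} is $1$ and is attained on every $h_z^\alpha$.

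There is no substantial obstacle here; the only genuine idea is recognizing, through \eqref{e.Pn.rk}, that testing the dual norm against $h_z^\alpha$ converts it into the best constant in the pointwise bound \eqref{e.ptwise.bd2}. The remaining work is the routine Gaussian integral, where the only care needed is in tracking the inner-product convention so that $\Re\langle w,z\rangle$ yields the exponent $\frac{\alpha}{2}|z|^2$ once the measure's normalization cancels the determinant factor.
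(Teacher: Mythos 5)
Your proof is correct and follows essentially the same route as the paper's: both establish the lower bound via the first inequality in \eqref{e.norm.equiv}, identify $\langle\cdot,h_z^\alpha\rangle_\alpha$ with point evaluation through \eqref{e.Pn.rk}, bound its dual norm by $e^{\frac{\alpha}{2}|z|^2}$ using the assumed pointwise estimate, and compute $\|h_z^\alpha\|_{p'\!,\alpha}=e^{\frac{\alpha}{2}|z|^2}$ via Lemma \ref{l.Gauss.int}. The Gaussian computation you outline is exactly right, so nothing is missing.
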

\begin{proof}
Assume \eqref{e.ptwise.bd2} holds for all $g\in\mathcal{H}^n_{p,\alpha}$.  Note that by \eqref{e.norm.equiv} we know that 
\begin{equation}\label{e.2.0}
\inf_{h\in\mathcal{H}^n_{p'\!,\alpha}\setminus\{0\}} \frac{  \| \langle \cdot,h\rangle_\alpha\|_{p,\alpha}^\ast}{ \|h\|_{p'\!,\alpha} } \geq 1
\end{equation}
Assume that \eqref{e.ptwise.bd2} holds for functions $g$ in $\mathcal{H}^n_{p,\alpha}$.  For any fixed $z\in\C^n$, the functional $\langle\cdot, h^\alpha_z\rangle_\alpha$ is pointwise evaluation at $z$, cf.\ \eqref{e.Pn.rk}.  Thus, by \eqref{e.ptwise.bd2}, we know that
\begin{equation}
\|\langle\cdot, h_z^\alpha\rangle_\alpha\|_{p,\alpha}^\ast \le e^{\frac{\alpha}{2}|z|^2}\nonumber
\end{equation}
However, one can easily calculate using Lemma \ref{l.Gauss.int} that 
\begin{equation}
\|h_z^\alpha\|_{p',\alpha} = e^{\frac{\alpha}{2}|z|^2}\nonumber,
\end{equation}
proving that $\|\langle\cdot, h_z^\alpha\rangle\|_{p,\alpha}^\ast\le\|h_z^\alpha\|_{p',\alpha}$.  Thus, by \eqref{e.2.0} we have proven \eqref{t.1.1} and shown that this infimum is achieved at each $h_z^\alpha$, as desired.
\end{proof}

Note that Lemma \ref{lem.t.1.1} actually proves Theorem \ref{t.1} since \eqref{e.ptwise.bd2} is known to hold for all $g\in\mathcal{H}^n_{p,\alpha}$ and $z\in\C^n$; it is often referred to as Bargmann's inequality.  However, we have an alternate proof of Theorem \ref{t.1} that proves the result independently of the a priori truth of \eqref{e.ptwise.bd2}.  That is, without assuming \eqref{e.ptwise.bd2} is true, we can prove both \eqref{t.1.1} and \eqref{e.ptwise.bd2}.  This proof is based on the following lemma: 
\begin{lemma}\label{l.2.1}
Let $n\in\N$, $\alpha>0$, and $1<p<\infty$ with $\frac{1}{p}+\frac{1}{p'}=1$.  Let $h\in\mathcal{H}_{p',\alpha}^n$.  Then
	\begin{equation*}
	\|\langle\cdot,h\rangle_{\alpha}\|_{p,\alpha}^\ast=C_p^n\inf_{f\in P_\alpha^{-1} h} \|f\|_{p',\alpha}.
	\end{equation*}
	Furthermore, there exists a function $\tilde{f}\in P_\alpha^{-1} h$ where the infimum is achieved.
\end{lemma}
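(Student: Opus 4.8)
The plan is to prove the asserted equality by establishing the two inequalities separately: the bound ``$\le$'' will follow by applying a Hölder estimate to \emph{every} element of $P_\alpha^{-1}h=\{f: P_\alpha^n f=h\}$, while the reverse bound ``$\ge$'' together with the existence of a minimizer $\tilde f$ will come from a single Hahn--Banach argument carried out after transporting the whole problem into an ordinary Lebesgue $L^p$-space. Throughout, $f$ ranges over the ambient space $L^{p'}(\gamma^n_{\alpha p'/2})$ on which $P_\alpha^n$ is bounded (with $\|P_\alpha^n\|_{p'\to p'}=C_{p'}^n=C_p^n$, since $C_p$ is symmetric in $p,p'$), and $\|\cdot\|_{p',\alpha}$ denotes the full $L^{p'}(\gamma^n_{\alpha p'/2})$-norm.

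First I would isolate two computational facts. The first is the Hölder-type bound $|\langle g,f\rangle_\alpha|\le C_p^n\,\|g\|_{p,\alpha}\,\|f\|_{p',\alpha}$ for $g\in\mathcal{H}^n_{p,\alpha}$ and $f\in L^{p'}(\gamma^n_{\alpha p'/2})$, obtained via the substitution $G=g\,e^{-\alpha|\cdot|^2/2}$, $F=f\,e^{-\alpha|\cdot|^2/2}$: this converts $\|g\|_{p,\alpha}$ and $\|f\|_{p',\alpha}$ into constant multiples of $\|G\|_{L^p(\lambda^n)}$ and $\|F\|_{L^{p'}(\lambda^n)}$, turns $\langle g,f\rangle_\alpha$ into $(\alpha/\pi)^n\int_{\C^n}G\overline F\,d\lambda^n$, and reduces everything to the ordinary Hölder inequality; the normalization constants assemble into exactly $C_p^n=2^n p^{-n/p}{p'}^{-n/p'}$. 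The second fact is the self-adjointness identity $\langle g,f\rangle_\alpha=\langle g,P_\alpha^n f\rangle_\alpha$ for $g\in\mathcal{H}^n_{p,\alpha}$, which I would prove by writing $\langle g,P_\alpha^n f\rangle_\alpha$ as a double integral, using $\overline{h_w^\alpha(z)}=e^{\alpha\langle w,z\rangle}$ and the reproducing formula \eqref{e.Pn.rk} to collapse the inner integral $\int g(z)e^{\alpha\langle w,z\rangle}\,\gamma^n_\alpha(dz)=g(w)$.

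The inequality ``$\le$'' is then immediate: for $f\in P_\alpha^{-1}h$ the second fact gives $\langle g,h\rangle_\alpha=\langle g,P_\alpha^n f\rangle_\alpha=\langle g,f\rangle_\alpha$ for every $g\in\mathcal{H}^n_{p,\alpha}$, so the first fact yields $|\langle g,h\rangle_\alpha|\le C_p^n\|g\|_{p,\alpha}\|f\|_{p',\alpha}$; dividing by $\|g\|_{p,\alpha}$, taking the supremum over $g$, then the infimum over $f$ gives $\|\langle\cdot,h\rangle_\alpha\|_{p,\alpha}^\ast\le C_p^n\inf_{f\in P_\alpha^{-1}h}\|f\|_{p',\alpha}$. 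For ``$\ge$'' and the minimizer, I would transport $\Lambda=\langle\cdot,h\rangle_\alpha$ along the isometry $W_p\colon\mathcal{H}^n_{p,\alpha}\to L^p(\lambda^n)$, $W_pg=\bigl(\tfrac{\alpha p}{2\pi}\bigr)^{n/p}g\,e^{-\alpha|\cdot|^2/2}$, whose image $\mathcal{S}^n_p$ is a closed subspace of $L^p(\lambda^n)$. The computation above shows $\Lambda$ becomes $G\mapsto C_p^n\int_{\C^n}G\overline H\,d\lambda^n$ on $\mathcal{S}^n_p$, where $H=W_{p'}h$. Hahn--Banach furnishes a norm-preserving extension to all of $L^p(\lambda^n)$, and $L^p(\lambda^n)^\ast=L^{p'}(\lambda^n)$ represents it by an element of $L^{p'}(\lambda^n)$; rescaling by $C_p^n$ produces $\Phi\in L^{p'}(\lambda^n)$ with $\Phi-H\in(\mathcal{S}^n_p)^\perp$ and $C_p^n\|\Phi\|_{p'}=\|\Lambda\|_{p,\alpha}^\ast$. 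Setting $\tilde f=W_{p'}^{-1}\Phi$ gives $\|\tilde f\|_{p',\alpha}=\|\Phi\|_{p'}$, so it remains to verify $\tilde f\in P_\alpha^{-1}h$: unwinding $\Phi-H\in(\mathcal{S}^n_p)^\perp$ through $W_{p'}$ shows it is equivalent to $\langle g,\tilde f\rangle_\alpha=\langle g,h\rangle_\alpha$ for all $g\in\mathcal{H}^n_{p,\alpha}$, hence by the second fact to $\langle g,P_\alpha^n\tilde f-h\rangle_\alpha=0$ for all such $g$; testing against the reproducing functions $g=h_z^\alpha\in\mathcal{H}^n_{p,\alpha}$ and using \eqref{e.Pn.rk} forces the holomorphic function $P_\alpha^n\tilde f-h$ to vanish identically, i.e.\ $P_\alpha^n\tilde f=h$. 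This $\tilde f$ realizes equality and is the desired minimizer.

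I expect the main obstacle to be the rigorous justification of the self-adjointness identity $\langle g,f\rangle_\alpha=\langle g,P_\alpha^n f\rangle_\alpha$ for \emph{general} $f\in L^{p'}(\gamma^n_{\alpha p'/2})$, where the formal Fubini interchange in the double integral must be justified. I would handle this by first proving the identity for a dense class of $f$ with sufficient decay (for instance bounded, compactly supported functions), where the double integral is absolutely convergent and Fubini applies directly, and then extending by continuity, using that both $f\mapsto\langle g,f\rangle_\alpha$ and $f\mapsto\langle g,P_\alpha^n f\rangle_\alpha$ are continuous on $L^{p'}(\gamma^n_{\alpha p'/2})$ by the Hölder bound and the boundedness of $P_\alpha^n$. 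Notably, this route establishes the lemma without invoking the pointwise bound \eqref{e.ptwise.bd2}, which is precisely what is needed for the promised alternate, self-contained proof of Theorem \ref{t.1}.
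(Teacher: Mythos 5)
Your proposal is correct and follows essentially the same route as the paper: the upper bound via the self-adjointness of $P^n_\alpha$ together with the $C_p^n$-weighted H\"older inequality, and the lower bound plus minimizer via the isometry onto holomorphic sections in $L^p(\C^n,\lambda^n)$, a Hahn--Banach extension, and $L^{p'}(\lambda^n)$-duality (the paper packages this as Lemma \ref{l.2.2} for the section spaces and then transports by \eqref{inflemma5}). The only cosmetic differences are that you verify $P^n_\alpha\tilde f=h$ by testing against the reproducing kernels $h_z^\alpha$ rather than against all of $L^p(\lambda^n)$ as the paper does, and that you prove the self-adjointness identity by density where the paper cites \cite{JPR}; both variants are sound.
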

The rest of this section is devoted to proving the Lemma \ref{l.2.1} and using it to prove Theorem \ref{t.1}.

\subsection{A Relationship Between the Norm, the Dual Norm, and the Projection $P^n_\alpha$}
Before we prove Lemma \ref{l.2.1}, we need some preliminary results.  While we refer to the mapping $P^n_\alpha:L^p(\gamma^n_{\alpha p/2})\to\mathcal{H}_{p,\alpha}^n$ as a ``projection,'' it is of course not a true orthogonal projection for $p\neq 2$ as in this case $\mathcal{H}_{p,\alpha}^n$ is not a Hilbert space.  However, it acts like a projection in the following ways (as proven in \cite{JPR}):  $P^n_\alpha$ is the identity on elements in $\mathcal{H}_{p,\alpha}^n$ (this was actually shown in \eqref{e.Pn.rk}) and $P^n_\alpha$ is ``self-adjoint'' in the following sense: 
\begin{equation}
\langle P^n_\alpha g, h\rangle_\alpha = \langle g, P^n_\alpha h\rangle_\alpha\mbox{ for all $(g,h)\in L^p(\gamma^n_\alpha)\times L^{p'}(\gamma^n_\alpha)$.}\label{sa1}
\end{equation}
As we alluded to in the Introduction, to prove a statement about the spaces $\mathcal{H}_{p,\alpha}^n$ it can be useful to prove an analogous statement in a corresponding Lebesgue measure setting.  Indeed, the differing measures of $\gamma^n_{\alpha p/2}$ and $\gamma^n_{\alpha p'/2}$ preclude us from using some basic results of duality in $L^p$ spaces.  To remove this complication, we define a mapping $\mathfrak{g}^n_{p,\alpha}:L^p(\gamma^n_{\alpha p/2})\to L^p(\C^n, \lambda^n)$ as
\begin{equation}
(\mathfrak{g}^n_{p,\alpha} f)(z) = \left(\frac{p\alpha}{2\pi}\right)^{n/p} e^{-\frac{\alpha}{2}|z|^2}f(z). \nonumber
\end{equation}
It is easy to check that $\mathfrak{g}^n_{p,\alpha}$ is an isometric isomorphism.  Furthermore, define the set $\mathcal{S}^n_{p,\alpha}$ as the image of $\mathcal{H}_{p,\alpha}^n$ under $\mathfrak{g}^n_{p,\alpha}$ above.  That is,
\begin{equation}
\mathcal{S}^n_{p,\alpha}=\{F: \|F\|_{p,\lambda}<\infty,\, z\mapsto F(z)e^{\frac{\alpha}{2}|z|^2}\mbox{ is holomorphic}\}\nonumber
\end{equation}
where $\|F\|_{p,\lambda}$ is the $L^p(\C^n,\lambda^n)$ norm of $F$. The space $\mathcal{S}^n_{p,\alpha}$ is the set of so-called ``holomorphic sections'' mentioned in the introduction.  Using the isomorphism $\mathfrak{g}^n_{p,\alpha}$ one can see that $(\mathcal{S}^n_{p,\alpha})^\ast=\mathcal{S}^n_{p',\alpha}$ as identified using the usual Lebesgue integral pairing $(G,H)_\lambda=\int_{\C^n} G\overline{H}\, d\lambda^n$.

Define a new operator $Q^n_\alpha:L^p(\C^n, \lambda^n)\to L^p(\C^n,\lambda^n)$ as 
	\begin{equation}
	Q^n_\alpha=\mathfrak{g}^n_{p,\alpha} P^n_\alpha \left(\mathfrak{g}^n_{p,\alpha}\right)^{-1}.\nonumber
	\end{equation}
	Note that $Q^n_\alpha$ does not actually depend on $p$.  Indeed, $\mathfrak{g}^n_{p,\alpha}$ only depends on $p$ through multiplication by $p$-dependent constant.  From this fact, it is easy to see that  
	\begin{equation*}
	Q^n_\alpha =\mathfrak{g}_{\alpha,2} P^n_\alpha \mathfrak{g}_{\alpha,2}^{-1},
	\end{equation*}
	justifying the notation.  By definition, the following diagram commutes:
	$$\xymatrix{
L^p(\gamma_{\alpha p /2}) \ar[r]^{\mathfrak{g}^n_{p,\alpha}} \ar[d]_{P^n_\alpha} & L^p(\C^n,\lambda^n) \ar[d]^{Q^n_\alpha} \\
\mathcal{H}_{p,\alpha}^n \ar[r]_{\mathfrak{g}^n_{p,\alpha}} & \mathcal{S}^n_{p,\alpha}}$$
	Denote by  $\|Q^n_\alpha\|_{p\to p}$ the norm of $Q^n_\alpha$ as an operator on $L^p(\C^n,\lambda^n)$ and $\|P^n_\alpha\|_{p\to p}$ the norm of $P^n_\alpha$ as an operator on $L^p(\gamma^n_{\alpha p/2})$. Since $\mathfrak{g}^n_{p,\alpha}$ and its inverse are isometric, it is not difficult to show that $P^n_\alpha$ and $Q^n_\alpha$ share many similar properties.  Specifically,
	\begin{enumerate}
	\item $\|Q^n_\alpha\|_{p\to p}=\|P^n_\alpha\|_{p\to p}$,
	\item $Q^n_\alpha$ is the identity on $\mathcal{S}^n_{p,\alpha}$ and maps onto $\mathcal{S}^n_{p,\alpha}$ for $1<p<\infty$, and
	\item $Q^n_\alpha$ is ``self-adjoint'' in the sense of \eqref{sa1} in the pairing $(G,H)_\lambda=\int_{\C^n} G\overline{H}\, d\lambda^n$.
	\end{enumerate}	
To precisely state the third fact above, we write
	\begin{equation}
	(Q^n_\alpha G, H)_\lambda = (G, Q^n_\alpha H)_\lambda\qquad \text{for all}\qquad (G,H)\in L^p(\C^n,\lambda^n)\times L^{p'}(\C^n,\lambda^n). \label{sa2}
	\end{equation}
	We can now state and prove a result analogous to Lemma \ref{l.2.1} for the space of holomorphic sections.  Below $\|\cdot\|_{(\mathcal{S}^n_{p,\alpha})^*}$ denotes the dual norm of $\mathcal{S}^n_{p,\alpha}$.
	\begin{lemma}\label{l.2.2}
	Let $H\in\mathcal{S}^n_{p',\alpha}$.  Then
	\begin{equation*}
\|(\cdot,H)_\lambda\|_{(\mathcal{S}^n_{p,\alpha})^*}=\inf_{F\in (Q^n_\alpha)^{-1} H} \|F\|_{p',\lambda}.
	\end{equation*}
	Furthermore, there exists some $\tilde{F}\in (Q^n_\alpha)^{-1} H \subseteq L^{p'}(\C^n, \lambda^n)$ such that
	\begin{equation*}
	\inf_{F\in (Q^n_\alpha)^{-1} H} \|F\|_{p',\lambda}=\|\tilde{F}\|_{p',\lambda}.
	\end{equation*}
	\end{lemma}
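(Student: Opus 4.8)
The plan is to establish the two inequalities between $\|(\cdot,H)_\lambda\|_{(\mathcal{S}^n_{p,\alpha})^*}$ and $\inf_{F\in (Q^n_\alpha)^{-1}H}\|F\|_{p',\lambda}$ separately, and then to read off the minimizer $\tilde F$ directly from the sharp (reverse) direction, so that the ``furthermore'' clause comes for free. For the easy inequality $\|(\cdot,H)_\lambda\|_{(\mathcal{S}^n_{p,\alpha})^*}\le \inf_F \|F\|_{p',\lambda}$, I would first note that the preimage $(Q^n_\alpha)^{-1}H$ is nonempty, since $Q^n_\alpha H = H$ because $Q^n_\alpha$ is the identity on $\mathcal{S}^n_{p',\alpha}$. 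Then, fixing any $F$ with $Q^n_\alpha F = H$, for every $G\in\mathcal{S}^n_{p,\alpha}$ the self-adjointness \eqref{sa2} together with $Q^n_\alpha G = G$ gives $(G,H)_\lambda=(G,Q^n_\alpha F)_\lambda=(Q^n_\alpha G,F)_\lambda=(G,F)_\lambda$. H\"older's inequality then yields $|(G,H)_\lambda|=|(G,F)_\lambda|\le\|G\|_{p,\lambda}\|F\|_{p',\lambda}$; dividing by $\|G\|_{p,\lambda}$, taking the supremum over $G\in\mathcal{S}^n_{p,\alpha}\setminus\{0\}$, and then the infimum over $F$ completes this direction.

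For the reverse inequality I would obtain both the bound and the minimizer at once. View $(\cdot,H)_\lambda$ as a bounded linear functional on the closed subspace $\mathcal{S}^n_{p,\alpha}\subseteq L^p(\C^n,\lambda^n)$, and apply Hahn--Banach to extend it to a functional $\Phi$ on all of $L^p(\C^n,\lambda^n)$ with $\|\Phi\|=\|(\cdot,H)_\lambda\|_{(\mathcal{S}^n_{p,\alpha})^*}$. By the Riesz representation of $(L^p(\C^n,\lambda^n))^*$, there is some $\tilde F\in L^{p'}(\C^n,\lambda^n)$ with $\Phi=(\cdot,\tilde F)_\lambda$ and $\|\tilde F\|_{p',\lambda}=\|\Phi\|$. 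The content of the argument is to check that $\tilde F\in(Q^n_\alpha)^{-1}H$. Since $\Phi$ extends $(\cdot,H)_\lambda$, one has $(G,\tilde F)_\lambda=(G,H)_\lambda$ for all $G\in\mathcal{S}^n_{p,\alpha}$, and then self-adjointness gives $(G,Q^n_\alpha\tilde F)_\lambda=(Q^n_\alpha G,\tilde F)_\lambda=(G,\tilde F)_\lambda=(G,H)_\lambda$. Because $Q^n_\alpha\tilde F$ and $H$ both lie in $\mathcal{S}^n_{p',\alpha}$ and induce the same functional on $\mathcal{S}^n_{p,\alpha}$, the injectivity of the identification $(\mathcal{S}^n_{p,\alpha})^*=\mathcal{S}^n_{p',\alpha}$ forces $Q^n_\alpha\tilde F=H$. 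Hence $\tilde F\in(Q^n_\alpha)^{-1}H$ with $\|\tilde F\|_{p',\lambda}=\|(\cdot,H)_\lambda\|_{(\mathcal{S}^n_{p,\alpha})^*}$, which simultaneously proves $\inf_F\|F\|_{p',\lambda}\le\|(\cdot,H)_\lambda\|_{(\mathcal{S}^n_{p,\alpha})^*}$ and realizes the infimum at $\tilde F$.

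The main obstacle is precisely this middle verification: a priori the Riesz representative $\tilde F$ only agrees with $H$ when paired against sections, and one must upgrade this to the honest identity $Q^n_\alpha\tilde F=H$ in $L^{p'}(\C^n,\lambda^n)$. This is exactly where all three ``projection-like'' properties of $Q^n_\alpha$ are needed in concert — self-adjointness \eqref{sa2}, its acting as the identity on the section space, and its mapping onto $\mathcal{S}^n_{p',\alpha}$ — together with the fact that the duality $(\mathcal{S}^n_{p,\alpha})^*=\mathcal{S}^n_{p',\alpha}$ is a genuine injective identification, so that equality of the induced functionals implies equality of the representing sections. Once this is in hand the chain $\|(\cdot,H)_\lambda\|_{(\mathcal{S}^n_{p,\alpha})^*}\le\inf_F\|F\|_{p',\lambda}\le\|\tilde F\|_{p',\lambda}=\|(\cdot,H)_\lambda\|_{(\mathcal{S}^n_{p,\alpha})^*}$ collapses to equalities throughout, establishing the lemma.
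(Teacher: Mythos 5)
Your proof is correct, and its skeleton is the same as the paper's: the easy inequality via self-adjointness of $Q^n_\alpha$ plus H\"older, and the reverse inequality (with the minimizer) via Hahn--Banach extension and Riesz representation in the full space $L^p(\C^n,\lambda^n)$. The one place you diverge is the verification that the Riesz representative satisfies $Q^n_\alpha\tilde F=H$: you test $(G,Q^n_\alpha\tilde F)_\lambda=(G,H)_\lambda$ only for sections $G\in\mathcal{S}^n_{p,\alpha}$ and then invoke injectivity of the identification $(\mathcal{S}^n_{p,\alpha})^\ast\cong\mathcal{S}^n_{p',\alpha}$, whereas the paper runs the chain $(G,Q^n_\alpha\tilde F)_\lambda=(Q^n_\alpha G,\tilde F)_\lambda=\tilde\Lambda(Q^n_\alpha G)=\Lambda(Q^n_\alpha G)=(Q^n_\alpha G,H)_\lambda=(G,H)_\lambda$ for \emph{all} $G\in L^p(\C^n,\lambda^n)$, exploiting that $Q^n_\alpha G$ is always a section, so that equality $Q^n_\alpha\tilde F=H$ follows from the elementary faithfulness of the full $L^p$--$L^{p'}$ Lebesgue pairing. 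The trade-off: the paper's version is self-contained at that step, while yours leans on the injectivity of the section-space duality, a genuinely nontrivial input (it ultimately rests on the reproducing kernel identity \eqref{e.Pn.rk}, equivalently the lower bound in \eqref{e.norm.equiv}); since the paper asserts that identification just before the lemma and it is established independently of this lemma, your appeal to it is legitimate and introduces no circularity, but it is worth recognizing that the paper's test-against-all-of-$L^p$ trick lets one avoid that heavier input entirely.
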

	\begin{proof}[Proof of Lemma \ref{l.2.2}]
	Let $H\in\mathcal{S}^n_{p',\alpha}$ be arbitrary.  We prove the first equation of the lemma by showing that
	\begin{eqnarray}
	\|(\cdot,H)_\lambda\|_{(\mathcal{S}^n_{p,\alpha})^*}&\leq&\inf_{F\in (Q^n_\alpha)^{-1} H} \|F\|_{p', \lambda}, \quad \text{and}\label{inflemma3}\\
	\|(\cdot,H)_\lambda\|_{(\mathcal{S}^n_{p,\alpha})^*}&\geq&\inf_{F\in (Q^n_\alpha)^{-1} H} \|F\|_{p', \lambda}\label{inflemma4}.
	\end{eqnarray}
	
	We first prove \eqref{inflemma3}. Let $F\in (Q^n_\alpha)^{-1} H$ be arbitrary.  Then
	\begin{eqnarray*}
\|(\cdot,H)_\lambda\|_{(\mathcal{S}^n_{p,\alpha})^*}&=&\sup_{G\in\mathcal{S}^n_{p,\alpha}}\frac{|(G,H)_\lambda|}{\|G\|_{p,\lambda}}=\sup_{G\in\mathcal{S}^n_{p,\alpha}}\frac{|(G,Q^n_\alpha F)_\lambda|}{\|G\|_{p,\lambda}}=\sup_{G\in\mathcal{S}^n_{p,\alpha}}\frac{|(Q^n_\alpha G, F)_\lambda|}{\|G\|_{p,\lambda}}
= \sup_{G\in\mathcal{S}^n_{p,\alpha}}\frac{|(G, F)_\lambda|}{\|G\|_{p,\lambda}}\\
&\leq& \sup_{G\in L^p(\C^n,\lambda^n)}\frac{|(G, F)_\lambda|}{\|G\|_{p,\lambda}}=\|F\|_{p', \lambda}.
\end{eqnarray*}
	Since $F\in (Q^n_\alpha)^{-1} H$ was arbitrary, we have proven \eqref{inflemma3}.

	To prove \eqref{inflemma4}, define the linear functional 		$\Lambda:\mathcal{S}^n_{p,\alpha}\to\C$ as
	$$\Lambda(G)=(G,H)_\lambda.$$
	Note that $\|(\cdot,H)_\lambda\|_{(\mathcal{S}^n_{p,\alpha})^*}=\|\Lambda\|$, so that $\Lambda$ is bounded.  By the Hahn-Banach theorem there is a linear functional $\tilde{\Lambda}:L^p(\C^n,\lambda^n)\to\C$ that extends $\Lambda$ without increasing its norm.  As $\tilde{\Lambda}\in (L^p(\C^n,\lambda))^*$, there exists a function $\tilde{F}\in L^{p'}(\C^n,\lambda^n)$ such that 
$$\tilde{\Lambda}(G)=(G,\tilde{F})_\lambda.$$
First note that for any $G\in L^{p}(\mathbb{C}^n,\lambda^n)$ we have 
$$(G,Q^n_\alpha \tilde{F})_\lambda=(Q^n_\alpha G,\tilde{F})_\lambda=\tilde{\Lambda}(Q^n_\alpha G)=\Lambda(Q^n_\alpha G)=(Q^n_\alpha G, H)_\lambda=(G,H)_\lambda,$$
proving $\tilde{F}\in (Q^n_\alpha)^{-1} H$. Then
\begin{equation*}
\|(\cdot,H)_\lambda\|_{(\mathcal{S}^n_{p,\alpha})^*}=\|\tilde{\Lambda}\|=\sup_{G\in L^p(\mathbb{C}^n,\lambda^n)}\frac{|(G, \tilde{F})_\lambda|}{\|G\|_{p,\lambda}}=\|\tilde{F}\|_{p', \lambda}\geq \inf_{G\in (Q^n_\alpha)^{-1} H} \|G\|_{p',\lambda},
\end{equation*}
proving \eqref{inflemma4}.  Combining \eqref{inflemma3} and the preceding inequality, we see that $\|\tilde{F}\|_{p',\lambda}=\inf_{G\in (Q^n_\alpha)^{-1} H} \|G\|_{p', \lambda}$, completing the lemma.
	\end{proof}
	We can now provide a proof for Lemma \ref{l.2.1}:
	
	\begin{proof}[Proof of Lemma \ref{l.2.1}]
	For $g\in L^p(\gamma^n_{\alpha p/2})$ and $h\in L^{p'}(\gamma^n_{\alpha p'/2})$, a straightforward calculation reveals that
	\begin{equation}
\langle g,h\rangle_{\alpha}=C_p^n\cdot (\mathfrak{g}^n_{p,\alpha}g,\mathfrak{g}^n_{p',\alpha}h)_\lambda\label{inflemma5}.
	\end{equation}
	Note that the constant $C_p^n$ pops up above since we are combining two different isometries:  $\mathfrak{g}^n_{p,\alpha}$ and $\mathfrak{g}^n_{p',\alpha}$. A straightforward combination of \eqref{inflemma5} and Lemma \ref{l.2.2} completes the proof.
	\end{proof}

	\begin{remark}\label{r.2.1}
	Before moving on to a proof of Theorem \ref{t.1}, we note here that we can use Lemma \ref{l.2.1} to rederive \eqref{e.norm.equiv}.  That is, the inequality 
	\begin{equation}  
	\|h\|_{p'\!,\alpha} \le \| \langle \cdot,h\rangle_\alpha\|_{p,\alpha}^\ast \le C_p^n  \|h\|_{p'\!,\alpha}\nonumber. 
	\end{equation} 
	Let $h\in\mathcal{H}_{p,\alpha}^n$ be arbitrary. For the first inequality, note that for any $f\in (P^n_\alpha)^{-1} h$ 
	\begin{equation}\label{i.2.1}
	\frac{\|h\|_{p'\!,\alpha}}{\|P^n_\alpha\|_{p'\to p'}}\leq \|f\|_{p'\!,\alpha}.
	\end{equation}
	Thus, 
$$\frac{\|h\|_{p'\!,\alpha}}{\|P^n_\alpha\|_{p'\to p'}}\leq \inf_{f\in P_\alpha^{-1} h}\|f\|_{p'\!,\alpha}.$$
	Also, $h\in P_\alpha^{-1} h$, so that 
$$\inf_{f\in P_\alpha^{-1} h} \|f\|_{p'\!, \alpha}\leq \|h\|_{p'\!,\alpha}.$$
	Putting these inequalities together gives us
	\begin{equation}
\frac{\|h\|_{p'\!,\alpha}}{\|P^n_\alpha\|_{p'\to p'}}\leq\inf_{f\in P_\alpha^{-1} h} \|f\|_{p'\!, 	\alpha}\leq \|h\|_{p'\!,\alpha}.\nonumber
	\end{equation}
	Using Lemma \ref{l.2.1} and the fact that $\|P^n_\alpha\|_{p'\to p'}=C_p^n$ (from \cite{GrycKemp2011}) in the above equation gives us
 	\begin{equation}
\frac{\|h\|_{p'\!,\alpha}}{C_p^n}\leq \frac{\|\langle\cdot,h\rangle_\alpha\|_{p,\alpha}^\ast}{C_p^n}\leq \|h\|_{p'\!,\alpha}.\nonumber
	\end{equation}
	Multiplying the above by $C_p^n$ gives us a proof of \eqref{e.norm.equiv}. 
	\end{remark}	
	\subsection{Proof of Theorem \ref{t.1} Using Lemma \ref{l.2.1}}
	We are now ready to prove Theorem \ref{t.1}.
	We first prove
	\begin{equation}\label{inf1}
	\inf_{h\in\mathcal{H}^n_{p'\!,\alpha}\setminus\{0\}} \frac{  \| \langle \cdot,h\rangle_\alpha\|_{p,\alpha}^\ast}{ \|h\|_{p'\!,\alpha} } = 1
	\end{equation}
	and that this infimum is achieved.  Note that by the proof of \eqref{e.norm.equiv} in Remark \ref{r.2.1}, to prove equality in \eqref{inf1} it suffices to show that there exists some $h\in\mathcal{H}_{p,\alpha}^n$ and $f\in (P^n_\alpha)^{-1} h$ such that there is equality in \eqref{i.2.1}. That is
	\begin{equation}\label{i.2.2}
	\frac{\|h\|_{p'\!,\alpha}}{C_p^n}=\|f\|_{p'\!,\alpha}.
	\end{equation}
	Let $f(z)=\left(\frac{p\alpha}{2\pi}\right)^{p/n}e^{-\frac{\alpha}{2}|z|^2}$; then $P^n_\alpha f \equiv 1 \equiv h^\alpha_0$ (the $z=0$ case of the function $h^\alpha_z(w)=e^{\alpha\langle w,z\rangle}$).  A straightforward computation shows that $h=h_0^\alpha$ and $f$ satisfy \eqref{i.2.2}.  This proves \eqref{inf1}.
	
	Now, using \eqref{e.Pn.rk}, we have $\langle g,h_0^\alpha\rangle_\alpha = g(0)$.
	We just showed that $\| \langle \cdot,h_0\rangle_\alpha\|_{p,\alpha}^\ast=\|h_0\|_{p'\!,\alpha}$, which means that the following inequality is sharp:
	\begin{equation}\label{e.pointwise0}
	|g(0)|\leq \|g\|_{p,\alpha} \qquad \text{for all} \qquad g\in \mathcal{H}^n_{p,\alpha}.
	\end{equation}
	Let $z\in\C^n$ be arbitrary.  Let $g\in \mathcal{H}^n_{p,\alpha}$ be arbitrary.  Define a new function $g_z(w)=g(z+w)e^{-\alpha\langle w+z,z\rangle}$.  Note that $g_z$ is holomorphic and
	\begin{eqnarray*}
	\|g_z\|^p_{p,\alpha}&=&\left(\frac{\alpha p}{2\pi}\right)^n\int_{\C^n} 	|g(z+w)e^{-\alpha\langle w+z,z\rangle}|^p e^{-\alpha p|w|^2/2}\,\lambda^n(dw)\\
	&=&\left(\frac{\alpha p}{2\pi}\right)^n \int_{\C^n} |g(y)e^{-\alpha\langle y,z\rangle}|^p 	e^{-\alpha p|y-z|^2/2}\,\lambda^n(dy)\\
	&=&e^{-\alpha p|z|^2/2}\int_{\C^n} |g(y)|^p \left(\frac{\alpha p}{2\pi}\right)^n  e^{-\alpha 	p|y|^2/2}\,\lambda^n(dy)
	=e^{-\alpha p|z|^2/2}\|g\|^p_{p,\alpha}<\infty,
	\end{eqnarray*}
	proving that $g_z\in\mathcal{H}^n_{p,\alpha}$.  Applying \eqref{e.pointwise0} to $g_z$ yields the inequality
	\begin{equation}\label{e.pointwise1}
	|g(z)|\leq e^{\alpha |z|^2/2}\|g\|_{p,\alpha}\mbox{ for all $g\in \mathcal{H}^n_{p,\alpha}$.}
	\end{equation}
	A straightforward calculation shows that the inequality \eqref{e.pointwise1} is an equality when $g=h_z^\alpha$, proving the inequality sharp.  The sharpness of \eqref{e.pointwise1} proves that $\|\langle\cdot, h_z^\alpha \rangle_\alpha\|_{p,\alpha}^\ast=e^{\alpha |z|^2/2}=\|h_z^\alpha\|_{p'\!,\alpha}$, proving the infimum \eqref{inf1} is achieved at each $h_z^\alpha$ and completing the proof of Theorem \ref{t.1}.

\section{The Strictness of H\"{o}lder's Inequality and a Lower Bound for the Sharp Upper Constant}\label{s.3}
As Theorem \ref{t.1} is proven, we know that the left-hand inequality of \eqref{e.norm.equiv} is sharp.  For the remainder of the paper, we will consider the right-hand inequality, that is
\begin{equation}\label{e.norm.equiv.right}
\| \langle \cdot,h\rangle_\alpha\|_{p,\alpha}^\ast \le C_p^n  \|h\|_{p'\!,\alpha}.
\end{equation}
As we stated in the introduction, we do not know whether \eqref{e.norm.equiv.right} is sharp, but Theorems \ref{t.2}, \ref{t.3}, \ref{t.4.new} and \ref{t.5} suggest that it is not sharp.  We presently prove Theorems \ref{t.2} and \ref{t.3}.

\subsection{The Proof of Theorem \ref{t.2}}

Here will prove that H\"{o}lder's inequality is not sharp in the Segal-Bargmann spaces.  Let $1<p<\infty$, $p\neq 2$, $g\in\mathcal{H}^n_{p,\alpha}$, and $h\in\mathcal{H}^n_{p',\alpha}$, neither identically $0$.  We will proceed by contradiction.  That is, suppose that $g$ and $h$ give equality in H\"{o}lder's inequality (modified by the constant $C_p^n$ to account for the scaling of the spaces $\mathcal{H}^n_{p,\alpha}$).  Thus,
\begin{eqnarray}
|\langle g, h\rangle_\alpha|
&\leq&  \int_{\C^n} |g(z)\overline{h(z)}|\,\gamma^n_\alpha(dz)\label{holder1}\\
&=& \left(\frac{\alpha}{\pi}\right)^n\int_{\C^n} |g(z)e^{-\alpha |z|^2/2}||h(z)e^{-\alpha |z|^2/2}|\,\lambda^n(dz)\nonumber\\
&\leq& \left(\frac{\alpha}{\pi}\right)^n \left(\int_{\C^n}|g(z)e^{-\alpha |z|^2/2}|^p\,\lambda^n(dz)\right)^{1/p}\left(\int_{\C^n}|h(z)e^{-\alpha |z|^2/2}|^{p'}\,\lambda^n(dz)\right)^{1/p'}\label{holder2}\\
&=& \left(\frac{\alpha}{\pi}\right)^n\left(\frac{2\pi}{p\alpha}\right)^{n/p}\left(\frac{2\pi}{p'\alpha}\right)^{n/p'}\|g\|_{L^{p}(\gamma_{\alpha p/2})}\|h\|_{L^{p'}(\gamma_{\alpha p'/2})}\nonumber\\
&=& C_p^n\,\|g\|_{L^{p}(\gamma_{\alpha p/2})}\|h\|_{L^{p'}(\gamma_{\alpha p'/2})}=|\langle g, h\rangle_\alpha|\nonumber,
\end{eqnarray}
proving that both \eqref{holder1} and \eqref{holder2} are actually equalities.  For equality in \eqref{holder2}, we must have
$$|g(z)e^{-\alpha |z|^2/2}|^p=\beta^p|h(z)e^{-\alpha |z|^2/2}|^{p'}$$
for some $\beta>0$.  Rearranging the above gives us
\begin{equation}\label{holder3}
|g(z)|=\beta|h(z)|^{{p'}/p}e^{-\frac{\alpha(p'-p)}{2p}|z|^2}.
\end{equation}
For \eqref{holder1} to be an equality, we must have
\begin{equation}\label{holder4}
g(z)\overline{h(z)}=e^{i\theta_0} f(z),
\end{equation}
where $\theta_0\in [0,2\pi]$ and $f$ is a nonnegative real-valued function.  By replacing $g(z)$ with $\beta^{-1} e^{-i\theta_0}g(z)$, we preserve holomorphicity and the finiteness of the $\|\cdot\|_{p,\alpha}$-norm.  Thus, without loss of generality, we may assume that $e^{i\theta_0}=\beta=1$, and replace Equations \eqref{holder3} and \eqref{holder4} with 
\begin{equation}\label{holder5}
|g(z)|=|h(z)|^{{p'}/p}e^{-\frac{\alpha(p'-p)}{2p}|z|^2},
\end{equation}
and
\begin{equation}\label{holder6}
g(z)\overline{h(z)}=f(z),\mbox{ where $f$ is non-negative real-valued.}
\end{equation}

Since $g,h$ are holomorphic and not identically $0$, they are each non-zero on an open dense subset of $\C^n$; thus, there is an open set $U$ where neither $g$ nor $h$ vanishes.  Then $g/h$ is holomorphic on $U$, and
\[ \frac{g(z)}{h(z)} = \frac{g(z)\overline{h(z)}}{|h(z)|^2} = \frac{f(z)}{|h(z)|^2} >0 \quad \text{for} \quad z\in U.\]
Thus, $g/h$ is a positive holomorphic function, and so it is equal to a positive constant $c$ on $U$.   Equation\ \eqref{holder5} then shows that
\[ c|h(z)| = |h(z)|^{p'/p}e^{-\frac{\alpha(p'-p)}{2p}|z|^2}. \]
Solving for $|h(z)|$ above and raising each side to the $\frac{p}{p-p'}$ power (which is possible as $p\neq 2$ and thus $p\neq p'$) gives
\[ |h(z)| = c_1 e^{\frac{\alpha}{2}|z|^2}, \qquad c_1 = c^{\frac{1}{p'/p-1}}. \]
Fix any point $z=(z_1,\ldots,z_n)\in U$; then there is some disk $D\subset\C$ such that $\{(\zeta,z_2,\ldots,z_n)\colon \zeta\in D\}\subset U$.  Thus the function $h_1(\zeta)=h(\zeta,z_2,\ldots,z_n)$ is holomorphic and non-vanishing on $D$, and we have
\[ |h_1(\zeta)| = c_1e^{\frac{\alpha}{2}(|\zeta|^2 + |z_2|^2+\cdots + |z_n|^2)}. \]
The function $h_2(\zeta) = c_1^{-1}e^{-\frac{\alpha}{2}(|z_2|^2+\cdots+|z_n|^2)}h_1(\zeta)$ is therefore holomorphic and non-vanishing on $D$, and $|h_2(\zeta)| =e^{\frac{\alpha}{2}|\zeta|^2}$.  It follows that $h_1$ has a holomorphic logarithm $\ell$ on $D$, so
\[ e^{\frac{\alpha}{2}|\zeta|^2} = |h_2(\zeta)| = |e^{\ell(\zeta)}| = e^{\Re\ell(\zeta)}, \quad z\in D. \]
As $\exp$ is one-to-one on $\R$, it follows that $\Re\ell(\zeta) = \frac{\alpha}{2}|\zeta|^2$ for $\zeta\in D$.  This is impossible, since $\ell$ is holomorphic, but $\zeta\mapsto\frac{\alpha}{2}|\zeta|^2$ is not harmonic. This concludes the proof.

\subsection{The Proof of Theorem \ref{t.3}\label{s.proof of t.3}}

As in the Introduction, define $\mathcal{R}_{p,\alpha}(g,h)$ as 
\[ \mathcal{R}_{p,\alpha}(g,h) = \frac{ |\langle g,h\rangle_\alpha | } { \|g\|_{p,\alpha} \|h\|_{p'\!,\alpha} }. \]
Note that the sharp constant for \eqref{e.norm.equiv.right} is equal to $\sup_{g\in \mathcal{H}^n_{p,\alpha}\setminus\{0\}}\; \sup_{h\in \mathcal{H}^n_{p'\!,\alpha}\setminus\{0\}} \mathcal{R}_{p,\alpha}(g,h)$, hence our interest in this ratio.  Theorem \ref{t.3} concerns bounds on this ratio; namely that  \eqref{e.sharp?}, reproduced below, holds:
\begin{equation}C_p^{n/2} \le \sup_{g\in \mathcal{H}^n_{p,\alpha}\setminus\{0\}}\; \sup_{h\in \mathcal{H}^n_{p'\!,\alpha}\setminus\{0\}} \mathcal{R}_{p,\alpha}(g,h) \le C_p^n. \nonumber\end{equation}

There are many ways to prove the right-hand side of \eqref{e.sharp?}. In particular, we can rewrite Theorem \ref{t.2} in terms of $\mathcal{R}_{p,\alpha}(g,h)$ to say that for any $g\in\mathcal{H}^n_{p,\alpha}$ and $h\in\mathcal{H}^n_{p',\alpha}$ we have
\begin{equation}
\mathcal{R}_{p,\alpha}(g,h)< C_p^n.\nonumber
\end{equation}
By the above, we have
\begin{equation}
\sup_{g\in \mathcal{H}^n_{p,\alpha}\setminus\{0\}}\; \sup_{h\in \mathcal{H}^n_{p'\!,\alpha}\setminus\{0\}} \mathcal{R}_{p,\alpha}(g,h)\leq C_p^n.
\end{equation}
Thus, we need only prove the left-hand inequality of \eqref{e.sharp?}.  To that end, we will consider the case where $g$ and $h$ are monomials.  Note that (by the rotational invariance of $\gamma^n_\alpha$) distinct monomials are orthogonal, so we will consider only $g=h$.  For $k_1,\ldots,k_n\in\N$, define the $g_{k_1,k_2,\ldots,k_n}(z)\equiv z_1^{k_1}z_2^{k_2}\ldots z_n^{k_n}$.  Note that
\begin{eqnarray*}
\|g_{k_1,k_2,\ldots,k_n}\|_{p, \alpha}^p&=& \left(\frac{\alpha p}{2\pi}\right)^n\int_{\mathbb{C}^n} |z_1|^{k_1p}|z_2|^{k_2p}\ldots |z_n|^{k_np} e^{-(\alpha p/2) (|z_1|^2+|z_2|^2+\ldots+|z_n|^2)} \lambda^n(dz)\\
&=&\left(\frac{\alpha p}{2\pi}\right)^n\prod_{j=1}^n \int_{\mathbb{C}}|z_j|^{pk_j}e^{-(\alpha p/2)|z_j|^2}\,\lambda(dz_j)\\
&=&\prod_{j=1}^n \|g_{k_j}\|_{p, \alpha}^p
\end{eqnarray*}
where $g_k:\C\to\C$ is given by $g_k(z)=z^k$.  Note, then, that
\begin{equation} \label{e.tensor.g} \mathcal{R}_{p,\alpha}(g_{k_1,\ldots,k_n},g_{k_1,\ldots,k_n}) = \prod_{j=1}^n \mathcal{R}_{p,\alpha}(g_{k_j},g_{k_j}). \end{equation}
Hence, to prove the left-hand side of \eqref{e.sharp?}, it suffices to show that
\begin{equation} \label{e.sharp?2} \sup_{k\in\N} \mathcal{R}_{p,\alpha}(g_k,g_k)= C_p^{1/2}. \end{equation}
As usual, denote the Gamma function $\Gamma(z)$ as
$$\Gamma(z)=\int_0^\infty t^{z-1}e^{-t}dt, \qquad \Re (z)>0.$$
Then, using polar coordinates, we have 
\begin{align*}
\|g_k\|_{p, \alpha}^p &= \left(\frac{\alpha p}{2\pi}\right)\int_{\mathbb{C}} |z|^{kp} e^{-(\alpha p/2) |z|^2}\,\lambda(dz)
= \left(\frac{\alpha p}{2\pi}\right)\int_0^\infty r\int_{S^{1}} r^{kp}e^{-(\alpha p/2) r^2}dSdr \\
&= (\alpha p) \int_0^\infty (r^2)^{kp/2}re^{-(\alpha p/2) r^2}dr.
\end{align*}
Using the substitution $u=\frac{\alpha p}{2}r^2$ yields
\begin{eqnarray*}
\|g_k\|_{p, \alpha}^p&=& \int_0^\infty (r^2)^{kp/2}e^{-(\alpha p/2) r^2}(\alpha p)rdr\\
&=& \left(\frac{2}{\alpha p}\right)^{kp/2} \int_0^\infty u^{kp/2}e^{-u}du = \left(\frac{2}{\alpha p}\right)^{kp/2}\Gamma(kp/2+1).
\end{eqnarray*}
Thus, we have
\begin{align} \nonumber
\mathcal{R}_{p,\alpha}(g_k,g_k)=\frac{|\langle g_k,g_k\rangle_\alpha|}{\|g_k\|_{p, \alpha}\|g_k\|_{p'\!, \alpha}}&=\frac{\|g_k\|^2_{2,\alpha}}{\|g_k\|_{p, \alpha}\|g_k\|_{p'\!, \alpha}} \\
&=\left(\frac{pp'}{4}\right)^{k/2}\frac{\Gamma(k+1)}{\Gamma(kp/2+1)^{1/p}\Gamma(kp'/2+1)^{1/p'}}. \label{e.Rp1}
\end{align}
Using the Gamma function relation $\Gamma(z+1)=z\Gamma(z)$, it is convenient to express this ratio as
\begin{align} \nonumber \frac{\Gamma(k+1)}{\Gamma(kp/2+1)^{1/p}\Gamma(kp'/2+1)^{1/p'}} &= \frac{k}{(kp/2)^{1/p}(kp'/2)^{1/p'}}\cdot\frac{\Gamma(k)}{\Gamma(kp/2)^{1/p}\Gamma(kp'/2)^{1/p'}} \\
&= C_p\cdot \frac{\Gamma(k)}{\Gamma(kp/2)^{1/p}\Gamma(kp'/2)^{1/p'}}. \label{e.Rp2}
\end{align}

To properly analyze this expression, we will use a precise form of Stirling's approximation for the Gamma function: for any $z\in\C$ with $\Re (z)>0$,
\begin{equation} \label{e.newS} S(z)\equiv \ln\left(\sqrt{\frac{z}{2\pi}}\left(\frac{e}{z}\right)^z\Gamma(z)\right) = \int_0^\infty\frac{2\arctan(t/z)}{e^{2\pi t}-1}\,dt. \end{equation}
See, for example, \cite[(6.1.50)]{AbramowitzStegun}. Thus, we can express the Gamma function precisely as
\begin{equation} \label{e.Gamma.S} \Gamma(z) = \sqrt{2\pi}\,z^{z-\frac12}\,e^{S(z)-z}. \end{equation}
With this in hand, together with \eqref{e.Rp1} and \eqref{e.Rp2}, we have the following expression for $\mathcal{R}_{p,\alpha}(g_k,g_k)$.
\begin{align*} \mathcal{R}_{p,\alpha}(g_k,g_k) &= C_p\cdot \left(\frac{pp'}{4}\right)^{k/2}\frac{\Gamma(k)}{\Gamma(kp/2)^{1/p}\Gamma(kp'/2)^{1/p'}} \\
&= C_p\cdot \left(\frac{pp'}{4}\right)^{k/2}\frac{\sqrt{2\pi}k^{k-\frac12}e^{S(k)-k}}{\left(\sqrt{2\pi}(kp/2)^{kp/2-\frac12} e^{S(kp/2)-kp/2}\right)^{1/p}\left(\sqrt{2\pi}(kp'/2)^{kp'/2-\frac12} e^{S(kp'/2)-kp'/2}\right)^{1/p'}} \\
&= C_p\cdot\left(\frac{pp'}{4}\right)^{k/2}\frac{k^{k-\frac12}}{(kp/2)^{k/2-1/2p}(kp'/2)^{k/2-1/2p'}}e^{S(k)-\frac{1}{p}S(kp/2)-\frac{1}{p'}S(kp'/2)} \\
&= C_p^{1/2}\cdot e^{S(k)-\frac{1}{p}S(kp/2)-\frac{1}{p'}S(kp'/2)}.
\end{align*}

Thus, to prove \eqref{e.sharp?2} and thus Theorem \ref{t.3}, it suffices to prove the following proposition.

\begin{proposition} \label{p.Stirling} For any $p\in(1,\infty)\setminus\{2\}$ and any $k\in\N$,
\[ S(k)-\frac{1}{p}S(kp/2)-\frac{1}{p'}S(kp'/2) < 0. \]
Moreover, the limit of this expression as $k\to\infty$ is $0$.
\end{proposition}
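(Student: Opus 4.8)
The plan is to recognize the displayed expression as the (negative) Jensen gap of the function $S$ evaluated at a carefully chosen convex combination, and then to prove that $S$ is strictly convex on $(0,\infty)$. The key algebraic observation is that, because $\frac1p+\frac1{p'}=1$ and $\frac1p\cdot\frac p2+\frac1{p'}\cdot\frac{p'}2=\frac12+\frac12=1$, the point $k$ is \emph{exactly} the convex combination
\[ k=\frac1p\cdot\frac{kp}2+\frac1{p'}\cdot\frac{kp'}2 \]
of the two points $\frac{kp}2$ and $\frac{kp'}2$, with weights $\frac1p,\frac1{p'}\in(0,1)$. Consequently, once $S$ is known to be strictly convex, the strict Jensen inequality
\[ S(k)<\frac1p\,S\!\left(\tfrac{kp}2\right)+\frac1{p'}\,S\!\left(\tfrac{kp'}2\right) \]
is precisely the claimed bound. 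The strictness is legitimate because $p\neq2$ forces $p\neq p'$, so the two points $\frac{kp}2$ and $\frac{kp'}2$ are distinct.

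To establish strict convexity of $S$, I would differentiate twice under the integral sign in the representation $S(z)=\int_0^\infty\frac{2\arctan(t/z)}{e^{2\pi t}-1}\,dt$. For fixed $t>0$ a direct computation gives $\frac{\partial}{\partial z}\arctan(t/z)=-\frac{t}{z^2+t^2}$ and $\frac{\partial^2}{\partial z^2}\arctan(t/z)=\frac{2tz}{(z^2+t^2)^2}>0$ for $z>0$; that is, $z\mapsto\arctan(t/z)$ is strictly convex for each fixed $t>0$. Since the weight $\frac{2}{e^{2\pi t}-1}$ is positive and the integrand together with its $z$-derivatives is dominated, uniformly for $z$ in compact subsets of $(0,\infty)$, by integrable functions of $t$ (the exponential denominator guarantees decay at infinity), differentiation under the integral is justified and yields
\[ S''(z)=\int_0^\infty\frac{2}{e^{2\pi t}-1}\cdot\frac{2tz}{(z^2+t^2)^2}\,dt>0, \]
so $S$ is strictly convex on $(0,\infty)$. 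This completes the proof of the strict inequality.

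For the limiting statement, I would use the elementary bound $0\le\arctan(t/z)\le t/z$ for $t,z>0$ to obtain $0\le S(z)\le\frac1z\int_0^\infty\frac{2t}{e^{2\pi t}-1}\,dt=O(1/z)$ as $z\to\infty$, the integral being a finite constant. Hence each of $S(k)$, $S(kp/2)$, $S(kp'/2)$ tends to $0$ as $k\to\infty$, and since $\frac1p+\frac1{p'}=1$ the entire linear combination tends to $0$. The argument is short, and the only points requiring care are the justification of differentiation under the integral and the verification that the convex-combination identity holds exactly; neither is a serious obstacle, so the main content is simply spotting the Jensen structure.
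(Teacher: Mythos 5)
Your proposal is correct and follows essentially the same route as the paper: both proofs exploit the exact convex-combination identity $k=\frac1p\cdot\frac{kp}2+\frac1{p'}\cdot\frac{kp'}2$ together with the strict convexity of $z\mapsto\arctan(t/z)$ under the integral sign defining $S$ (the paper applies strict Jensen pointwise in $t$ and then integrates, which sidesteps justifying differentiation under the integral, while you prove $S''>0$ directly; the second-derivative computation is identical). Your $O(1/z)$ bound via $\arctan(t/z)\le t/z$ for the limiting statement is a fine, slightly more quantitative substitute for the paper's dominated convergence argument.
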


\begin{proof} Denote the integrand of $S(x)$ as $s(t,x)$:
\[ s(t,x) = \frac{2\arctan(t/x)}{e^{2\pi t}-1}. \]
Note that $s\in C^\infty((0,\infty)^2)$; the first two $x$ derivatives are as follows:
\[ \frac{\del s}{\del x}(t,x) = -\frac{1}{t^2+x^2} \frac{2t}{e^{2\pi t}-1}, \qquad \frac{\del^2 s}{\del x^2}(t,x) = \frac{x}{(t^2+x^2)^2}\frac{4t}{e^{2\pi t}-1}. \]
Thus, for each $t>0$, $x\mapsto s(t,x)$ is strictly convex on $(0,\infty)$.  In particular, since $\frac{1}{p}+\frac{1}{p'}=1$ and $\frac{1}{p},\frac{1}{p'}\in(0,1)$, and since $p\ne p'$, we have
\[ s(t,x) = s\left(t,\frac{1}{p}\frac{xp}{2}+\frac{1}{p'}\frac{xp'}{2}\right) < \frac{1}{p}s\left(t,\frac{xp}{2}\right)+\frac{1}{p'}s\left(t,\frac{xp'}{2}\right). \]
Since $t\mapsto s(t,x)$ is strictly positive, upon integration this inequality remains strict, and so
\[ S(x) = \int_0^\infty s(t,x)\,dt < \int_0^\infty \frac{1}{p}s\left(t,\frac{xp}{2}\right)\,dt + \int_0^\infty \frac{1}{p'}s\left(t,\frac{xp'}{2}\right)\,dt = \frac{1}{p}S(xp/2)+\frac{1}{p'}S(xp'/2). \]
Taking $x=k\in\N$ proves the first statement of the proposition.

For the second statement, it suffices to show that $\lim_{x\to\infty} S(x)=0$.  As computed above, $\frac{\del s}{\del x}(t,x)<0$, and so $x\mapsto s(t,x)$ is decreasing; in particular, for $x\ge 1$ the integrand is $\le \frac{2\arctan(t)}{e^{2\pi t}-1}$, which is an $L^1(0,\infty)$ function.  Since $\lim_{x\to\infty}\arctan(t/x)=0$ for each fixed $t$, it follows from the Dominated Convergence Theorem that $\lim_{x\to\infty} S(x)=0$, completing the proof. \end{proof}

\begin{remark} \label{r.newS} \begin{itemize} \item[(1)] Note, from (\ref{e.newS}), the statement $\lim_{x\to\infty} S(x)=0$ is (up to a logarithm) precisely the usual statement of Stirling's approximation:
\[ 1= \lim_{x\to\infty} \frac{\Gamma(x)}{\sqrt{\frac{2\pi}{x}}\left(\frac{z}{e}\right)^z} = \lim_{x\to\infty} e^{S(x)}. \]
We include the Dominated Convergence Theorem proof above just for completeness.
\item[(2)] The above computations are only valid for $k>0$.  However, it is easy to check that $\mathcal{R}_{p,\alpha}(g_0,g_0) = 1 < C_p^{1/2}$, since $g_0=1$.
\end{itemize}
\end{remark}

Thus, we have completed the proof of Theorem \ref{t.3}.
Let us also note, for use in the next section, that Proposition \ref{p.Stirling} actually shows that, for each $k\in\N$,
\begin{equation} \label{e.gammaN} \mathcal{R}_{p,\alpha}(g_k,g_k) < C_p^{1/2}. \end{equation}
\section{Monomials and the Ratio $\mathcal{R}_{p,\alpha}(g,h)$ \label{s.local}}
Equation \eqref{e.gammaN} suggests that $C_p^{n/2}$ is actually the supremum of the ratio $\mathcal{R}_{p,\alpha}(g,h)$.  One way to prove that this would be to show that the ratio $\mathcal{R}_{p,\alpha}$ is maximized in some sense on monomials. In this section we will prove a partial result in this vein (Theorem \ref{lemma3}) which has two interesting corollaries.  We begin by stating a result that will be useful in what follows.

\begin{lemma}\label{lemma0}
Let $1<p<\infty$ and $\a>0$. If $f\in\mathcal{H}^1_{p,\alpha}$, then the Taylor series of $f$ centered at $0$ converges to $f$ in the $L^p(\gamma^1_{\a p/2})$ norm.
\end{lemma}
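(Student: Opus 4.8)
The plan is to pass to polar coordinates and recognize the Taylor partial sums of $f$ as the one-sided Fourier partial sums of its angular restrictions, thereby reducing the claim to the classical $L^p$-convergence of Fourier series on the circle. Write $S_N f(z) = \sum_{k=0}^N a_k z^k$ for the $N$-th Taylor partial sum, and recall from \eqref{e.Hpa} that $\gamma^1_{\alpha p/2}(dz) = \frac{\alpha p}{2\pi}e^{-\alpha p|z|^2/2}\lambda(dz)$. Passing to polar coordinates $z = re^{i\phi}$,
\[ \|f - S_N f\|_{p,\alpha}^p = \frac{\alpha p}{2\pi}\int_0^\infty\Bigl(\int_0^{2\pi}\bigl|f(re^{i\phi}) - S_N f(re^{i\phi})\bigr|^p\,d\phi\Bigr)e^{-\alpha p r^2/2}\,r\,dr. \]
For each fixed $r>0$, set $g_r(\phi) = f(re^{i\phi})$. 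Since $f$ is holomorphic with $f = \sum_{k\ge 0} a_k z^k$, the Fourier expansion of $g_r$ is $g_r(\phi) = \sum_{k\ge 0} a_k r^k e^{ik\phi}$, so $S_N f(re^{i\phi}) = \sum_{k=0}^N a_k r^k e^{ik\phi}$ is precisely the one-sided Fourier partial sum of $g_r$.

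The key input is M.\ Riesz's theorem: for $1<p<\infty$ the Riesz projection $P_+$ onto nonnegative frequencies is bounded on $L^p$ of the circle. Consequently the one-sided partial-sum operators $S_N^+ g = \sum_{k=0}^N \hat g(k)e^{ik\phi}$ are bounded uniformly in $N$, as one sees by writing $S_N^+ = P_+ - M^{N+1}P_+ M^{-(N+1)}$, where $M$ denotes multiplication by $e^{i\phi}$ (an $L^p$-isometry) and $M^{N+1}P_+M^{-(N+1)}$ is the projection onto frequencies $\ge N+1$; moreover $S_N^+ g \to P_+ g$ in $L^p$ of the circle, by density of trigonometric polynomials together with the uniform bound. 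Since $g_r$ has no negative-frequency part, $P_+ g_r = g_r$, so for each fixed $r$ the inner integral tends to $0$ as $N\to\infty$. Simultaneously, the uniform bound yields $\int_0^{2\pi}|g_r - S_N^+ g_r|^p\,d\phi \le C\,\Phi(r)$ for a constant $C$ independent of $N$ and $r$, where $\Phi(r) = \int_0^{2\pi}|g_r|^p\,d\phi$.

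Finally I would apply the Dominated Convergence Theorem in the radial variable: the outer integrand is dominated by $C\,\Phi(r)e^{-\alpha p r^2/2}\,r$, whose integral against $\frac{\alpha p}{2\pi}\,dr$ is a fixed constant multiple of $\|f\|_{p,\alpha}^p<\infty$, while the integrand tends to $0$ pointwise in $r$ by the previous paragraph. Hence $\|f - S_N f\|_{p,\alpha}\to 0$, which is the claim. The main obstacle — indeed the only nontrivial analytic ingredient — is the uniform $L^p$-boundedness and convergence of the holomorphic (one-sided) Fourier partial sums, i.e.\ M.\ Riesz's theorem; this is exactly what forces the hypothesis $1<p<\infty$, since the corresponding boundedness fails at the endpoints $p=1$ and $p=\infty$. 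Everything else is Fubini--Tonelli bookkeeping and dominated convergence.
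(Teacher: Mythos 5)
Your proof is correct. One point of comparison worth knowing: the paper does not actually prove Lemma \ref{lemma0} at all --- it defers to the literature, citing \cite[Ex.\ 5, Ch.\ 2]{Zhu} --- so your argument supplies a proof where the paper has only a reference, and it is in fact the standard route to this result. The structure is sound at every step: the polar disintegration is justified by Tonelli since the integrand is nonnegative; the identification of $S_Nf$ on the circle $|z|=r$ with the one-sided Fourier partial sum of $g_r$ is legitimate because the Taylor series of the entire function $f$ converges uniformly on that circle, so $\hat g_r(k)=a_kr^k$ for $k\ge 0$ and $\hat g_r(k)=0$ for $k<0$; the operator identity $S_N^+=P_+-M^{N+1}P_+M^{-(N+1)}$ does give $\sup_N\|S_N^+\|_{p\to p}\le 2\|P_+\|_{p\to p}<\infty$ by M.\ Riesz, whence $S_N^+g\to P_+g$ in $L^p$ of the circle by density of trigonometric polynomials; and the dominated convergence step closes the argument since $C\,\Phi(r)e^{-\alpha pr^2/2}r$ integrates to a constant multiple of $\|f\|_{p,\alpha}^p<\infty$. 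What this transference argument buys is transparency about the role of the hypothesis $1<p<\infty$: it enters exactly once, through the boundedness of the Riesz projection, which is also why norm convergence of Taylor series is delicate at the endpoint $p=1$. The only cosmetic caveat is that your constant $C$ should be $(1+\sup_N\|S_N^+\|_{p\to p})^p$ rather than $\sup_N\|S_N^+\|_{p\to p}$ itself, since you are bounding $\|g_r-S_N^+g_r\|_{L^p}^p$ and not $\|S_N^+g_r\|_{L^p}^p$; this changes nothing in the argument.
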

The above result is well-known.  For example, it can be found as \cite[Ex. 5, Ch. 2]{Zhu}.  We next define a nonlinear operator $G^n_{p,\alpha}(h)$ that will be central in proving Theorem \ref{lemma3}, the main result of this section.  Let $1<p<\infty$.  For $h\in L^p(\gamma^n_{\alpha p/2})$, define $G^n_{p,\alpha}(h)$ as
\begin{equation} \label{eq G(h)} G^n_{p,\alpha}(h)(z) \equiv |h(z)|^{p-2}h(z)e^{-\alpha(\frac{p}{2}-1)|z|^2}. \end{equation}
First we note that, if $h\in L^p(\gamma^n_{\alpha p/2})$, then $G^n_{p,\alpha}(h)\in L^{p'}(\gamma^n_{\alpha p'/2})$:
\begin{eqnarray}
\|G^n_{p,\alpha}(h)\|_{p',\alpha} &=& \left(\left(\frac{\alpha p'}{2\pi}\right)^n\int_{\C^n} \left(|h(z)|^{p-2}|h(z)|e^{-\alpha\left(\frac{p}{2}-1\right)|z|^2}\right)^{p'}e^{-\frac{\alpha p'}{2}|z|^2}dz\right)^{1/p'}\nonumber\\
&=&\left(\left(\frac{\alpha p'}{2\pi}\right)^n\left(\frac{2\pi}{\alpha p}\right)^n\int_{\C^n} |h(z)|^{p}\left(\frac{\alpha p}{2\pi}\right)^ne^{-\frac{\alpha p}{2}|z|^2}dz\right)^{1/p'}\nonumber\\
&=&\left(\frac{p'}{p}\right)^{n/p'}\|h\|^{p/p'}_{p,\alpha}.\nonumber
\end{eqnarray}
In fact, the function $G^n_{p,\alpha}(h)$ has been designed to have the property that
\begin{equation} \label{eq G maximizer} \langle h, G^n_{p,\alpha}(h)\rangle_{\alpha} = \left(\frac{2}{p}\right)^n \|h\|_{p,\alpha}^{p}= C_{p}^n\|h\|_{p,\alpha}\|G^n_{p,\alpha}(h)\|_{p',\alpha} . \end{equation}
That is, it gives equality in H\"older's inequality (up to the scale constants required by the dilated Gaussian measures).  Furthermore, for all $g\in L^{p'}(\gamma^n_{\alpha p'/2})$ we have
\begin{equation}\label{eq G maximizer 2}
|\langle h,g\rangle_{\alpha}| = C_p^n\|h\|_{p,\alpha}\|g\|_{p',\alpha} \iff \mbox{$g$ is a constant multiple of $G^n_{p,\alpha}(h)$.}
\end{equation}
By Theorem \ref{t.2}, $G^n_{p,\alpha}(h)$ cannot be holomorphic if $h$ is holomorphic, nonconstant, and $p\neq 2$, but we can consider its projection into holomorphic space:
\begin{lemma}\label{lemma1}
Let $1<p<\infty$, $\a>0$, and $n\in\N$.  For all $f,h\in\H^n_{p,\alpha}\backslash\{0\}$, we have
\begin{equation}\label{reflemmaeq1}
\frac{|\langle f, P^n_\alpha (G^n_{p,\alpha}(h))\rangle_\alpha|}{\|f\|_{p,\alpha}}\leq \frac{|\langle h, P^n_\alpha (G^n_{p,\alpha}(h))\rangle_\alpha|}{\|h\|_{p,\alpha}},
\end{equation}
and equality is achieved if and only if $f$ is a constant multiple of $h$.
\end{lemma}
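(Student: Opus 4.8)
The plan is to exploit the self-adjointness of $P^n_\alpha$ to strip the projection off the functional, thereby reducing the claim to the sharp H\"older behavior already encoded in $G^n_{p,\alpha}(h)$. Write $\psi = P^n_\alpha(G^n_{p,\alpha}(h))$ for brevity; since $G^n_{p,\alpha}(h)\in L^{p'}(\gamma^n_{\alpha p'/2})$ (as computed just above \eqref{eq G maximizer}) and $P^n_\alpha$ is bounded with holomorphic image, $\psi$ is a well-defined element of $\H^n_{p',\alpha}$. For any $f\in\H^n_{p,\alpha}$ the reproducing property \eqref{e.Pn.rk} gives $P^n_\alpha f=f$, so combining this with the self-adjointness \eqref{sa1} of $P^n_\alpha$ yields
\[ \langle f, \psi\rangle_\alpha = \langle f, P^n_\alpha G^n_{p,\alpha}(h)\rangle_\alpha = \langle P^n_\alpha f, G^n_{p,\alpha}(h)\rangle_\alpha = \langle f, G^n_{p,\alpha}(h)\rangle_\alpha, \]
and the identical computation holds with $h$ in place of $f$. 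Thus both ratios in \eqref{reflemmaeq1} may be rewritten with the (non-holomorphic) function $G^n_{p,\alpha}(h)$ replacing $\psi$.

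Next I would apply H\"older's inequality in the $C_p^n$-scaled form used throughout to the left-hand numerator, giving $|\langle f, G^n_{p,\alpha}(h)\rangle_\alpha| \le C_p^n\|f\|_{p,\alpha}\|G^n_{p,\alpha}(h)\|_{p',\alpha}$, so that the left ratio is at most $C_p^n\|G^n_{p,\alpha}(h)\|_{p',\alpha}$. For the right ratio, the function $G^n_{p,\alpha}(h)$ was constructed precisely to saturate this bound: the defining identity \eqref{eq G maximizer} reads $|\langle h, G^n_{p,\alpha}(h)\rangle_\alpha| = C_p^n\|h\|_{p,\alpha}\|G^n_{p,\alpha}(h)\|_{p',\alpha}$, so the right ratio equals $C_p^n\|G^n_{p,\alpha}(h)\|_{p',\alpha}$ exactly. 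Comparing the two establishes the inequality \eqref{reflemmaeq1}.

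For the equality case, equality in \eqref{reflemmaeq1} forces equality in the H\"older step $|\langle f, G^n_{p,\alpha}(h)\rangle_\alpha| = C_p^n\|f\|_{p,\alpha}\|G^n_{p,\alpha}(h)\|_{p',\alpha}$. Applying the equality characterization \eqref{eq G maximizer 2} with $f$ playing the role of $h$ shows that $G^n_{p,\alpha}(h)$ must be a constant multiple of $G^n_{p,\alpha}(f)$. Using the explicit formula \eqref{eq G(h)}, the common Gaussian factor $e^{-\alpha(\frac{p}{2}-1)|z|^2}$ cancels, leaving $|h(z)|^{p-2}h(z) = c\,|f(z)|^{p-2}f(z)$ for some constant $c$. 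Since $w\mapsto |w|^{p-2}w$ is injective on $\C$, comparing moduli and arguments shows $h/f$ is a fixed constant on the open dense set where $f$ and $h$ are both nonzero; holomorphy then forces $f$ to be a constant multiple of $h$ on all of $\C^n$. The converse is immediate from \eqref{eq G(h)}: if $f=\lambda h$ then $G^n_{p,\alpha}(f) = |\lambda|^{p-2}\lambda\,G^n_{p,\alpha}(h)$, and equality holds.

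The main obstacle I anticipate lies entirely in the first reduction: justifying rigorously that the self-adjointness relation \eqref{sa1}, stated for the common-measure pairing on $L^p(\gamma^n_\alpha)\times L^{p'}(\gamma^n_\alpha)$, legitimately applies to the pairing $\langle f, G^n_{p,\alpha}(h)\rangle_\alpha$ between the \emph{dilated} spaces $\H^n_{p,\alpha}$ and $L^{p'}(\gamma^n_{\alpha p'/2})$, and that all pairings in sight are finite. This can be handled by approximating $f$ and $h$ by holomorphic polynomials (dense in $\H^n_{p,\alpha}$), for which the Hermitian symmetry of the integral kernel $e^{\alpha\langle z,w\rangle}$ gives the identity directly, and then passing to the limit using the boundedness of $P^n_\alpha$. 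Once this reduction is secured, the remaining steps are routine, the only nontrivial ingredient being the injectivity of $w\mapsto|w|^{p-2}w$ together with the identity theorem for holomorphic functions.
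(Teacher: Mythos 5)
Your proof is correct and follows essentially the same route as the paper's: strip the projection using self-adjointness of $P^n_\alpha$ and the reproducing identity $P^n_\alpha f = f$, apply the $C_p^n$-scaled H\"older inequality, and invoke the saturation identity \eqref{eq G maximizer} together with the equality characterization \eqref{eq G maximizer 2}. The only cosmetic difference is in the equality case, where the paper applies \eqref{eq G maximizer 2} with $G^n_{p,\alpha}(h)$ as the fixed function and then verifies $G^n_{p',\alpha}(G^n_{p,\alpha}(h)) = h$, while you fix $f$ and invert the pointwise map $w\mapsto|w|^{p-2}w$ --- the same computation in disguise.
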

\begin{proof}
Assuming the premise of the lemma and using H\"older's inequality, we have 
\begin{eqnarray*}
\frac{|\langle f, P^n_\alpha (G^n_{p,\alpha}(h))\rangle_\alpha|}{\|f\|_{p,\alpha}} &=& \frac{|\langle f, G^n_{p,\alpha}(h)\rangle_\alpha|}{\|f\|_{p,\alpha}}\leq C^n_p \|G^n_{p,\alpha}(h)\|_{p',\alpha}\\
&=& \frac{|\langle h, G^n_{p,\alpha}(h)\rangle_\alpha|}{\|h\|_{p,\alpha}} =\frac{|\langle h, P^n_\alpha (G^n_{p,\alpha}(h))\rangle_\alpha|}{\|h\|_{p,\alpha}},
\end{eqnarray*}
where in the second line we used \eqref{eq G maximizer}.  This proves the first part of the lemma.  For the second part, note that by \eqref{eq G maximizer 2} we have equality above if and only if $f$ is a constant multiple of $G^n_{p',\alpha}(G^n_{p,\alpha}(h))$. However,
\begin{eqnarray*}
G^n_{p',\alpha}(G^n_{p,\alpha}(h))(z)&=& |G^n_{p,\alpha}(h)(z)|^{p'-2}G^n_{p,\alpha}(h)(z)e^{-\alpha(\frac{p}{2}-1)|z|^2}\\
&=& |h(z)|^{(p-1)(p'-2)+(p-2)}e^{-\frac{\alpha}{2}((p'-1)(p-2)+(p'-2))|z|^2} h(z)= h(z),
\end{eqnarray*}
as $(p-1)(p'-2)+(p-2)=(p'-1)(p-2)+(p'-2)=0$.  Thus, we have equality in \eqref{reflemmaeq1} if and only if $f$ is a constant multiple of $h$.
\end{proof}
Let $\N_0=\N\cup\{0\}$.  For $\a>0$, $n\in\N$, and a multi-index $\jj=(j_1,j_2,\ldots,j_n)\in\N_0^n$, define the function $\psi_{\a,\jj}$ as
\begin{equation}\label{psidef}
\psi_{\jj,\a}(z) = \sqrt{\frac{\alpha^{|\jj|}}{\jj!}}z^\jj \quad \text{for} \quad z\in\C^n,
\end{equation}
where we use the standard notations $|\jj|=j_1+j_2+\ldots +j_n$, $\jj!=j_1!j_2!\cdot\ldots\cdot j_n!$, and $z^\jj=z_1^{j_1}z_2^{j_2}\cdot\ldots\cdot z_n^{j_n}$.  The set $\{\psi_{\jj,\a}\}_{\jj\in\N_0^n}$ is an orthonormal basis in $\mathcal{H}^n_{2,\a}$.  These functions also have the interesting property that $P^n_\a(G_{p,\alpha}(\psi_{\jj,\a}))$ is a multiple of $\psi_{\jj,\a}$.
\begin{lemma}\label{lemma2}
Let $1<p<\infty$, $\a>0$, and $n\in\N$. For each multi-index $\jj=(j_1,j_2,\ldots,j_n)\in\N_0^n$, the function $\psi_{\jj,\a}$ defined in \eqref{psidef} satisfies
\begin{equation}\label{lemma2eq0}
P^n_{\alpha} G^n_{p,\alpha}(\psi_{\jj,\a}) = \left(\frac{2}{p}\right)^{|\jj|p/2+n}\frac{\prod_{k=1}^n\Gamma(j_k p/2+1)}{\sqrt{\jj!}^{p}}\psi_{\jj,\a}.
\end{equation}
In particular, there exists a constant $K_{\jj,p,\a}$ such that
\begin{equation}\label{lemma2eq1}
P^n_{\alpha} G^n_{p,\alpha}(\psi_{\jj,\a}) = K_{\jj,p,\a}z^\jj.
\end{equation} 
\end{lemma}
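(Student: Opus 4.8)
The plan is to exploit that $G^n_{p,\alpha}(\psi_{\jj,\a})$ factors over the $n$ coordinates and carries a single ``phase type'' under the coordinatewise torus action on $\C^n$. First I would reduce to one dimension. Writing $\psi_{j,\alpha}(w)=\sqrt{\alpha^j/j!}\,w^j$ on $\C$, the definition \eqref{eq G(h)} gives $G^n_{p,\alpha}(\psi_{\jj,\a})(w)=\prod_{k=1}^n G^1_{p,\alpha}(\psi_{j_k,\alpha})(w_k)$, and since $\gamma^n_\alpha$ is a product measure while the reproducing kernel factors as $e^{\alpha\langle z,w\rangle}=\prod_{k=1}^n e^{\alpha z_k\overline{w_k}}$, the integral computing $P^n_\alpha G^n_{p,\alpha}(\psi_{\jj,\a})$ splits into a product of $n$ one-variable integrals. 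It therefore suffices to prove the one-variable identity
\[ P^1_\alpha G^1_{p,\alpha}(\psi_{j,\alpha}) = \left(\tfrac{2}{p}\right)^{jp/2+1}\frac{\Gamma(jp/2+1)}{(j!)^{p/2}}\,\psi_{j,\alpha}, \qquad j\in\N_0, \]
since multiplying the $n$ resulting factors yields \eqref{lemma2eq0}, and \eqref{lemma2eq1} is then immediate.

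For the one-variable computation I would first observe that $G^1_{p,\alpha}(\psi_{j,\alpha})\in L^2(\gamma^1_\alpha)$: up to constants its modulus squared against $\gamma^1_\alpha$ is $|w|^{2(p-1)j}e^{-\alpha(p-1)|w|^2}$, which is integrable because $p>1$. Hence $P^1_\alpha$ acts on it as the genuine orthogonal projection of $L^2(\gamma^1_\alpha)$ onto $\H^1_{2,\alpha}$, for which $\{\psi_{m,\alpha}\}_{m\ge0}$ is an orthonormal basis, so
\[ P^1_\alpha G^1_{p,\alpha}(\psi_{j,\alpha}) = \sum_{m=0}^\infty \langle G^1_{p,\alpha}(\psi_{j,\alpha}),\psi_{m,\alpha}\rangle_\alpha\,\psi_{m,\alpha}. \]
Passing to polar coordinates $w=re^{i\theta}$, the angular part of the $m$-th coefficient is $\int_0^{2\pi}e^{i(j-m)\theta}\,d\theta=2\pi\,\delta_{jm}$, so only the $m=j$ term survives. (The same fact follows cleanly from rotational invariance: $G^1_{p,\alpha}(\psi_{j,\alpha})(e^{i\theta}w)=e^{ij\theta}G^1_{p,\alpha}(\psi_{j,\alpha})(w)$ and $P^1_\alpha$ intertwines the torus action, so the holomorphic output must be a multiple of $z^j$; this already gives \eqref{lemma2eq1}.) The surviving radial integral is $\int_0^\infty r^{pj+1}e^{-\alpha p r^2/2}\,dr$, which the substitution $u=\tfrac{\alpha p}{2}r^2$ turns into a multiple of $\Gamma(jp/2+1)$.

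The remaining work is bookkeeping: combining the normalization $\sqrt{\alpha^j/j!}$ of $\psi_{j,\alpha}$, the prefactor $(\alpha^j/j!)^{(p-1)/2}$ of $G^1_{p,\alpha}(\psi_{j,\alpha})$, the weight $\alpha/\pi$ of $\gamma^1_\alpha$, and the $2\pi$ and $\Gamma(jp/2+1)$ from the integral, then checking that all powers of $\alpha$ cancel down to the single $\alpha^{j/2}$ carried by $\psi_{j,\alpha}$ and that the leftover constant equals $(2/p)^{jp/2+1}\Gamma(jp/2+1)/(j!)^{p/2}$. This collapse of constants is the one place where errors can creep in and is where I expect to spend the most care; everything qualitative (that the projection is a monomial multiple of $z^\jj$) is forced by symmetry and costs nothing. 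As an alternative to invoking the $L^2$ projection, I could apply the integral formula $(P^1_\alpha g)(z)=\int_\C e^{\alpha z\overline w}g(w)\,\gamma^1_\alpha(dw)$ directly, expand $e^{\alpha z\overline w}=\sum_m (\alpha z)^m\overline{w}^m/m!$, and integrate term by term; this needs only a routine dominated-convergence justification for the interchange, again supplied by the Gaussian decay.
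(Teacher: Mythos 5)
Your proposal is correct, and its computational core is the same as the paper's: reduce to $n=1$ by factorization over coordinates, then compute $\langle G^1_{p,\alpha}(\psi_{j,\alpha}),\psi_{m,\alpha}\rangle_\alpha$ in polar coordinates, with the angular integral producing $\delta_{jm}$ and the radial substitution $u=\tfrac{\alpha p}{2}r^2$ producing $\Gamma(jp/2+1)$; your claimed one-variable constant agrees with \eqref{lemma2eq0} since $(j!)^{p/2}=\sqrt{j!}^{\,p}$. Where you genuinely differ is the functional-analytic scaffolding. The paper stays in the $L^p$--$L^{p'}$ duality setting: it uses the self-adjointness property \eqref{sa1} together with $P^1_\alpha\psi_{k,\alpha}=\psi_{k,\alpha}$ to identify $\langle P^1_\alpha G^1_{p,\alpha}(\psi_{j,\alpha}),\psi_{k,\alpha}\rangle_\alpha$ with the computable integral, and then needs Lemma \ref{lemma0} (norm convergence of Taylor series) to conclude that these coefficients determine $P^1_\alpha G^1_{p,\alpha}(\psi_{j,\alpha})$. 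You instead observe that $G^1_{p,\alpha}(\psi_{j,\alpha})$ lies in $L^2(\gamma^1_\alpha)$ (integrability of $|w|^{2(p-1)j}e^{-\alpha(p-1)|w|^2}$, valid exactly because $p>1$), so $P^1_\alpha$ acts on it as the honest orthogonal projection and the standard orthonormal-basis expansion applies; this bypasses both \eqref{sa1} and Lemma \ref{lemma0}, and since only the $m=j$ term survives there is no convergence issue at all. The one point you should make explicit is that applying $P^1_\alpha$ to a function lying in both $L^{p'}(\gamma^1_{\alpha p'/2})$ and $L^2(\gamma^1_\alpha)$ is unambiguous; this is immediate because in either guise it is the same absolutely convergent integral \eqref{e.Pn.int}. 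Your rotational-symmetry remark (that $G^1_{p,\alpha}(\psi_{j,\alpha})$ transforms by $e^{ij\theta}$ under the torus action, which $P^1_\alpha$ intertwines) is also sound and yields \eqref{lemma2eq1} with no computation, an observation the paper does not make.
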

\begin{proof}
First note we have for any $\jj\in\N_0^n$
\begin{equation*}
P^n_{\alpha} G^n_{p,\alpha}(\psi_{\jj,\a})(z) = P^n_{\alpha} G^n_{p,\alpha}\left(\prod_{k=1}^n\psi_{j_k,\a}(z_k)\right)=P^n_{\alpha} \prod_{k=1}^n G^1_{p,\alpha}(\psi_{j_k,\a}(z_k)) =  \prod_{k=1}^n P^1_{\alpha} G^1_{p,\alpha}(\psi_{j_k,\a}(z_k)).
\end{equation*}
Thus, the general result will follow if we prove the lemma for $n=1$.  To that end, fix a nonnegative integer $j$.  Using the self-adjointness of $P^1_\alpha$ as well as the fact that $P^1_\alpha$ is the identity on $\mathcal{H}^1_{p,\alpha}$ we have for any other nonnegative integer $k$:
\begin{eqnarray}
\left\langle P^1_\alpha G^1_{p,\alpha}(\psi_{j,\a}), \psi_{k,\a}\right\rangle_\alpha 
&=& \frac{\alpha^{k/2+(p-1)j/2+1}}{\pi \sqrt{k!}\sqrt{j!}^{p-1}}\int_\C |z|^{j(p-2)}z^j(\overline{z})^k e^{-\frac{\alpha p}{2}|z|^2} dz\nonumber\\
&=& \frac{\alpha^{k/2+(p-1)j/2+1}}{\pi \sqrt{k!}\sqrt{j!}^{p-1}}\int_0^\infty r^{j(p-1)+k+1} e^{-\frac{\alpha p}{2}r^2} \left(\int_0^{2\pi} e^{i(j-k)\theta}d\theta\right) dr\nonumber\\
&=& \left(\frac{2\alpha^{pj/2+1}}{\sqrt{j!}^{p}}\int_0^\infty r^{jp+1} e^{-\frac{\alpha p}{2}r^2} dr\right)\delta_{jk}\nonumber\\
&=& \left(\left(\frac{2}{p}\right)^{jp/2+1}\frac{1}{\sqrt{j!}^{p}}\Gamma(jp/2+1)\right)\delta_{jk}.\label{ProjectionOfZm}
\end{eqnarray}
By Lemma \ref{lemma0} the Taylor series of the exponential function converges in $L^p(\gamma^1_{\a p/2})$, and so we use \eqref{ProjectionOfZm} to compute 
\begin{equation*}
P^1_\alpha G^1_{p,\alpha}(\psi_{j,\a})(z) = \sum_{k=0}^\infty \left\langle P^1_\alpha G^1_{p,\alpha}(\psi_{j,\a}), \psi_{k,\a}\right\rangle_\alpha \psi_{k,\a}(z) 
= \left(\frac{2}{p}\right)^{jp/2+1}\frac{\Gamma(jp/2+1)}{\sqrt{j!}^{p}}\psi_{j,\a}(z),
\end{equation*}
proving \eqref{lemma2eq0}.  Equation \eqref{lemma2eq1} follows from \eqref{lemma2eq0} as $z^\jj$ is a constant multiple of $\psi_{\jj,\a}$.
\end{proof}
We now can prove our partial result on the ratio $\mathcal{R}_{p,\alpha}$ taking on maximal values on monomials:
\begin{theorem}\label{lemma3}
Let $1<p<\infty$, $\a>0$, and $n\in\N$.  For any $f\in\H^n_{p,\alpha}\backslash\{0\}$ and multi-index $\jj\in\N^n$, we have
\begin{equation}\label{lemma3eq0}
\frac{|\langle f, z^\jj\rangle_\alpha|}{\|f\|_{p,\alpha}} \leq \frac{|\langle z^\jj, z^\jj\rangle_\alpha|}{\|z^\jj\|_{p,\alpha}}, 
\end{equation}
or, equivalently,
\begin{equation}\label{lemma3eq-1}
\mathcal{R}_{p,\a}(f,z^\jj) \leq \mathcal{R}_{p,\a}(z^\jj,z^\jj). 
\end{equation}
Furthermore, both inequalities \eqref{lemma3eq0} and \eqref{lemma3eq-1} are sharp and are equality if and only if $f(z)$ is a constant multiple of $z^\jj$.
\end{theorem}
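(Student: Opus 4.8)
The plan is to specialize Lemma \ref{lemma1} to the single choice $h=\psi_{\jj,\a}$ and then feed in the explicit identification of the projected maximizer from Lemma \ref{lemma2}. Concretely, I would fix $\jj\in\N^n\subseteq\N_0^n$ and apply \eqref{reflemmaeq1} with $h=\psi_{\jj,\a}$, which gives
\[ \frac{|\langle f, P^n_\alpha(G^n_{p,\alpha}(\psi_{\jj,\a}))\rangle_\alpha|}{\|f\|_{p,\alpha}} \leq \frac{|\langle \psi_{\jj,\a}, P^n_\alpha(G^n_{p,\alpha}(\psi_{\jj,\a}))\rangle_\alpha|}{\|\psi_{\jj,\a}\|_{p,\alpha}}. \]
By \eqref{lemma2eq1}, the argument $P^n_\alpha(G^n_{p,\alpha}(\psi_{\jj,\a}))$ equals $K_{\jj,p,\a}z^\jj$, and from the explicit formula \eqref{lemma2eq0} together with the definition \eqref{psidef} one reads off that $K_{\jj,p,\a}$ is a strictly positive real number: it is a product of a positive power of $2/p$, positive Gamma values, and the positive factor $\sqrt{\alpha^{|\jj|}/\jj!}$.

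The remaining step is to cancel the positive constants on both sides. Since $K_{\jj,p,\a}>0$ passes through the sesquilinear pairing as a positive scalar, the factor $|K_{\jj,p,\a}|=K_{\jj,p,\a}$ appears in both numerators and divides out. Likewise $\psi_{\jj,\a}=\sqrt{\alpha^{|\jj|}/\jj!}\,z^\jj$ is a positive real multiple of $z^\jj$, so on the right-hand side that same normalization cancels between $\langle \psi_{\jj,\a},z^\jj\rangle_\alpha$ and $\|\psi_{\jj,\a}\|_{p,\alpha}$. What survives is exactly \eqref{lemma3eq0}. Dividing both sides of \eqref{lemma3eq0} by $\|z^\jj\|_{p',\alpha}$ converts it into \eqref{lemma3eq-1}, which establishes the claimed equivalence of the two formulations.

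For the equality characterization I would simply transport the equality clause of Lemma \ref{lemma1}: equality in \eqref{reflemmaeq1} with $h=\psi_{\jj,\a}$ holds if and only if $f$ is a constant multiple of $\psi_{\jj,\a}$, which is the same as being a constant multiple of $z^\jj$. Sharpness of both \eqref{lemma3eq0} and \eqref{lemma3eq-1} then follows at once by taking $f=z^\jj$, which attains equality. Since the substantive analysis has already been carried out in Lemmas \ref{lemma1} and \ref{lemma2}, I do not expect a genuine obstacle here; the theorem is a direct specialization. The one point requiring care is the bookkeeping of constants --- one must verify that both $K_{\jj,p,\a}$ and the factor relating $\psi_{\jj,\a}$ to $z^\jj$ are positive reals, so that they pass cleanly through the absolute values and cancel, leaving the statement phrased in terms of the bare monomials $z^\jj$.
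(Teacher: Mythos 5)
Your proposal is correct and follows essentially the same route as the paper's own proof: apply Lemma \ref{lemma1} with $h=\psi_{\jj,\a}$, identify $P^n_\alpha(G^n_{p,\alpha}(\psi_{\jj,\a}))$ as the positive multiple $K_{\jj,p,\a}z^\jj$ via Lemma \ref{lemma2}, cancel the positive constants, and transport the equality clause. If anything, your bookkeeping is slightly cleaner than the paper's, whose displayed application of Lemma \ref{lemma1} writes $G^n_{p'\!,\alpha}$ where consistency with that lemma (and with the denominators $\|\cdot\|_{p,\alpha}$) requires $G^n_{p,\alpha}$, exactly as you have it.
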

\begin{proof}
Assume the premise of the theorem.  First note that \eqref{lemma3eq0} and \eqref{lemma3eq-1} are equivalent, as \eqref{lemma3eq-1} is just \eqref{lemma3eq0} with both sides of the inequality divided by $\|z^\jj\|_{p',\a}$.  Thus, to prove the rest of the lemma, it suffices to consider only \eqref{lemma3eq0}. To that end, we apply Lemma \ref{lemma1} to the functions $f$ and $\psi_{\jj,\a}$:
\begin{equation}
\frac{|\langle f, P^n_\alpha (G^n_{p',\alpha}(\psi_{\jj,\a}))\rangle_\alpha|}{\|f\|_{p,\alpha}} \leq \frac{|\langle \psi_{\jj,\a}, P^n_\alpha (G^n_{p',\alpha}(\psi_{\jj,\a})) \rangle_\alpha|}{\|\psi_{\jj,\a}\|_{p,\alpha}}= \frac{|\langle z^\jj, P^n_\alpha (G^n_{p',\alpha}(\psi_{\jj,\a})) \rangle_\alpha|}{\|z^\jj\|_{p,\alpha}}.\label{lemma3eq1}
\end{equation}
By Lemma \ref{lemma2}, dividing both sides of the inequality above by $K_{\jj,p,\a}$ yields \eqref{lemma3eq0}.  Note that by Lemma \ref{lemma1} we have equality in \eqref{lemma3eq1} if and only if $f$ is a constant multiple of $\psi_{\jj,\a}$, which is equivalent to $f(z)$ being a constant multiple of $z^\jj$, as desired.
\end{proof}
\begin{remark}
By symmetry of $\mathcal{R}_{p,\a}$, Theorem \ref{lemma3} also implies that, for any fixed $1<p<\infty$, $\a>0$, $n\in\N$, and $\jj\in\N^n$, we have for all $g\in\mathcal{H}^n_{p',\a}$
\[\mathcal{R}_{p,\a}(z^\jj,g)\leq \mathcal{R}_{p,\a}(z^\jj,z^\jj).\]
\end{remark}

Theorem \ref{lemma3} is a weak form of the fully conjectured theorem: that $\sup_{f,g}\mathcal{R}_{p,\a}(f,g)=C_p^{n/2}$ (or equivalently $\sup_{f,g}\mathcal{R}_{p,\a}(f,g)\le C_p^{n/2}$).  Indeed, Section \ref{s.proof of t.3} computes that $\mathcal{R}_{p,\a}(z^\jj,z^\jj) < C_p^{n/2}$ for all $\jj$ (and that $C_p^{n/2}$ is the limit as $\jj\to\infty$), and so we have shown that
\begin{equation} \label{e.max.each.variable} \mathcal{R}_{p,\a}(f,z^\jj) < C_p^{n/2} \quad\text{ for all } \; \jj\in\N_0^n. \end{equation}
Thus, restricting one of the variables in $\mathcal{R}_{p,\alpha}$ to range through the space of monomials $z^\jj$ gives the global maximum result, proving Theorem \ref{t.4.new}.  This is, of course, a far cry from the desired theorem, but it is suggestive.  We also have two interesting corollaries from Theorem \ref{lemma3}. The first holds for all finite complex dimensions $n$, while the second focuses on Taylor coefficients when $n=1$.  Both corollaries involve ``projecting'' a function onto monomials.  More specifically, fix a complex dimension $n$.  For any multi-index $\jj\in\N^n$, define the map $P_{\jj,\alpha}:\H^n_{p,\alpha}\to\mathrm{Span}\{z^\jj\}\subseteq \H^n_{p,\alpha}$ as
\[P_{\jj,\alpha}(f) = \langle f, \psi_{\jj,\a}\rangle_\alpha \psi_{\jj,\a}.\]
By H\"older's inequality, this mapping is continuous.  Heuristically, $P_{\jj,\alpha}(h)$ is ``projecting'' $h$ onto the $\jj^{th}$ monomial.  The corollary below gives us more reason to call $P_{\jj,\alpha}$ a ``projection,'' despite the lack of a Hilbert space structure to the space $\H^n_{p,\alpha}$.
\begin{corollary}\label{cor53}
Let $1<p<\infty$, $\a>0$, and $n\in\N$.  The mapping $P_{\jj,\alpha}:\H^n_{p,\alpha}\to\H^n_{p,\alpha}$ has norm $1$.  That is, if $f\in\H^n_{p,\alpha}$, then
\begin{equation}\label{cor53eq0}
\|\langle f, \psi_{\jj,\a}\rangle_\alpha \psi_{\jj,\a}\|_{p,\alpha}\leq \|f\|_{p,\alpha}.
\end{equation}
Furthermore, we have equality if and only if $f(z)$ is a constant multiple of $z^\jj$.
\end{corollary}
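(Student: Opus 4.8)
The plan is to read this corollary off directly from Theorem \ref{lemma3}, the only extra ingredients being the homogeneity of the $L^p$-norm and the normalization of $\psi_{\jj,\a}$. First I would record the elementary identity
\[ \|P_{\jj,\alpha}(f)\|_{p,\alpha} = \|\langle f,\psi_{\jj,\a}\rangle_\alpha\,\psi_{\jj,\a}\|_{p,\alpha} = |\langle f,\psi_{\jj,\a}\rangle_\alpha|\,\|\psi_{\jj,\a}\|_{p,\alpha}, \]
which holds because the scalar $\langle f,\psi_{\jj,\a}\rangle_\alpha$ pulls out of the norm in absolute value. Hence establishing \eqref{cor53eq0} is exactly the same as establishing the ratio bound
\[ \frac{|\langle f,\psi_{\jj,\a}\rangle_\alpha|}{\|f\|_{p,\alpha}} \le \frac{1}{\|\psi_{\jj,\a}\|_{p,\alpha}}. \]

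Next I would invoke Theorem \ref{lemma3}. Since $\psi_{\jj,\a}$ is a positive scalar multiple of the monomial $z^\jj$ (namely $\psi_{\jj,\a} = \sqrt{\alpha^{|\jj|}/\jj!}\,z^\jj$), the inequality \eqref{lemma3eq0} is unchanged if $z^\jj$ is replaced by any positive multiple $c\,z^\jj$: the left side scales by $c$, and the right side also scales by $c$ (its numerator by $c^2$, its denominator by $c$), so the inequality is invariant. Applying Theorem \ref{lemma3} in this normalized form gives
\[ \frac{|\langle f,\psi_{\jj,\a}\rangle_\alpha|}{\|f\|_{p,\alpha}} \le \frac{\langle \psi_{\jj,\a},\psi_{\jj,\a}\rangle_\alpha}{\|\psi_{\jj,\a}\|_{p,\alpha}}. \]
I would then use that $\{\psi_{\jj,\a}\}$ is an orthonormal basis of $\H^n_{2,\a}$, so that $\langle \psi_{\jj,\a},\psi_{\jj,\a}\rangle_\alpha = \|\psi_{\jj,\a}\|_{2,\a}^2 = 1$; this collapses the right-hand side to $1/\|\psi_{\jj,\a}\|_{p,\alpha}$, precisely the bound sought above. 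Multiplying back through by $\|f\|_{p,\alpha}\|\psi_{\jj,\a}\|_{p,\alpha}$ recovers \eqref{cor53eq0}, proving $\|P_{\jj,\alpha}\|\le 1$; evaluating on $f=z^\jj$, where $P_{\jj,\alpha}(f)=f$, shows the norm equals exactly $1$.

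Finally, the equality case is inherited verbatim: equality in \eqref{cor53eq0} forces equality in \eqref{lemma3eq0}, which Theorem \ref{lemma3} characterizes as occurring precisely when $f$ is a constant multiple of $z^\jj$, and conversely any such $f$ is a fixed point of $P_{\jj,\alpha}$ and so saturates the bound. I expect no substantive obstacle here, since the analytic content has already been delivered by Theorem \ref{lemma3}; the only point requiring care is keeping the normalization constant $\sqrt{\alpha^{|\jj|}/\jj!}$ straight when passing between $z^\jj$ and $\psi_{\jj,\a}$, and the scale-invariance of the ratio renders this harmless.
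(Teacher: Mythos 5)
Your proposal is correct and follows essentially the same route as the paper: both reduce \eqref{cor53eq0} to the ratio bound of Theorem \ref{lemma3} applied with $\psi_{\jj,\a}$ in place of $z^\jj$ (justified by scale-invariance of that inequality), use the normalization $\langle \psi_{\jj,\a},\psi_{\jj,\a}\rangle_\alpha=1$, and inherit the equality case from Theorem \ref{lemma3}. The only cosmetic difference is that the paper explicitly disposes of the trivial case $f\equiv 0$ before dividing by $\|f\|_{p,\alpha}$, a step you should include since your ratio formulation presupposes $f\neq 0$.
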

\begin{proof}
Assume the premise of the corollary.  If $f$ is the zero function, then the result follows.  So assume $f$ is not the $0$ function.  As $\psi_{\jj,\a}$ is a scalar multiple of $z^\jj$, by Theorem \ref{lemma3}, we have
\[\frac{|\langle f, \psi_{\jj,\a}\rangle_\alpha|}{\|f\|_{p,\alpha}} \leq \frac{|\langle \psi_{\jj,\a}, \psi_{\jj,\a}\rangle_\alpha|}{\|\psi_{\jj,\a}\|_{p,\alpha}}= \frac{1}{\|\psi_{\jj,\a}\|_{p,\alpha}}.\]
Multiplying the above by $\|\psi_{\jj,\a}\|_{p,\alpha}\|f\|_{p,\alpha}$ yields \eqref{cor53eq0}.  The equality condition again follows from the equality condition in Theorem \ref{lemma3}.
\end{proof}
Now we can use Corollary \ref{cor53} to prove another corollary on Taylor coefficients in the case where $n=1$.
\begin{corollary}\label{cor5n=1}
Let $1<p<\infty$ and $\a>0$. Let $f\in\mathcal{H}^1_{p,\a}$ and with the Taylor series $f(z)=\sum_{k=0}^\infty a_k z^k$. Then for any $j\in\N_0$, 
\begin{equation}\label{cor5n=1eq1}
\|a_j z^j\|_{p,\alpha}\leq \left\|\sum_{k=0}^\infty a_k z^k\right\|_{p,\alpha}=\|f\|_{p,\alpha},
\end{equation}
and
\begin{equation}\label{cor5n=1eq2}
|a_j|\leq \frac{\left(\frac{\alpha p}{2}\right)^{j/2}}{\Gamma\left(\frac{j p}{2}+1\right)^{1/p}}\|f\|_{p,\alpha}.
\end{equation}
In either inequality above, we have equality if and only if $f(z)$ is a constant multiple of $z^j$.
\end{corollary}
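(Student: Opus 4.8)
The plan is to deduce both inequalities as immediate consequences of Corollary \ref{cor53}, once I identify the projection $P_{j,\alpha}(f)$ with the single Taylor term $a_j z^j$. The whole argument is really a bookkeeping reduction to the $n=1$ instance of that corollary; the only point requiring genuine care is the justification that the coefficient functional acts term-by-term on the Taylor series.

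First I would compute $\langle f,\psi_{j,\a}\rangle_\alpha$. Since $\psi_{j,\a}$ is a holomorphic polynomial it lies in $\mathcal{H}^1_{p',\a}$, so by H\"older's inequality (the right-hand side of \eqref{e.norm.equiv}) the functional $g\mapsto\langle g,\psi_{j,\a}\rangle_\alpha$ is bounded on $\mathcal{H}^1_{p,\a}$. By Lemma \ref{lemma0} the partial sums $\sum_{k=0}^N a_k z^k$ converge to $f$ in $\|\cdot\|_{p,\a}$, so continuity lets me pass the functional through the sum:
\[ \langle f,\psi_{j,\a}\rangle_\alpha=\sum_{k=0}^\infty a_k\langle z^k,\psi_{j,\a}\rangle_\alpha. \]
Using $z^k=\sqrt{k!/\alpha^k}\,\psi_{k,\a}$ together with the orthonormality of $\{\psi_{k,\a}\}$ in $\mathcal{H}^1_{2,\a}$ collapses the sum to the single term $\langle f,\psi_{j,\a}\rangle_\alpha=a_j\sqrt{j!/\alpha^j}$. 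Substituting into the definition $P_{j,\alpha}(f)=\langle f,\psi_{j,\a}\rangle_\alpha\,\psi_{j,\a}$ and cancelling the normalizing factors, the two square roots multiply to $1$ and I obtain exactly $P_{j,\alpha}(f)=a_j z^j$.

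With this identification in hand, the first inequality \eqref{cor5n=1eq1} is literally the statement of Corollary \ref{cor53}: $\|a_j z^j\|_{p,\alpha}=\|P_{j,\alpha}(f)\|_{p,\alpha}\le\|f\|_{p,\alpha}$, and the middle equality in \eqref{cor5n=1eq1} is just the $L^p$-convergence of the Taylor series from Lemma \ref{lemma0}. The equality case transfers verbatim: Corollary \ref{cor53} gives equality precisely when $f$ is a constant multiple of $z^j$. For the second inequality \eqref{cor5n=1eq2} I factor $\|a_j z^j\|_{p,\alpha}=|a_j|\,\|z^j\|_{p,\alpha}$ and insert the monomial norm already computed in the proof of Theorem \ref{t.3}, namely $\|z^j\|_{p,\alpha}=\left(\tfrac{2}{\alpha p}\right)^{j/2}\Gamma(jp/2+1)^{1/p}$. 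Dividing \eqref{cor5n=1eq1} through by this quantity yields \eqref{cor5n=1eq2} with the displayed constant, and the equality condition is inherited unchanged.

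The main (mild) obstacle is the interchange of the pairing and the infinite sum in the computation of $\langle f,\psi_{j,\a}\rangle_\alpha$; everything else is a direct invocation of Corollary \ref{cor53} and a substitution of a norm already evaluated. I would make that interchange rigorous by combining the boundedness of the functional (H\"older) with the norm-convergence of the partial sums supplied by Lemma \ref{lemma0}, which together guarantee that $\langle\cdot,\psi_{j,\a}\rangle_\alpha$ commutes with the limit. No new analytic input beyond the already-established lemmas is needed.
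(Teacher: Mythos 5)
Your proposal is correct and follows essentially the same route as the paper: both use Lemma \ref{lemma0} to justify the term-by-term evaluation $\langle f,\psi_{j,\alpha}\rangle_\alpha\psi_{j,\alpha}=a_jz^j$, then apply Corollary \ref{cor53} for \eqref{cor5n=1eq1} and its equality case, and divide by the monomial norm $\|z^j\|_{p,\alpha}=\left(\frac{2}{\alpha p}\right)^{j/2}\Gamma\left(\frac{jp}{2}+1\right)^{1/p}$ to obtain \eqref{cor5n=1eq2}. The only cosmetic differences are that you cite the monomial norm already computed in Section \ref{s.proof of t.3} rather than recomputing it, and you spell out the continuity-plus-norm-convergence justification for interchanging the pairing with the sum, which the paper leaves implicit.
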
 
\begin{proof}
Assume the premise of the corollary.  We first prove \eqref{cor5n=1eq1}.  By Lemma \ref{lemma0}, the Taylor series of $f(z)$ converges in norm, and so for any fixed $j\in\N$, we have (using $w$ as the integration variable here)
\begin{equation}\label{cor5n=1eq3}\langle f,\psi_{j,\a}\rangle_\a\psi_{j,\a} = \left\langle \sum_{k=0}^\infty a_k w^k,\psi_{j,\a}(w) \right\rangle_\a\psi_{j,\a}(z)=a_j \langle w^j, \psi_{j,\a}(w)\rangle_\a\psi_{j,\a}(z)=a_j z^j.\end{equation}
Inequality \eqref{cor5n=1eq1} as well as the equality statement now follows from Corollary \ref{cor53} and \eqref{cor5n=1eq3}.  
To prove \eqref{cor5n=1eq2}, first note that 
\begin{eqnarray*}
\|z^j\|_{p,\alpha} &=& \left(\int_\C|z|^{j p}\left(\frac{\alpha p}{2\pi}\right)e^{-\frac{\alpha p}{2}|z|^2} dz\right)^{1/p}\\
&=& \left(\left(\frac{2}{\alpha p}\right)^{j p/2}\int_0^\infty \left(\frac{\alpha p}{2}r^2\right)^{j p/2}e^{-\frac{\alpha p}{2}r^2} \alpha pr dr\right)^{1/p}\\
&=& \left(\left(\frac{2}{\alpha p}\right)^{jp/2}\int_0^\infty u^{mp/2}e^{-u}du\right)^{1/p}\\
&=& \left(\frac{2}{\alpha p}\right)^{j/2}\Gamma\left(j p/2+1\right)^{1/p}.
\end{eqnarray*}
Thus dividing \eqref{cor5n=1eq1} by $\|z^j\|_{p,\a}$ yields \eqref{cor5n=1eq2}, proving the corollary.
\end{proof}
\section{The Conjectured Sharp Constant and Quadratic Exponentials}
Let $\mathcal{M}=\{z^N: \mbox{$N$ is a nonnegative integer}\}$.  We previously showed in \eqref{e.gammaN} that, for dimension $n=1$,
$$\sup_{g,h\in\mathcal{M}} \mathcal{R}_{p,\alpha}(g,h) = \sqrt{C_{p}}.$$
To strengthen our evidence for $\sqrt{C_p}$ being the sharp constant for $n=1$ we will prove a similar equality over a larger family of functions that is more natural to the problem's setting.  To that end, we consider functions $g$ in the family
\begin{equation*}
\mathcal{G}=\left\{e^{p(z)}: \mbox{$p(z)$ is a polynomial} \right\}.
\end{equation*}
These functions are holomorphic but not generally in $\H^1_{p,\a}$.  In particular, if $g(z)=e^{p(z)}\in \mathcal{H}^n_{p,\alpha}$, then by \eqref{e.pointwise1} the function $e^{\Re(p(z))-\alpha |z|^2/2}$ is bounded on $\C$ by $\|g\|_{p,\alpha}$, implying that the degree of $p(z)$ must be no larger than 2.  In fact, if $p(z)$ is quadratic, then its leading coefficient must satisfy a certain inequality, as proven in the next lemma:

\begin{lemma}\label{gaussianpnorma}
Let $\alpha>0$ and $a, c\in\C$, and define $g(z)=\exp\left(\alpha a z+ \frac{\alpha}{2}cz^2\right)$.  If $1\leq p<\infty$, then $f(z)\in \mathcal{H}^1_{p,\alpha}$ if and only if $|c|<1$.  In this case, we have 
$$\|g\|_{p,\alpha}=\frac{1}{(1-|c|^2)^{1/2p}}\exp\left(\frac{\alpha}{2}\frac{|a|^2+\Re(\overline{c}a^2)}{1-|c|^2}\right).$$
\end{lemma}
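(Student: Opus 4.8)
The plan is to compute $\|g\|_{p,\alpha}^p$ directly as a two-dimensional real Gaussian integral and apply Lemma~\ref{l.Gauss.int}. Since $|g(z)|^p = \exp\!\left(p\,\Re\!\left(\alpha a z + \tfrac{\alpha}{2}c z^2\right)\right)$, the definition \eqref{e.Hpa.norm} of the norm gives
\[ \|g\|_{p,\alpha}^p = \frac{\alpha p}{2\pi}\int_{\C} \exp\!\left(p\,\Re\!\left(\alpha a z + \tfrac{\alpha}{2}c z^2\right) - \tfrac{\alpha p}{2}|z|^2\right)\lambda(dz). \]
First I would write $z=x+iy$, $a=a_1+ia_2$, $c=c_1+ic_2$, and expand the exponent into a real quadratic-plus-linear form in $(x,y)$. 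This puts the integrand into exactly the shape $\exp(-(v,Av)+2(w,v))$ required by Lemma~\ref{l.Gauss.int}, with $v=(x,y)\in\R^2$ and
\[ A = \frac{\alpha p}{2}\begin{pmatrix} 1-c_1 & c_2 \\ c_2 & 1+c_1 \end{pmatrix}, \qquad w = \frac{\alpha p}{2}\,(a_1,\,-a_2). \]

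The membership dichotomy is then immediate from the lemma. Here $A$ is a real symmetric matrix, so $\Re(A)=A$; its trace is the positive constant $\alpha p$, and $\det A = \left(\tfrac{\alpha p}{2}\right)^2(1-|c|^2)$. Thus $A$ is positive definite exactly when $\det A>0$, i.e. precisely when $|c|<1$, and Lemma~\ref{l.Gauss.int} states that the integrand lies in $L^1$ if and only if $\Re(A)$ is positive definite — this gives \emph{both} directions of the claim $g\in\mathcal{H}^1_{p,\alpha}\iff|c|<1$ at once. When $|c|<1$, the same lemma evaluates the integral as $\frac{\pi}{\sqrt{\det A}}e^{(w,A^{-1}w)}$, and the prefactor $\frac{\alpha p}{2\pi}\cdot\frac{\pi}{\sqrt{\det A}}$ collapses neatly to $(1-|c|^2)^{-1/2}$.

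The only genuine bookkeeping is the exponent $(w,A^{-1}w)$. I would factor $A=\tfrac{\alpha p}{2}B$ and invert the $2\times2$ matrix $B$ (whose determinant is $1-|c|^2$), obtaining $(w,A^{-1}w)=\frac{\alpha p}{2}\cdot\frac{(u,\operatorname{adj}(B)\,u)}{1-|c|^2}$ with $u=(a_1,-a_2)$. Expanding $(u,\operatorname{adj}(B)\,u)$ produces $a_1^2+a_2^2+c_1(a_1^2-a_2^2)+2c_2a_1a_2$, and the crucial observation is that $a_1^2+a_2^2=|a|^2$ while $c_1(a_1^2-a_2^2)+2c_2a_1a_2=\Re(\overline{c}\,a^2)$, since $a^2=(a_1^2-a_2^2)+2ia_1a_2$. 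Hence $(w,A^{-1}w)=\frac{\alpha p}{2}\cdot\frac{|a|^2+\Re(\overline{c}\,a^2)}{1-|c|^2}$, so that $\|g\|_{p,\alpha}^p=(1-|c|^2)^{-1/2}\exp\!\left(\frac{\alpha p}{2}\cdot\frac{|a|^2+\Re(\overline{c}\,a^2)}{1-|c|^2}\right)$, and taking $p$-th roots yields the stated formula. The main obstacle is purely computational: correctly recognizing the complex quantity $\Re(\overline{c}\,a^2)$ hidden inside the real quadratic form $(u,\operatorname{adj}(B)\,u)$, and carefully tracking the $\tfrac{\alpha p}{2}$ scalars through the matrix inversion so that they cancel as claimed.
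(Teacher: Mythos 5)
Your proposal is correct and follows essentially the same route as the paper's proof: both reduce $\|g\|_{p,\alpha}^p$ to a two-dimensional real Gaussian integral, apply Lemma~\ref{l.Gauss.int} with the same matrix $\frac{\alpha p}{2}\begin{pmatrix}1-\Re(c)&\Im(c)\\\Im(c)&1+\Re(c)\end{pmatrix}$ and vector $\frac{\alpha p}{2}(\Re(a),-\Im(a))$, obtain the integrability dichotomy $|c|<1$ from positive definiteness, and identify $(v,A^{-1}v)$ with $\frac{|a|^2+\Re(\overline{c}a^2)}{1-|c|^2}$. The only cosmetic difference is that you verify positive definiteness via trace and determinant while the paper uses leading principal minors.
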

\begin{proof}
Define $g(z)=e^{\left(\alpha a z+\frac{\alpha}{2}cz^2\right)}$ for some constants $a, c\in\C$.  Define a matrix $A$ and a vector $v$ as
\[ A=\begin{bmatrix}1-\Re(c)&\Im(c)\\\Im(c)&1+\Re(c)\end{bmatrix}, \qquad v = \begin{bmatrix}\Re(a)\\-\Im(a)\end{bmatrix}. \]
Note that $A$ is positive definite $\iff$ $1-\Re(c)>0$ and $1-\Re(c)^2-\Im(c)^2>0$ $\iff$ $|c|^2<1$ $\iff$ $|c|<1$.  Now we have
\begin{equation*}
\|g\|^p_{p,\alpha}= \int_\C |e^{\left(\alpha a z+\frac{\alpha}{2}cz^2\right)}|^p\left(\frac{\alpha p}{2\pi}\right)e^{-\frac{\alpha p}{2}|z|^2}dz
= \left(\frac{\alpha p}{2\pi}\right)\int_{\R^{2}} e^{\left(-\left(x,\frac{\alpha p}{2}Ax\right)+2\left(\frac{\alpha p}{2}v,x\right)\right)}dx.
\end{equation*}
By Lemma \ref{l.Gauss.int}, the above integral converges if and only if $A$ is positive definite which holds if and only if $|c|<1$, proving the first part of the lemma.  Assuming $|c|<1$, we have
\begin{equation}
\|g\|^p_{p,\alpha}
= \left(\frac{\alpha p}{2\pi}\right)\int_{\R^{2}} e^{\left(-\left(x,\frac{\alpha p}{2}Ax\right)+2\left(\frac{\alpha p}{2}v,x\right)\right)}dx
=\frac{1}{\sqrt{1-|c|^2}}\exp\left(\frac{\alpha p}{2}v,A^{-1}v\right).\label{gaussianpnorma1}
\end{equation}
One can compute the inner product $(v,A^{-1}v)$ as
\begin{equation}
(v,A^{-1}v) = \frac{1}{1-|c|^2}[\Re(a)\,\,-\Im(a)]\begin{bmatrix}1+\Re(c)&-\Im(c)\\-\Im(c)&1-\Re(c)\end{bmatrix}\begin{bmatrix}\Re(a)\\-\Im(a)\end{bmatrix}
=\frac{|a|^2+\Re(\overline{c}a^2)}{1-|c|^2}\label{gaussianpnorma2}.
\end{equation}
Substituting \eqref{gaussianpnorma2} into \eqref{gaussianpnorma1} and taking $p^{th}$ roots gives the desired result.
\end{proof}

In light of the above lemma, we define a subset $\mathcal{G}_\alpha$ of $\mathcal{G}$ as the appropriate quadratic exponential functions
\begin{equation}\label{Galphadef}
\mathcal{G}_\alpha=\mathcal{G}\cap \mathcal{H}^n_{2,\alpha}=\left\{\exp\left(\alpha a z+\frac{\alpha}{2}c z^2\right): a, c\in\C, |c|< 1\right\}.
\end{equation}
We chose $p=2$ in $\mathcal{H}^n_{2,\alpha}$ above, but note that by Lemma \ref{gaussianpnorma} and the discussion that preceded it, $\mathcal{G}_\alpha=\mathcal{G}\cap \mathcal{H}^n_{p,\alpha}$ for any $1\leq p <\infty$.  We can now state the main theorem of this section.

\begin{theorem}\label{gaussianprop}
Let $\mathcal{G}_\alpha$ be defined as in \eqref{Galphadef} and let $1 < p<\infty$.  Then 
\begin{equation*}
\sup_{g,h\in\mathcal{G}_\alpha} \mathcal{R}_{p,\alpha}(g,h) = \sqrt{C_{p}}.
\end{equation*}
\end{theorem}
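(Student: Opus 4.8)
The plan is to reduce the four complex parameters defining $g,h\in\mathcal{G}_\alpha$ to two real radii and then solve an elementary two-variable optimization. Write $g(z)=\exp\!\left(\alpha a_1 z+\tfrac\alpha2 c_1 z^2\right)$ and $h(z)=\exp\!\left(\alpha a_2 z+\tfrac\alpha2 c_2 z^2\right)$ with $|c_1|,|c_2|<1$. First I would compute the pairing $\langle g,h\rangle_\alpha$ explicitly. Writing $z=x+iy$, the integrand $g\,\overline h\,e^{-\alpha|z|^2}$ is the exponential of a quadratic form in $(x,y)$ with a linear term, so Lemma \ref{l.Gauss.int} applies directly; after assembling the $2\times2$ coefficient matrix $A$ (whose determinant works out to $\alpha^2(1-c_1\bar c_2)$) and the linear vector $v$, one obtains
\[
\langle g,h\rangle_\alpha=\frac{1}{\sqrt{1-c_1\bar c_2}}\exp\!\left(\frac\alpha2\cdot\frac{\bar c_2 a_1^2+c_1\bar a_2^2+2a_1\bar a_2}{1-c_1\bar c_2}\right).
\]
Combined with the norm formula of Lemma \ref{gaussianpnorma} (applied in the $p$-norm for $g$ and the $p'$-norm for $h$), this yields a closed form for $\mathcal{R}_{p,\alpha}(g,h)$.

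The next step is to eliminate the linear parameters. Taking moduli, the closed form factors as $\mathcal{R}_{p,\alpha}(g,h)=\rho(c_1,c_2)\,e^{\frac\alpha2 B}$, where $\rho(c_1,c_2)=(1-|c_1|^2)^{\frac{1}{2p}}(1-|c_2|^2)^{\frac{1}{2p'}}\,|1-c_1\bar c_2|^{-1/2}$ and $B$ is a real quadratic form in $(a_1,a_2,\bar a_1,\bar a_2)$, homogeneous of degree two and vanishing at the origin. Because $\mathcal{R}_{p,\alpha}\le C_p$ for all admissible inputs (Hölder's inequality; cf.\ \eqref{e.sharp.Holder}), the form $B$ cannot take a positive value: otherwise rescaling $a_i\mapsto\lambda a_i$ and letting $\lambda\to\infty$ would force $\mathcal{R}_{p,\alpha}\to\infty$. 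Hence $\sup_{a_1,a_2}\mathcal{R}_{p,\alpha}(g,h)=\rho(c_1,c_2)$, attained at $a_1=a_2=0$. The phases of $c_1,c_2$ then enter only through $|1-c_1\bar c_2|$, which is minimized when $c_1\bar c_2\ge 0$; writing $r_i=|c_i|$ reduces the whole problem to
\[
\sup_{r_1,r_2\in[0,1)}\frac{(1-r_1^2)^{\frac{1}{2p}}(1-r_2^2)^{\frac{1}{2p'}}}{(1-r_1 r_2)^{1/2}}.
\]

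Squaring, it suffices to prove $(1-r_1^2)^{1/p}(1-r_2^2)^{1/p'}\le C_p(1-r_1 r_2)$. I would do this in two elementary steps. Weighted AM--GM with weights $\tfrac1p,\tfrac1{p'}$ (applied to $pA$ and $p'B$) gives $A^{1/p}B^{1/p'}\le p^{-1/p}{p'}^{-1/p'}(A+B)$ with $A=1-r_1^2$, $B=1-r_2^2$; and $(1-r_1^2)+(1-r_2^2)\le 2(1-r_1 r_2)$ is simply $(r_1-r_2)^2\ge 0$. Since $p^{-1/p}{p'}^{-1/p'}=C_p/2$ by \eqref{e.Cp}, these combine to the claimed bound, so $\mathcal{R}_{p,\alpha}\le\sqrt{C_p}$ throughout $\mathcal{G}_\alpha$. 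For sharpness I would take $a_1=a_2=0$ and real $c_1,c_2\to 1$ along the common equality locus of the two steps, namely $p(1-r_1^2)=p'(1-r_2^2)$ (for instance $1-r_1^2=p'\epsilon$, $1-r_2^2=p\epsilon$ with $\epsilon\to0^+$); a short expansion using $p+p'=pp'$ shows $1-r_1 r_2\sim\tfrac{pp'}{2}\epsilon$ and hence $\mathcal{R}_{p,\alpha}\to\sqrt{C_p}$, establishing that the supremum equals $\sqrt{C_p}$.

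The main obstacle is the opening step: the Gaussian pairing is mechanical but bookkeeping-heavy, and the entire argument hinges on extracting the factor $(1-c_1\bar c_2)^{-1/2}$ correctly, so that the phase reduction and the two-step inequality align to reproduce $C_p$ exactly. Once the closed form is in hand, the elimination of $a_1,a_2$ and the final optimization are routine.
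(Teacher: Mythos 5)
Your proof is correct, but it takes a genuinely different route from the paper's in the two places where the real work happens. The opening Gaussian computation and the phase reduction via the reverse triangle inequality coincide with the paper's Lemma \ref{gaussianpairinga} and the first step of its proof of Theorem \ref{gaussianprop}. After that you diverge. First, to eliminate the linear coefficients, the paper proves Lemma \ref{criticalpointlemma1}: a full critical-point analysis of the exponent (explicit critical point, Hessian computed entry-by-entry, negative-definiteness established via harmonicity and maximum-modulus arguments). You instead observe that the exponent $B$ is a real quadratic form, homogeneous of degree two in $(a_1,a_2)$, so if it were ever positive, scaling $a_i\mapsto\lambda a_i$ (which stays inside $\mathcal{G}_\alpha$) would violate the a priori bound $\mathcal{R}_{p,\alpha}\le C_p$ of \eqref{e.norm.equiv}/\eqref{e.sharp.Holder}; hence $B\le 0$ with equality at $a_1=a_2=0$. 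This is much shorter and is not circular, since the Hölder bound is established independently of Section 5; what it does not give, and the paper's lemma does, is the explicit maximizer \eqref{criticalpointeq2a}, which is extra information not needed for the theorem. Second, for the radial optimization the paper runs a one-variable calculus argument (unique critical point $y_x$, monotonicity of $g(y)$, continuous extension to $y=1$), whereas you prove the clean algebraic bound
\begin{equation*}
(1-r_1^2)^{1/p}(1-r_2^2)^{1/p'}\;\le\; p^{-1/p}{p'}^{-1/p'}\bigl((1-r_1^2)+(1-r_2^2)\bigr)\;\le\; C_p\,(1-r_1r_2)
\end{equation*}
by Young's inequality plus $(r_1-r_2)^2\ge 0$, and then exhibit the saturating sequence $1-r_1^2=p'\epsilon$, $1-r_2^2=p\epsilon$ (your asymptotic $1-r_1r_2\sim\tfrac{pp'}{2}\epsilon$ checks out, using $p+p'=pp'$). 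This is more elementary, makes the appearance of the constant $C_p=2p^{-1/p}{p'}^{-1/p'}$ transparent, and identifies exactly why the supremum is approached only in the degenerate limit $r_1,r_2\to1$ (the two equality conditions $p(1-r_1^2)=p'(1-r_2^2)$ and $r_1=r_2$ are incompatible for $p\neq 2$ away from the boundary), consistent with the paper's finding that the supremum is not attained.
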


The function $\mathcal{R}_{p,\a}$ is a ``Gaussian kernel'': it is defined on an $L^p\times L^{p'}$ space with a Gaussian weight.  It does not quite fit the conditions of \cite{Lieb} (which asserts that, in a similar setup, Gaussian kernels have only Gaussian maximizers), but given that landmark paper it is very natural to think that the maximum should occur on quadratic exponential functions in this case as well.  Thus, Theorem \ref{gaussianprop} provides more highly suggestive evidence that maximum really is $\sqrt{C_p}$, at least in the $n=1$ dimensional case.

We will break the proof of Theorem \ref{gaussianprop} into multiple lemmas.  We begin by computing the ratio $\mathcal{R}_{p,\alpha}(g,h)$ for functions $g,h\in\mathcal{G}_\alpha$:

\begin{lemma}\label{gaussianpairinga}
Let $g,h\in\mathcal{G}_\alpha$ be arbitrary with $g(z)=\exp\left(\alpha az+\frac{\alpha}{2}c z^2\right)$ and $h(z)=\exp\left(\alpha bz+\frac{\alpha}{2}d z^2\right)$.  Then
\begin{eqnarray}
\lefteqn{\mathcal{R}_{p,\alpha}(g,h)=}\nonumber\\
& & \sqrt{\frac{(1-|c|^2)^{1/p}(1-|d|^2)^{1/p'}}{|1-c\overline{d}|}}\exp\left(\frac{\alpha}{2}\Re\left[\frac{2a\overline{b}+a^2\overline{d}+\overline{b^2}c}{1-c\overline{d}}-\frac{|a|^2+a^2\overline{c}}{1-|c|^2}-\frac{|b|^2+\overline{b^2}d}{1-|d|^2}\right]\right).\label{gaussianparingaeq1}
\end{eqnarray}
\end{lemma}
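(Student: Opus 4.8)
The plan is to treat the numerator and denominator of $\mathcal{R}_{p,\alpha}(g,h)$ separately. The denominator is immediate from Lemma \ref{gaussianpnorma}: applying that lemma with exponent parameters $(a,c)$ gives $\|g\|_{p,\alpha}$ and with $(b,d)$ gives $\|h\|_{p',\alpha}$, so that
\[ \|g\|_{p,\alpha}\|h\|_{p',\alpha} = \frac{\exp\left(\frac{\alpha}{2}\left[\frac{|a|^2+\Re(\overline{c}a^2)}{1-|c|^2}+\frac{|b|^2+\Re(\overline{d}b^2)}{1-|d|^2}\right]\right)}{(1-|c|^2)^{1/2p}(1-|d|^2)^{1/2p'}}. \]
Since $1-|c|^2$ and $1-|d|^2$ are real and positive, the bracketed exponent is already $\Re$ of the last two terms appearing in \eqref{gaussianparingaeq1}, so the only substantive work is the numerator $\langle g,h\rangle_\alpha$.

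To compute $\langle g,h\rangle_\alpha = \frac{\alpha}{\pi}\int_{\C} g(z)\overline{h(z)}e^{-\alpha|z|^2}\,\lambda(dz)$, I would first note the integrand is absolutely integrable: both $g,h\in\mathcal{G}_\alpha\subset\mathcal{H}^1_{2,\alpha}\subset L^2(\gamma^1_\alpha)$, so the pairing is a finite $L^2(\gamma^1_\alpha)$ inner product by Cauchy--Schwarz. Writing $z=x+iy$ and expanding
\[ g(z)\overline{h(z)}e^{-\alpha|z|^2} = \exp\left(\alpha a z + \tfrac{\alpha}{2}cz^2 + \alpha\overline{b}\,\overline{z} + \tfrac{\alpha}{2}\overline{d}\,\overline{z}^2 - \alpha|z|^2\right), \]
I would collect the resulting real quadratic and linear forms in $u=(x,y)$ to put the integral in the form $\frac{\alpha}{\pi}\int_{\R^2}e^{-(u,Au)+2(v,u)}\,du$ with symmetric complex matrix and vector
\[ A = \frac{\alpha}{2}\begin{bmatrix} 2-c-\overline{d} & -i(c-\overline{d}) \\ -i(c-\overline{d}) & 2+c+\overline{d}\end{bmatrix}, \qquad v = \frac{\alpha}{2}\begin{bmatrix} a+\overline{b} \\ i(a-\overline{b})\end{bmatrix}. \]
Because the integral converges absolutely, $\Re(A)$ is automatically positive definite, so Lemma \ref{l.Gauss.int} applies and yields $\langle g,h\rangle_\alpha = \frac{\alpha}{\sqrt{\det A}}\,e^{(v,A^{-1}v)}$.

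The two computations that carry the weight are $\det A$ and $(v,A^{-1}v)$. Using $(c+\overline{d})^2-(c-\overline{d})^2 = 4c\overline{d}$, the determinant collapses to $\det A = \alpha^2(1-c\overline{d})$, so the prefactor $\alpha/\sqrt{\det A}$ becomes $(1-c\overline{d})^{-1/2}$. For the quadratic form, factoring $\alpha$ out of both $A$ and $v$ reduces $(v,A^{-1}v)$ to $\alpha/(1-c\overline{d})$ times a polynomial in $a,\overline{b},c,\overline{d}$; grouping the symmetric diagonal contribution against the cross term, the unwanted $a^2$ and $\overline{b}^2$ pieces cancel in pairs, leaving
\[ (v,A^{-1}v) = \frac{\alpha}{2}\cdot\frac{2a\overline{b}+a^2\overline{d}+\overline{b}^2 c}{1-c\overline{d}}. \]
Finally I would take moduli, using $|e^{(v,A^{-1}v)}| = e^{\Re(v,A^{-1}v)}$ and $|(1-c\overline{d})^{-1/2}| = |1-c\overline{d}|^{-1/2}$ (the branch of the square root being irrelevant after taking absolute value), and divide by the denominator above. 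The prefactors combine to the stated radical $\sqrt{(1-|c|^2)^{1/p}(1-|d|^2)^{1/p'}/|1-c\overline{d}|}$, and the three exponent terms combine under a single $\Re$ to the bracket in \eqref{gaussianparingaeq1} upon identifying $\overline{b^2}=\overline{b}^2$.

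The main obstacle is purely the bookkeeping in the second carried computation: correctly reading off $A$ and $v$ from the real and imaginary parts of $cz^2$, $\overline{d}\,\overline{z}^2$, $az$, and $\overline{b}\,\overline{z}$, and then simplifying $(v,A^{-1}v)$ so that the spurious $a^2,\overline{b}^2$ terms cancel to leave the clean numerator $2a\overline{b}+a^2\overline{d}+\overline{b}^2 c$. A secondary point deserving one sentence of justification is that taking the modulus legitimately places all three exponent terms under a single $\Re$; this works precisely because the denominators $1-|c|^2$ and $1-|d|^2$ are real, while only the cross term $1-c\overline{d}$ is genuinely complex.
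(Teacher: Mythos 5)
Your proposal is correct and follows essentially the same route as the paper's proof: the norms come from Lemma \ref{gaussianpnorma}, and the pairing $\langle g,h\rangle_\alpha$ is computed via Lemma \ref{l.Gauss.int} with exactly the matrix and vector the paper uses (the paper writes $A=\frac{\alpha}{2}B$, $v=\frac{\alpha}{2}w$ and keeps the $\frac{\alpha}{2}$ factors outside), yielding the same determinant $\det A=\alpha^2(1-c\overline{d})$ and quadratic form $(v,A^{-1}v)=\frac{\alpha}{2}\cdot\frac{2a\overline{b}+a^2\overline{d}+\overline{b}^2c}{1-c\overline{d}}$. The only cosmetic difference is that you deduce positive definiteness of $\Re(A)$ from the absolute convergence of the pairing via the ``if and only if'' in Lemma \ref{l.Gauss.int}, whereas the paper checks it directly from $|c|,|d|<1$; both are valid.
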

\begin{proof}
We first compute $|\langle g,h\rangle_\alpha|$.  This computation here is very similar although a bit more complicated to the proof of Lemma \ref{gaussianpnorma}.  Define a matrix $B$ and a vector $w$ as
\begin{equation*}
B=\begin{bmatrix}2-(c+\overline{d})& -ic+i\overline{d}\\-ic+i\overline{d}&2+(c+\overline{d})\end{bmatrix}, \qquad w=\begin{bmatrix} a+\overline{b} \\ i(a-\overline{b})\end{bmatrix}.
\end{equation*}
As $|c|, |d|<1$ it is easy to see that $B$ has a positive definite real part.  One can compute that
\begin{equation}
\langle g,h\rangle_{\alpha}
= \frac{\alpha}{\pi}\int_{\R^2} e^{-(x,\frac{\alpha}{2}Bx)+2\left(\frac{\alpha}{2}w,x\right)}dx\nonumber\\
= \frac{2}{\sqrt{\det(B)}}= \frac{1}{\sqrt{1-c\overline{d}}}e^{\frac{\alpha}{2}\left(w,B^{-1} w\right)}\label{gaussianpairinga1}.
\end{equation}
We compute $\left(w,B^{-1} w\right)$ as
\begin{equation}
\left(w,B^{-1} w\right)
= \frac{8a\overline{b}+4a^2\overline{d}+4\overline{b^2}c}{4(1-c\overline{d})}= \frac{2a\overline{b}+a^2\overline{d}+\overline{b^2}c}{1-c\overline{d}}.\label{gaussianpairinga2}
\end{equation}
Combining \eqref{gaussianpairinga1} and \eqref{gaussianpairinga2} with Lemma \ref{gaussianpnorma} we have
\begin{eqnarray*}
\mathcal{R}_{p,\alpha}(g,h)&=&\frac{|\langle g, h\rangle_\alpha|}{\|g\|_{p,\alpha}\|h\|_{p',\alpha}}\\
&=&\sqrt{\frac{(1-|c|^2)^{1/p}(1-|d|^2)^{1/p'}}{|1-c\overline{d}|}}\exp\left(\frac{\alpha}{2}\Re\left[\frac{2a\overline{b}+a^2\overline{d}+\overline{b^2}c}{1-c\overline{d}}-\frac{|a|^2+a^2\overline{c}}{1-|c|^2}-\frac{|b|^2+\overline{b^2}d}{1-|d|^2}\right]\right),
\end{eqnarray*}
completing the proof.
\end{proof}
The next step is to understand the exponential term in \eqref{gaussianparingaeq1} and to show that it is never greater than $1$.  To that end,  we have the following lemma:
\begin{lemma}\label{criticalpointlemma1}
Fix $b, c, d\in\C$ where $|c|<1$ and $|d|<1$.  Define the function $f:\R^2\to\R$ as
\begin{equation}
f(x,y) = \Re\left[\frac{2(x+iy)\overline{b}+(x+iy)^2\overline{d}+\overline{b^2}c}{1-c\overline{d}}-\frac{|x+iy|^2+(x+iy)^2\overline{c}}{1-|c|^2}-\frac{|b|^2+\overline{b^2}d}{1-|d|^2}\right].
\end{equation}
Then $f$ has a unique critical point $(x_0,y_0)$ that satisfies
\begin{equation}\label{criticalpointeq2a}
x_0+iy_0 = \frac{\overline{b}(d-c)+b(1-c\overline{d})}{1-|d|^2}.
\end{equation}
Furthermore, $\sup_{(x,y)\in\R^2} f(x,y)=f(x_0,y_0)=0$.
\end{lemma}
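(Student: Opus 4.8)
The plan is to regard $f$ as a real-valued function of the single complex variable $w = x+iy$ and to exploit that, apart from one genuinely non-holomorphic term, it is the real part of a holomorphic quadratic. Writing $A = 1-c\overline{d}$ and collecting the terms of the bracket with $a$ replaced by $w$, one gets
\[ f(w) = \Re\big[\beta w + \eta w^2\big] - \mu\,|w|^2 + K, \]
where $\beta = \tfrac{2\overline{b}}{A}$, $\mu = \tfrac{1}{1-|c|^2}>0$, the constant $K = \Re\big[\tfrac{\overline{b^2}c}{A} - \tfrac{|b|^2+\overline{b^2}d}{1-|d|^2}\big]$ is independent of $w$, and $\eta = \tfrac{\overline{d}}{A}-\tfrac{\overline{c}}{1-|c|^2}$. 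A first simplification is that the two pieces of $\eta$ combine cleanly: $\eta = \tfrac{\overline{d}-\overline{c}}{A(1-|c|^2)}$. I would then use Wirtinger calculus, noting that since $f$ is real-valued its real critical points are exactly the zeros of $\partial f/\partial\overline{w}$.

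The structural heart of the argument is the Schwarz--Pick-type identity
\[ |1-c\overline{d}|^2 - |c-d|^2 = (1-|c|^2)(1-|d|^2), \]
which is strictly positive precisely because $|c|,|d|<1$. Combined with the simplified form of $\eta$ it gives $\mu^2 - |\eta|^2 = \tfrac{1-|d|^2}{|A|^2(1-|c|^2)} > 0$. Since the Hessian of $f$ has eigenvalues $-2\mu \pm 2|\eta|$, this shows $f$ is strictly concave, so it possesses a unique critical point which is automatically its global maximum; this settles existence, uniqueness, and the attainment of the supremum in one stroke.

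To pin down the critical point I would solve $\partial f/\partial\overline{w}=0$, i.e. $\mu w = \tfrac12\overline{\beta} + \overline{\eta}\,\overline{w}$, together with its conjugate — a linear system in $(w,\overline{w})$ with determinant $\mu^2-|\eta|^2$. This yields $w_0 = \tfrac{\mu\overline{\beta}+\overline{\eta}\beta}{2(\mu^2-|\eta|^2)}$, and substituting the values above and clearing denominators (the $|A|^2(1-|c|^2)$ factors cancel against $\mu^2-|\eta|^2$) collapses it to the claimed $w_0 = \tfrac{\overline{b}(d-c)+b(1-c\overline{d})}{1-|d|^2}$. For the value, rather than substitute blindly I would use the Euler-type relation for a quadratic at its critical point: multiplying the critical equation by $\overline{w_0}$ gives $\mu|w_0|^2 = \Re\big[\tfrac12\beta w_0 + \eta w_0^2\big]$, and feeding this into $f(w_0)$ reduces it to $f(w_0) = \tfrac12\Re[\beta w_0] + K$. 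Writing $E=1-|d|^2$, a short computation gives $\tfrac12\beta w_0 = \tfrac{\overline{b}^2(d-c)}{AE}+\tfrac{|b|^2}{E}$, whose $\tfrac{|b|^2}{E}$ term cancels the like term in $K$; the survivor is $\Re\big[\overline{b}^2\big(\tfrac{d-c}{AE}+\tfrac{c}{A}-\tfrac{d}{E}\big)\big]$, and the inner parenthesis vanishes identically (its numerator over $AE$ is $d-c+c(1-|d|^2)-d(1-c\overline{d})=0$). Hence $f(w_0)=0$.

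I expect the only real obstacle to be bookkeeping: both the simplification of $w_0$ and the vanishing of $f(w_0)$ look forbidding before cancellation. The two devices that render them mechanical are the Schwarz--Pick identity above, which governs every denominator cancellation and supplies concavity, and the Euler relation for the critical value, which spares one from expanding $\eta w_0^2$ and $\mu|w_0|^2$ separately. With these in hand the remaining computations are routine.
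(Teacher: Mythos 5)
Your proof is correct, and while its skeleton (unique critical point located via $\partial f/\partial\overline{w}=0$, constant negative-definite Hessian, hence global maximum with value $0$) matches the paper's, your treatment of the key negative-definiteness step is genuinely different and cleaner. The paper proves the Hessian is negative definite by complex analysis: its $(1,1)$ entry equals $4g(d)$ for the harmonic function $g(z)=\Re\bigl(\frac{z}{1-\overline{c}z}-\frac{1+c}{1-|c|^2}\bigr)$, so the maximum principle plus an explicit determination of the boundary critical points of $\theta\mapsto g(e^{i\theta})$ gives $h_{11}<0$, and a separate maximum-modulus argument gives $\det H>0$. You instead put $f$ in the normal form $\Re[\beta w+\eta w^2]-\mu|w|^2+K$, read off the Hessian eigenvalues $-2\mu\pm 2|\eta|$, and deduce $\mu>|\eta|$ from the identity $|1-c\overline{d}|^2-|c-d|^2=(1-|c|^2)(1-|d|^2)$: purely algebraic, and the same identity then collapses the denominators in $w_0=\frac{\mu\overline{\beta}+\overline{\eta}\beta}{2(\mu^2-|\eta|^2)}$ to give \eqref{criticalpointeq2a}, so it does double duty. (One small point of rigor: strict concavity alone does not guarantee that a critical point exists; but your explicit solution of the linear system in $(w,\overline{w})$, whose determinant $\mu^2-|\eta|^2$ is nonzero, settles existence and uniqueness, so nothing is missing.) Your Euler-type relation $\mu|w_0|^2=\Re[\tfrac12\beta w_0+\eta w_0^2]$, which reduces $f(w_0)$ to $\tfrac12\Re[\beta w_0]+K$, is also tidier bookkeeping than the paper's device of conjugating the critical equation, multiplying by $x_0+iy_0$, multiplying the companion identity by $\overline{b}$, and adding, though the two computations are essentially equivalent; both yield $f(w_0)=0$ by the same final cancellation. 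Incidentally, your eigenvalue computation shows that the prefactor $4$ in the paper's Hessian \eqref{hessianmatrix} should be a $2$; this is harmless since only signs are used there. What the paper's route buys is independence from any normal-form decomposition; what yours buys is brevity, elementarity, and making the source of positivity (the Schwarz--Pick identity) explicit.
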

\begin{proof}
We will prove this lemma using calculus. First we compute the partial derivatives of $f$:
\begin{eqnarray}
\frac{\partial f}{\partial x} &=& \Re\left[\frac{2\overline{b}+2(x+iy)\overline{d}}{1-c\overline{d}}-\frac{2(x+iy)\overline{c}}{1-|c|^2}\right]-\frac{2x}{1-|c|^2},\nonumber\\
\frac{\partial f}{\partial y} &=& \Re\left[\frac{2i\overline{b}+2i(x+iy)\overline{d}}{1-c\overline{d}}-\frac{2i(x+iy)\overline{c}}{1-|c|^2}\right]-\frac{2y}{1-|c|^2}\nonumber\\
& &= -\Im\left[\frac{2\overline{b}+2(x+iy)\overline{d}}{1-c\overline{d}}-\frac{2(x+iy)\overline{c}}{1-|c|^2}\right]-\frac{2y}{1-|c|^2}\nonumber
\end{eqnarray}
We can rewrite the above as
\begin{equation}\label{criticalpointeq0a}
\frac{1}{2}\left(\frac{\partial f}{\partial x}+i\frac{\partial f}{\partial y}\right)= \frac{b+(x-iy)d}{1-\overline{c}d}-\frac{(x-iy)c+(x+iy)}{1-|c|^2}.
\end{equation}
Suppose that $(x_0,y_0)$ is a critical point.  We then can set the above equation to $0$ and solve for $b$, yielding
\begin{equation}\label{criticalpointeq1a}
b = \frac{(x_0-iy_0)(c-d)+(x_0+iy_0)(1-\overline{c}d)}{1-|c|^2}.
\end{equation}
Using \eqref{criticalpointeq1a} one can verify that \eqref{criticalpointeq2a} holds.
Furthermore, one can rewrite \eqref{criticalpointeq2a} as
\begin{equation}\label{criticalpointeq3a}
0= \frac{(x_0+iy_0)+\overline{b}c}{1-c\overline{d}}-\frac{b+\overline{b}d}{1-|d|^2}.
\end{equation}
Setting \eqref{criticalpointeq0a} equal to $0$, conjugating, and multiplying the result by $x_0+iy_0$, as well as multiplying \eqref{criticalpointeq3a} by $\overline{b}$ yield the two equations
\begin{eqnarray*}
0 &=& \frac{(x_0+iy_0)\overline{b}+(x_0+iy_0)^2\overline{d}}{1-c\overline{d}}-\frac{(x_0+iy_0)^2\overline{c}+|x_0+iy_0|^2}{1-|c|^2},\\
0 &=& \frac{(x_0+iy_0)\overline{b}+\overline{b}^2c}{1-c\overline{d}}-\frac{|b|^2+\overline{b}^2d}{1-|d|^2}.
\end{eqnarray*}
Adding the two equations above yields $f(x_0,y_0)=0$.  To complete the proof, we need to show that $f$ takes on a global maximum at $(x_0,y_0)$. To that end, we compute the Hessian of $f(x,y)$.  First note that from \eqref{criticalpointeq0a} we have
\begin{eqnarray*}
\frac{1}{2}\frac{\partial}{\partial x}\left(\frac{\partial f}{\partial x}+i\frac{\partial f}{\partial y}\right)&=& \frac{d}{1-\overline{c}d}-\frac{1+c}{1-|c|^2}\\
\frac{1}{2}\frac{\partial}{\partial y}\left(\frac{\partial f}{\partial x}+i\frac{\partial f}{\partial y}\right)&=& i\left(\frac{-d}{1-\overline{c}d}-\frac{1-c}{1-|c|^2}\right)
\end{eqnarray*}
Thus, the Hessian matrix $H$ of $f$ at any point $(x,y)$ is given by
\begin{equation}\label{hessianmatrix}
H = 4\begin{bmatrix}\Re\left(\frac{d}{1-\overline{c}d}-\frac{1+c}{1-|c|^2}\right) & \Im\left(\frac{d}{1-\overline{c}d}-\frac{c}{1-|c|^2}\right)\\\Im\left(\frac{d}{1-\overline{c}d}-\frac{c}{1-|c|^2}\right)& \Re\left(\frac{-d}{1-\overline{c}d}-\frac{1-c}{1-|c|^2}\right) \end{bmatrix}
\end{equation}
Thus, to show that $f(x_0,y_0)$ is a global maximum, it suffices to show that $H$ is negative definite.  Let $h_{11}$ be the $(1,1)$ entry of $H$, and define a function $g:\mathbb{D}\to\R$ (where $\mathbb{D}=\{z\in\C: |z|\leq 1\}$) as
\begin{equation*}
g(z) = \Re\left(\frac{z}{1-\overline{c}z}-\frac{1+c}{1-|c|^2}\right).
\end{equation*}
Note that $h_{11}=4g(d)$.  As $g$ is the real part of a holomorphic function, it is harmonic and thus takes its maximum and minimum values on $\partial\mathbb{D}$, that is, the unit circle.  By assumption $d$ is in the interior of $\mathbb{D}$, so we must have
\begin{equation}\label{hessianeq1}
\frac{1}{4}h_{11}=g(d) < \max_{\theta\in[-\pi,\pi]} g(e^{i\theta}).
\end{equation}
One can compute that
$$\frac{d}{d\theta} g(e^{i\theta})=\frac{d}{d\theta}\Re\left(\frac{1}{e^{-i\theta}-\overline{c}}-\frac{1+c}{1-|c|^2}\right)=\Re\left(i\frac{e^{-i\theta}}{(e^{-i\theta}-\overline{c})^2}\right)=\Im\left(\frac{e^{-i\theta}}{(e^{-i\theta}-\overline{c})^2}\right).$$
Thus, we have a critical point of $g(e^{i\theta})$ exactly when $\Im\left(\frac{e^{-i\theta}}{(e^{-i\theta}-\overline{c})^2}\right)=0$, which is equivalent to
\begin{equation*}
\frac{e^{-i\theta}}{(e^{-i\theta}-\overline{c})^2} = \pm \left|\frac{e^{-i\theta}}{(e^{-i\theta}-\overline{c})^2}\right|.
\end{equation*}
Using the fact that $\left|\frac{e^{-i\theta}}{(e^{-i\theta}-\overline{c})^2}\right|=\frac{1}{(e^{-i\theta}-\overline{c})(e^{i\theta}-c)}$ we can solve the above equation to find two critical points: $\frac{c+1}{1+\overline{c}}$ and $\frac{c-1}{1-\overline{c}}$. 
  As the circle $\partial \mathbb{D}$ is closed, the maximum of $g$ must occur on one of these points and the minimum on the other.  A straightforward calculation yields
$$g\left(\frac{c+1}{1+\overline{c}}\right)=0,\, g\left(\frac{c-1}{1-\overline{c}}\right)=-\frac{2}{1-|c|^2}.$$
Hence, from \eqref{hessianeq1} we have
\begin{equation}\label{hessianeq2}
h_{11}=4g(d) < 4\max_{\theta\in[-\pi,\pi]} g(e^{i\theta}) = 0.
\end{equation}
Next we show that $\det(H)$ is positive.  To that end, one can compute that
\begin{equation*}
\frac{1}{4}\det(H) 
= \left(\frac{1}{1-|c|^2}\right)^2-\left|\frac{d}{1-\overline{c}d}-\frac{c}{1-|c|^2}\right|^2
\end{equation*}
As $|d|<1$, by the Maximum Modulus Principle, we thus have
\begin{equation}
\frac{1}{4}\det(H) > \left(\frac{1}{1-|c|^2}\right)^2-\max_{\theta\in[-\pi,\pi)}\left|\frac{e^{i\theta}}{1-\overline{c}e^{i\theta}}-\frac{c}{1-|c|^2}\right|^2\label{hessianeq4}.
\end{equation}
Note that for any real number $\theta$, we have
\begin{equation*}
\left|\frac{e^{i\theta}}{1-\overline{c}e^{i\theta}}-\frac{c}{1-|c|^2}\right|^2 
=\left(\frac{1}{1-|c|^2}\right)^2\left|\frac{e^{i\theta}-c}{e^{-i\theta}-\overline{c}}\right|^2=\left(\frac{1}{1-|c|^2}\right)^2.
\end{equation*}
Plugging the above into \eqref{hessianeq4} yields
\begin{equation}\label{hessianeq5}
\frac{1}{4}\det(H) > \left(\frac{1}{1-|c|^2}\right)^2-\max_{\theta\in[-\pi,\pi)}\left|\frac{e^{i\theta}}{1-\overline{c}e^{i\theta}}-\frac{c}{1-|c|^2}\right|^2 = 0.
\end{equation}
By \eqref{hessianeq2} and \eqref{hessianeq5}, the matrix $H$ is negative definite, as desired.
\end{proof}
Now we are in a position to prove Theorem \ref{gaussianprop}.
\begin{proof}[Proof of Theorem \ref{gaussianprop}]
By Lemmas \ref{gaussianpairinga} and \ref{criticalpointlemma1}, we have
\begin{equation}
\sup_{g,h\in\mathcal{G}_\alpha} \mathcal{R}_{p,\alpha}(g,h) = \sup_{g,h\in\tilde{\mathcal{G}_\alpha}} \mathcal{R}_{p,\alpha}(g,h) = \sup_{c,d\in \C, |c|,|d|<1} \sqrt{\frac{(1-|c|^2)^{1/p}(1-|d|^2)^{1/p'}}{|1-c\overline{d}|}}.
\end{equation}
Note that, by the reverse triangle inequality, for $|c|,|d|<1$ we have
$$\sqrt{\frac{(1-|c|^2)^{1/p}(1-|d|^2)^{1/p'}}{|1-c\overline{d}|}}\leq \sqrt{\frac{(1-|c|^2)^{1/p}(1-|d|^2)^{1/p'}}{1-|c||d|}}.$$
Hence, we have
\begin{equation}\label{gaussianpropeq1}
\sup_{g,h\in\mathcal{G}_\alpha} \mathcal{R}_{p,\alpha}(g,h) = \sup_{x,y\in [0,1)} \sqrt{\frac{(1-x^2)^{1/p}(1-y^2)^{1/p'}}{1-xy}}.
\end{equation}
Fix $x\in [0,1)$ and consider the function $h_x(y):[0,1)\to[0,\infty)$ as
\begin{equation}
h_x(y) = \frac{(1-x^2)^{1/p}(1-y^2)^{1/p'}}{1-xy}.
\end{equation}
Then $h_x$ is differentiable and 
$$h_x'(y)
\frac{(1-x^2)^{1/p}(1-y^2)^{1/p'-1}}{p'2y^2(1-xy)^2}\left(\frac{2y^2+p'(1-y^2)}{2y^2}x-1\right).$$
Thus, $y_0$ is a critical point of $h_x$ if and only if
\begin{equation}\label{ycritical}
x=x(y)=\frac{2y}{2y^2+p'(1-y^2)}=\frac{2y}{(2-p')y^2+p'}.
\end{equation}
It is easy to see that $x(0)=0$, $x(1)=1$, and that $x'(y)>0$ (the fact that $p'>1$ is important here).  Thus, $x(y)$ is an invertible function that maps $[0,1]$ onto $[0,1]$.  Hence, $h_x$ has exactly one critical point $y_x$ and $y_x$ satisfies \eqref{ycritical}.  If $y'<y_x$, then $x(y')<x$ which implies $ h_x'(y')>0$.
Similarly, if $y'>y_x$, then $h_x'(y')<0$, proving that $y_x$ is a local maximum for $h_x$.  Since $y_x$ is a unique critical point, it must be a global maximum for $h_x$.  Also, by the invertibility of $x(y)$, for every $y\in[0,1)$, there exists a unique $x\in [0,1)$ for which $y=y_x$ (namely, $x(y)$).  Thus, we can rewrite \eqref{gaussianpropeq1} as
\begin{equation}\label{gaussianpropeq2}
\sup_{g,h\in\mathcal{G}_\alpha} \mathcal{R}_{p,\alpha}(g,h) = \sup_{x\in [0,1)} \sqrt{h_x(y_x)}=\sup_{y\in [0,1)} \sqrt{h_{x(y)}(y)}.
\end{equation} 
A computation shows that 
\begin{equation}\label{gaussianpropeq3}
h_{x(y)}(y) = \frac{1}{p'}((p')^2-(2-p')^2y^2)^{1/p}((2-p')y^2+p')^{1-2/p}.
\end{equation}
Note that, while $h_{x(y)}(y)$ is not formally defined at $y=1$, it can be continuously extended to 1.  In fact, define $g:[0,1]\to[0,\infty)$ as
$$g(y) =\frac{1}{p'}((p')^2-(2-p')^2y^2)^{1/p}((2-p')y^2+p')^{1-2/p}, $$
and note that $g$ is continuous on $[0,1]$ and differentiable on $(0,1)$.  For $y\in (0,1)$,
\begin{equation*}
g'(y) = \frac{2y}{p'}(2-p')^2((p')^2-(2-p')^2y^2)^{(p'-1)/p'-1}((2-p')y^2+p')^{(2-p')/p'}>0,
\end{equation*}
proving that $g$ is increasing on its domain.  Hence, combining equations \eqref{gaussianpropeq2} and \eqref{gaussianpropeq3} we have  
\begin{eqnarray*}
\sup_{g,h\in\mathcal{G}_\alpha} \mathcal{R}_{p,\alpha}(g,h) &=& \sup_{y\in [0,1)} \sqrt{h_{x(y)}(y)} = \sup_{y\in [0,1]} \sqrt{g(y)} = g(1)\\
&=& \sqrt{\frac{((p')^2-(2-p')^2)^{1/p}((2-p')+p')^{1-2/p}}{p'}}=\sqrt{C_p}
\end{eqnarray*}
completing the proof.
\end{proof}
\noindent{\bf Acknowledgement}. We would like to thank the referee of an earlier version of this manuscript for important insights: in particular the idea behind Theorem \ref{lemma3}.
\bibliography{Fock-kernel-ref}
\bibliographystyle{acm}

\end{document}